  \newtheorem{thm}{Theorem}[section]
  \newtheorem{defn}[thm]{Definition}
  \newtheorem{prop}[thm]{Proposition}
  \newtheorem{lem}[thm]{Lemma}
  \newtheorem{remark}[thm]{Remark}
  \newtheorem*{assumption}{Assumption I}
 \numberwithin{equation}{section}
\begin{document}
 \title{\textbf{Edwards-Wilkinson Fluctuations in the Howitt-Warren Flows}}
 
 \author{Jinjiong Yu\footnote{Department of Mathematics, National University of Singapore, 10 Lower Kent Ridge Road, 119076 Singapore.
     E-mail: yujinjiong@nus.edu.sg}}
 \maketitle

 \begin{abstract}
 We study current fluctuations in a one-dimensional interacting particle system known as the dual smoothing process that is dual to random motions in a Howitt-Warren flow. The Howitt-Warren flow can be regarded as the transition kernels of a random motion in a continuous space-time random environment. It turns out that the current fluctuations of the dual smoothing process fall in the Edwards-Wilkinson universality class, where the fluctuations occur on the scale $t^{1/4}$ and the limit is a universal Gaussian process. Along the way, we prove a quenched invariance principle for a random motion in the Howitt-Warren flow. Meanwhile, the centered quenched mean process of the random motion also converges on the scale $t^{1/4}$, where the limit is another universal Gaussian process.\\
 \\
 AMS 2000 subject classification. $60K35$, $60K37$, $60F17$
 \end{abstract}

 \begin{section}{Introduction}
 \begin{subsection}{Overview} 
 In the review article \cite{T.Sepp.2010}, Sepp\"al{\"a}inen discussed the processes of particle currents in several dynamical stochastic systems of particles on the one-dimensional integer lattice. It turns out that for independent random walks, independent random walks in an i.i.d space random environment, and the random average process (RAP), there is a universal limit for the current fluctuations on the scale $n^{1/4}$, which is a certain family of self-similar Gaussian processes. These three models all belong to the so-called Edwards-Wilkinson (EW) universality class. Two more recent examples in the EW class are one-dimensional Hammersley's harness process \cite{seppalainen2015hammersley} and the Atlas model \cite{dembo2015equilibrium}. In the EW class the limiting current fluctuations are described by the linear stochastic heat equation $Z_t=\upsilon Z_{xx}+\dot{W}$ where $\dot{W}$ is space-time white noise and $\upsilon$ is a non-zero parameter. In contrast, asymmetric simple exclusion process and a class of totally asymmetric zero range processes have nontrivial current fluctuations on the scale $n^{1/3}$, and the Tracy-Widom distributions are the universal limits. These two models belong to the Kardar-Parisi-Zhang (KPZ) universality class. More discussions about EW and KPZ universality classes and their relations can be found in \cite{T.Sepp.2010} and \cite{corwin2012kardar}. However, all the models that were shown to be in the EW universality up to now are discrete models defined on $\mathbb{Z}$. The motivation of this paper is to present a model in continuous space and time that also falls in the EW class.
 
 Recently, in \cite{le2004flows} Le Jan and Raimond introduced the so-called stochastic flow of kernels, which is a collection of random probability kernels. Heuristically, a stochastic flow of kernels can be interpreted as the transition kernels of a Markov process in a space-time random environment, where restrictions of the environment to disjoint space-time regions are independent and the law of the environment satisfies translation-invariance in space and time. Given the environment, one can sample $n$ independent Markov processes (random motions) and then average over the environment. This leads to a Markov process known as the $n$-point motion of the flow and their joint law satisfies a natural consistency condition: the marginal distribution of any $k$ components of an $n$-point motion is necessarily a $k$-point motion. The main result of Le Jan and Raimond \cite{le2004flows} is that any family of Feller processes that is consistent in this way gives rise to a unique stochastic flow of kernels. Using martingale problems, Howitt and Warren later constructed a class of consistent Feller processes on $\mathbb{R}$ which are Brownian motions with sticky interaction when they meet. Thus by the fundamental result of Le Jan and Raimond, this class of Feller process determines the unique stochastic flow of kernels which is now called the Howitt-Warren flow. In \cite{E.Sche.R.SunJ.Swart2014}, Schertzer, Sun and Swart showed that the Howitt-Warren flows can be realized as the transition kernels of a random motion in a space-time environment, constructed explicitly from the Brownian web and Brownian net. Thus the heuristic interpretation above naturally becomes rigorous.

 Dual smoothing process dual to Howitt-Warren flows, which is a function-valued process, was also introduced in \cite{E.Sche.R.SunJ.Swart2014}. As a continuum space-time analogue of RAP, dual smoothing process can be thought of as the evolution of the interface height function in a growth model as well. In one dimension, conservative interacting particle systems can always be equivalently formulated as interface models. Here the connection goes by regarding the gradient of the interface height function as a measure governing the distribution of the particles. The movement of particle currents can then be viewed as deposition or removal of particles from the growing interface. With such an equivalent formulation, the current process maps directly to the height function. We will show that on the scale $t^{1/4}$, the fluctuations of the height function (dual smoothing process), which is the first continuum space-time model shown to be in the EW class, converges weakly to a universal Gaussian process. Along the way, we will show that for random motions in the Howitt-Warren flows, the process of the centered quenched means, indexed by space and time, converges to a Gaussian process after rescaling by $t^{-1/4}$. Moreover, we will prove a quenched invariance principle for random motion in the Howitt-Warren flows, which is of independent interest. 
  \end{subsection}
 
  \begin{subsection}{Stochastic flows of kernels and Howitt-Warren flows}
  In this subsection, we recall the notion of a stochastic flow of kernels and the characterization of Howitt-Warren flows. We then state a quenched invariance principle for the random motion in a Howitt-Warren flow, as well as the first convergence theorem.
  
  We first give the definition of a stochastic flow of kernels as introduced in \cite{le2004flows}. Given a Polish space $E$, let $\mathcal{B}(E)$ be the Borel $\sigma$-field of $E$ and $\mathcal{M}_1(E)$ be the space of probability measures on $E$ equipped with the topology of weak convergence and the associated Borel $\sigma$-field. A random probability kernel on $E$ is a measurable function $K: \Omega\times E\times \mathcal{B}(E) \rightarrow\mathbb{R}$ such that $K^{\omega}(x,\cdot)\in\mathcal{M}_1(E)$, where $(\Omega,\mathcal{F},\mathbb{P})$ is the underlying probability space. We say that two random probability kernels $K$, $K'$ are equal in finite dimensional distributions if for any $n$ and $x_1,...,x_n\in E$, the distributions of the $n$-tuple of random probability measures $\big(K(x_1,\cdot),\cdots,K(x_n,\cdot)\big)$ and $\big(K'(x_1,\cdot),...,K'(x_n,\cdot)\big)$ are equal. We say that two or more random probability kernels are independent if their finite dimensional distributions are independent. Under these notations, Le Jan and Raimond (see \cite[Definition 1.6]{le2004flows}) defines:
  \begin{defn}{\textbf{$($Stochastic flow of kernels$)$}}\label{defn_SFK}
  A stochastic flow of kernels is a collection $(K_{s,t})_{s\leq t}$ of random probability kernels on the Polish space $E$ such that
  \begin{enumerate}
  \item[\rm(i)] For every $s\leq t\leq u$ and $x\in E$, almost surely, $K_{s,s}(x,dz)=\delta_x(dz)$ and \\ $\int K_{s,t}(x,dy)K_{t,u}(y,dz)=K_{s,u}(x,dz)$ .
  \item[\rm(ii)] For every $s\leq t$ and $u\in\mathbb{R}$, $K_{s,t}$ and $K_{s+u,t+u}$ are equal in finite dimensional distributions.
  \item[\rm(iii)] For any $t_0<\cdots <t_n$, the random probability kernels $(K_{t_{i-1},t_i})_{i=1}^n$ are independent.
  \end{enumerate}
  \end{defn}
  \begin{remark}\label{rmk1.2}{\rm
  In general, it is not known whether condition (i) can be strengthened to\\
  \begin{tabular}{ll}
  (i)' & A.s., $K_{s,s}(x,dz)=\delta_x(dz)$ and $\int K_{s,t}(x,dy)K_{t,u}(y,dz)=K_{s,u}(x,dz)$ for all $x\in E$ \\
  & and $s\leq t\leq u$,
  \end{tabular}\\
  so that $(K_{s,t})_{s\leq t}$ is a bona fide family of transition kernels of a random motion in a random space-time environment and the kernels satisfy the Chapman-Kolmogorov equation. However, for Howitt-Warren flows, this has been shown to be possible in \cite{E.Sche.R.SunJ.Swart2014}.}
  \end{remark}

  Given a stochastic flow of kernels $(K_{s,t})_{s\leq t}$, if we set $$P^{(n)}_{t-s}(\vec{x},d\vec{y}):= \mathbb{E}\big[ K_{s,t}(x_1,dy_1),\cdots,K_{s,t}(x_n,dy_n) \big]~~~~~(\vec{x},\vec{y}\in E^n, s\leq t),$$ then it defines a family of Markov transition probability kernels on $E^n$. We call the Markov process with these transition probabilities the $n$-point motion associated with the stochastic flow of kernels $(K_{s,t})_{s\leq t}$ and a natural consistent condition is satisfied. Conversely, a fundamental theorem of Le Jan and Raimond \cite[Theorem 2.1]{le2004flows} shows that any consistent family of Feller processes on a locally compact space $E$ gives rise to a stochastic flow of kernels on $E$ and it is unique in the sense that any two versions of such stochastic flows of kernels are equal in finite dimensional distributions.

  Howitt and Warren constructed in \cite{howitt2009consistent} a consistent family of Feller processes on $\mathbb{R}$ via a well posed martingale problems, which are Brownian motions with drift $\beta\in\mathbb{R}$ and sticky interactions that can be characterized by a finite measure $\mu$ on $[0,1]$. The associated stochastic flow of kernels is now called the Howitt-Warren flow. The associated $n$-point motion evolves as $n$ independent Brownian motions with the same drift when they do not coincide, but it is possible that two or more Brownian motions may meet at the same location because of the stickiness which makes the $n$-point motion spend positive Lebesgue time together. In \cite[Proposition 2.3]{E.Sche.R.SunJ.Swart2014}, it was shown that for the Howitt-Warren flows, one can choose a set of probability one on which the relations in Definition 1.1 (i) holds for all $s\leq t\leq u$ and $x\in E$, as pointed out in Remark \ref{rmk1.2}.
  
  Since the formal formulation of Howitt-Warren $2$-point motion satisfies the purpose of this paper, we only recall the definition of the $2$-point motion. For the definition of Howitt-Warren martingale problems, we refer to either \cite[Definition 2.1]{howitt2009consistent} or \cite[Definition 2.2]{E.Sche.R.SunJ.Swart2014}.
  \begin{defn}{\textbf{$($Howitt-Warren $2$-point motion$)$}}
  \label{2pointmotion}
  A Howitt-Warren $2$-point motion is an $\mathbb{R}^2$-valued process $\vec{X}=(X^1(t),X^2(t))_{t\geq 0}$ where $(X^1(t))_{t\geq 0}$ and $(X^2(t))_{t\geq 0}$ are two Brownian motions with some drift $\beta\in \mathbb{R}$, the covariation process of $X^1$ and $X^2$ is given by
  \begin{equation}
  \langle X^1,X^2\rangle (t)=\int_{0}^{t} 1_{\{X^1(s)=X^2(s)\}}ds,
  \end{equation}
  and there exists $\nu\in(0,\infty)$, called the stickiness parameter, such that
  \begin{equation}
  \nu |X^1(t)-X^2(t)|- \int_{0}^{t} 1_{\{X^1(s)=X^2(s)\}}ds
  \end{equation}
  is a martingale with respect to the filtration generated by $\vec{X}$.
  \end{defn}

  \begin{remark}{\rm
  Here we need only one parameter $\nu$ to characterize the stickiness of $X^1$ and $X^2$ when they intersect instead of a finite measure $\mu$ needed to characterize the $n$-point motions as in \cite{E.Sche.R.SunJ.Swart2014}, where $\nu=1/\big(4\mu ([0,1])\big)$. Moreover, $X^1(t)-X^2(t)$ is the well-known sticky Brownian motion which can be obtained by time-changing a standard Brownian motion in such a way that at the origin its local time becomes $1/2\nu$ times the real time, and it behaves as a standard Brownian motion on $\mathbb{R}\backslash\{0\}$. In \cite{howitt2009consistent}, such $\vec{X}$ is called $\theta$-coupled Brownian motions with $\theta=1/2\nu$. For the Howitt-Warren $2$-point motion, we will give a SDE representation in the next section.}
  \end{remark}
  
  Given a realization of the Howitt-Warren flow $(K^{\omega}_{s,t})_{s\leq t}$, one can sample a set of independent random motions $(X^1_t,\cdots,X^n_t)$. We let $\mathbb{P}$ (resp.~$\mathbb{E}$) denote the probability (resp.~expectation) for the environment $\omega$, let $P^{\omega}$ (resp.~$E^{\omega}$) denote the quenched law (resp.~quenched expectation) for the random motions given the environment $\omega$, and let $P:=\mathbb{E}P^{\omega}(\cdot)$ (resp.~$E$) denote the averaged law (or annealed law) (resp. averaged expectation) for the random motions by integrating out the environment. Under this notation, for example, two random motions $(X^1_t,X^2_t)$ independent under the law $P^{\omega}$ are in fact a Howitt-Warren $2$-point motion under the averaged law $P$.
  
  If we consider a random motion $(X_t)_{t\geq 0}$ starting from the origin in the Howitt-Warren flow $(K^{\omega}_{s,t})_{s\leq t}$ with drift $\beta$ and characteristic measure $\mu$ (so that the stickiness parameter for the $2$-point motion is $\nu=1/4\mu ([0,1])$), then our first result is an almost sure quenched invariance principle for $(X_t)_{t\geq 0}$, which is analogous to the one for the random walk in i.i.d space-time random environment (\cite[Theorem 1]{F.Rass.T.Sepp.2005}).
  \begin{thm} \label{qIP}
   Let $Y_t:=X_t-\beta t$, then for $\mathbb{P}$-a.e.$\!$ $\omega$, the process $(Y_{nt}/\sqrt{n}) _{t\geq 0}$ converges weakly to a standard Brownian motion as $n\rightarrow\infty$. Moreover, for $\mathbb{P}$-a.e.$\!$ $\omega$, $n^{-1/2}\max_{s\leq nt}\big|E^{\omega}X_s-\beta s\big|$ converges to $0$, and therefore the same quenched invariance principle also holds for the process $\tilde{Y}_t:=X_t-E^{\omega}[X_t]$.
  \end{thm}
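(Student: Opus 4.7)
The plan is to prove three things in turn: (a) the quenched invariance principle for $Y_t$; (b) the quenched mean estimate $n^{-1/2}\max_{s\leq nt}|E^\omega X_s - \beta s|\to 0$; and (c) deduce the invariance principle for $\tilde Y$ from (a) and (b) via the identity $\tilde Y_s = Y_s - (E^\omega X_s - \beta s)$. Under the annealed law $P$, $Y$ is a standard Brownian motion, so the annealed IP is automatic; all the work lies in passing from annealed to quenched.

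For (a), the strategy is a classical second-moment (variance-reduction) argument driven by the Howitt--Warren $2$-point motion. Sampling two independent copies $X^1,X^2$ of $X$ under $P^\omega$ (they form a $2$-point motion under $P$), for any bounded continuous test function $f$ and times $t_1<\dots<t_k$,
\[
\mathrm{Var}_{\mathbb{P}}\!\left(E^\omega\bigl[f\bigl(\tfrac{Y_{nt_1}}{\sqrt n},\dots,\tfrac{Y_{nt_k}}{\sqrt n}\bigr)\bigr]\right) = E\!\left[f\bigl(\vec Y^1_n\bigr)\,\overline{f\bigl(\vec Y^2_n\bigr)}\right] - \big|E[f(\vec Y_n)]\big|^2,
\]
where $\vec Y^j_n := (Y^j_{nt_1}/\sqrt n,\dots,Y^j_{nt_k}/\sqrt n)$. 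This vanishes as $n\to\infty$ once we show that $(\vec Y^1_n,\vec Y^2_n)$ converges under $P$ to two independent centred Gaussians. Each marginal is already Brownian with drift $\beta$, so the only content is asymptotic decoupling, which follows directly from Definition \ref{2pointmotion}: taking expectations in the martingale relation there gives $E\!\int_0^t 1_{\{X^1=X^2\}}ds = \nu\,E|X^1_t-X^2_t| = O(\sqrt t)$, so the two motions spend at most $O(\sqrt n)$ units of time coincident on $[0,nt]$, which is diffusively negligible. A quantitative version of this delivers a variance bound of order $n^{-1/2}$, which I will promote to $\mathbb{P}$-a.s.\ convergence of the quenched characteristic function by extracting a subsequence $n_k = k^3$, applying Borel--Cantelli, and interpolating the full sequence via quenched tightness; a Cram\'er--Wold reduction then gives the full finite-dimensional convergence.

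Quenched tightness in $C([0,T])$ will be obtained from Kolmogorov's criterion applied to an a.s.\ moment estimate $E^\omega[|Y_t-Y_s|^4]\le C(\omega)(t-s)^2$; the annealed version is immediate, and its $\mathbb{P}$-variance is again controlled through the $2$-point motion by $E[|Y^1_t-Y^1_s|^4|Y^2_t-Y^2_s|^4]$, a more delicate but analogous computation. For (b), the key observation is that $M_s := E^\omega X_s - \beta s$ is a $\mathbb{P}$-martingale with respect to the environment filtration $\mathcal F^{\mathrm{env}}_s := \sigma(K^\omega_{u,v}:0\le u\le v\le s)$: writing $K^\omega_{0,s}(0,\cdot)=\int K^\omega_{0,u}(0,dz)\,K^\omega_{u,s}(z,\cdot)$, the independence of $K^\omega_{u,s}$ from $\mathcal F^{\mathrm{env}}_u$ (Definition \ref{defn_SFK}\,(iii)) together with $\mathbb{E}\!\int y\,K^\omega_{u,s}(z,dy)=z+\beta(s-u)$ (since the averaged one-point motion is Brownian with drift $\beta$) yields $\mathbb{E}[M_s\mid\mathcal F^{\mathrm{env}}_u]=M_u$. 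Its second moment is
\[
\mathbb{E}[M_s^2] = \mathbb{E}[(E^\omega X_s)^2]-\beta^2 s^2 = E[X^1_s X^2_s]-\beta^2 s^2 = E\!\int_0^s 1_{\{X^1=X^2\}}du = O(\sqrt s),
\]
so Doob's $L^2$-maximal inequality gives $\mathbb{E}[\max_{s\le nt}M_s^2] = O(\sqrt n)$. A Borel--Cantelli argument along $n_k = k^3$, combined with the monotonicity of $T\mapsto\max_{s\le T}|M_s|$, upgrades this to the claimed a.s.\ statement.

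The main technical hurdle will be the quenched tightness step: the variance-reduction arguments for characteristic functions and for the quenched mean both reduce to the single clean input $E\!\int_0^t 1_{\{X^1=X^2\}}ds = \nu\,E|X^1_t-X^2_t| = O(\sqrt t)$ furnished by Definition \ref{2pointmotion}, whereas the fourth-moment bound required for tightness involves the joint space-time increments of the $2$-point motion and will demand a separate, finer analysis. Once (a) and (b) are in place, (c) is immediate from $\tilde Y_{nt}/\sqrt n = Y_{nt}/\sqrt n - M_{nt}/\sqrt n$.
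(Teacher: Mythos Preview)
Your overall architecture matches the paper's: second-moment reduction via the $2$-point motion, Borel--Cantelli along a polynomial subsequence, then interpolation to the full sequence. Part (b) is essentially identical to what the paper does (the martingale property of $M_s$, Doob's $L^2$ inequality, the $O(\sqrt s)$ bound from the collision time, and a subsequence argument).

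The real difference is in part (a). You split the problem into finite-dimensional convergence plus a separate quenched tightness step, and you correctly identify tightness as the main hurdle. The paper avoids this hurdle entirely. Instead of testing against characteristic functions of marginals, it tests against a countable convergence-determining family of bounded Lipschitz functionals on the full path space $C[0,T]$ (Lemma~\ref{3A}). The variance bound is then obtained not from the bare collision-time estimate but from the explicit SDE representation of the $2$-point motion (Theorem~\ref{thmsde}, equations~\eqref{spde}): that SDE couples $(X^1,X^2)$ with three independent Brownian motions $B^1,B^2,B^3$ so that $Y^{i,(n)}-B^{i,(n)}$ is a martingale with quadratic variation equal to the collision time. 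Doob's inequality turns this into the sup-norm bound $E\big[\|Y^{i,(n)}-B^{i,(n)}\|_{C[0,T]}\big]\le C n^{-1/4}$, and Lipschitz continuity of $f$ converts it directly into $\mathbb{E}\big[(E^\omega f(Y^{(n)})-Ef(B))^2\big]\le C_f n^{-1/4}$ (Lemma~\ref{3B}). This single path-level estimate simultaneously gives the rate and the uniform-in-time control that your Kolmogorov-tightness step was meant to supply; the interpolation to the full sequence is then a direct sup-norm moment computation rather than an appeal to tightness.

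Two small cautions on your version. First, the rate you claim, $O(n^{-1/2})$, is not what the natural coupling yields: the coupling sketched above gives $O(n^{-1/4})$, and with that rate your subsequence $k^3$ produces $\sum k^{-3/4}=\infty$, so Borel--Cantelli fails; the paper uses $k^5$. Second, your proposed route to quenched tightness---showing $E^\omega[|Y_t-Y_s|^4]\le C(\omega)(t-s)^2$ a.s.\ via a variance argument on the $2$-point motion---faces the usual difficulty that variance control at fixed pairs $(s,t)$ does not by itself give a uniform-in-$(s,t)$ random constant $C(\omega)$; some additional chaining or monotonicity is needed. This is exactly the analysis the paper's path-space coupling lets you skip.
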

  \noindent Since $\mathbb{P}$ is invariant w.r.t. the space-time shift of the environment $\omega$, this invariant principle holds for the random motion starting from any space-time point.
  
  If we use the superscript to represent the starting point of the random motion, i.e., $(X^{x_0,t_0}_t)_{t\geq t_0}$ is a random motion starting from the space-time point $(x_0,t_0)$, then we can state our second result:
     
  \begin{thm} \label{GaussianlimitThm}
  For every $(t,r)\in\mathbb{R}^+\times\mathbb{R}$, define two rescaled centered quenched means as follows:
  \begin{eqnarray}
  & a_n(t,r):=n^{-1/4}\left(E^{\omega}\big[X^{r\sqrt{n}-\beta nt,-nt}_0\big]-r\sqrt{n}\right),\\
  & b_n(t,r)=n^{-1/4}\left(E^{\omega}\big[X^{r\sqrt{n},0}_{nt}\big]-r\sqrt{n}-\beta nt\right),
  \end{eqnarray}
  then the finite dimensional distributions of the processes $\{a_n(t,r):(t,r)\in\mathbb{R}^+\times\mathbb{R}\}$ and $\{b_n(t,r):(t,r)\in\mathbb{R}^+\times\mathbb{R}\}$ converge to those of the Gaussian processes $\{a(t,r):(t,r)\in\mathbb{R}^+\times\mathbb{R}\}$ and $\{b(t,r):(t,r)\in\mathbb{R}^+\times\mathbb{R}\}$ with covariance functions given by $\Gamma((t,r),(s,q))\!$ and $\Gamma((t\wedge s,r),(t\wedge s,q))$ respectively, where
  \begin{equation}\label{CovFun}
  \Gamma((t,r),(s,q)):=\nu \int_{|t-s|}^{t+s}\frac{1}{\sqrt{\pi u}}e^{-\frac{(r-q)^2}{2u}}du.
  \end{equation}
  \end{thm}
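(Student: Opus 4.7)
The plan is to establish finite-dimensional distribution convergence by combining a martingale central limit theorem with an explicit second-moment computation based on the averaged $2$-point motion. I focus on the $b_n$-process; the $a_n$-process requires the same basic ingredients with one further book-keeping step. The key structural observation is that for each fixed $x\in\mathbb{R}$,
\[
M^x_s \;:=\; E^{\omega}\bigl[X^{x,0}_s\bigr] - x - \beta s, \qquad s\ge 0,
\]
is a continuous $\mathcal{F}_s$-martingale started at $0$, where $\mathcal{F}_s=\sigma(K_{0,u}^\omega : 0\le u\le s)$. This is a direct consequence of Definition~\ref{defn_SFK}(iii): conditioning on $\mathcal{F}_s$ fixes $K_{0,s}^\omega$, and averaging the next increment against the independent fresh kernel $K_{s,s+u}^\omega$ recovers the drift $\beta u$ of the averaged $1$-point Brownian motion. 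For a Cram\'er--Wold test combination $\sum_i c_i\, b_n(t_i,r_i)$ with $t_1\le\cdots\le t_k$, the stopped sum
\[
\widetilde{M}^{(n)}_s \;:=\; \sum_{i=1}^{k} c_i\, M^{r_i\sqrt{n}}_{s\wedge n t_i}
\]
is then a continuous martingale with $\widetilde{M}^{(n)}_\infty = n^{1/4}\sum_i c_i\, b_n(t_i,r_i)$, so finite-dimensional convergence reduces to a CLT for $n^{-1/4}\widetilde{M}^{(n)}_\infty$.

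The limiting variance is computed through the averaged $2$-point motion. Because independent $P^\omega$-samples averaged over $\omega$ reproduce the $2$-point law, for any $x_1,x_2\in\mathbb{R}$ and $s\ge 0$,
\[
\mathbb{E}\bigl[M^{x_1}_s\, M^{x_2}_s\bigr] \;=\; \mathrm{Cov}(X^1_s, X^2_s) \;=\; \mathbb{E}\!\left[\int_0^s \mathbf{1}_{\{X^1_u=X^2_u\}}\,du\right],
\]
where $(X^1,X^2)$ is the Howitt--Warren $2$-point motion started at $(x_1,x_2)$; by Definition~\ref{2pointmotion} the right-hand side equals $\nu\bigl(\mathbb{E}|X^1_s-X^2_s|-|x_1-x_2|\bigr)$. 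Setting $(x_1,x_2)=(r_i\sqrt n, r_j\sqrt n)$, $s=n(t_i\wedge t_j)$, and using that the sticky Brownian motion $X^1-X^2$ spends only $O(\sqrt n)=o(n)$ time at the origin on a window of length $n$ (so its rescaling $(X^1-X^2)_{n\cdot}/\sqrt n$ converges to a non-sticky diffusion), together with the identity $\frac{d}{du}\mathbb{E}|N(m,u)|=(2\pi u)^{-1/2}e^{-m^2/(2u)}$, one obtains
\[
n^{-1/2}\,\mathbb{E}\bigl[M^{r_i\sqrt n}_{n(t_i\wedge t_j)}\, M^{r_j\sqrt n}_{n(t_i\wedge t_j)}\bigr] \;\longrightarrow\; \Gamma\bigl((t_i\wedge t_j,r_i),(t_i\wedge t_j,r_j)\bigr),
\]
and hence $n^{-1/2}\mathbb{E}\bigl[(\widetilde M^{(n)}_\infty)^2\bigr]\to \sigma^2:=\sum_{i,j} c_ic_j\,\Gamma((t_i\wedge t_j,r_i),(t_i\wedge t_j,r_j))$. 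The analogous $a_n$-covariance is obtained from the same template, except that when $t>s$ the trajectory defining $a_n(t,r)$ evolves alone on $[-nt,-ns]$ and arrives at time $-ns$ with a smeared position of law $N((r-q)\sqrt n,\, n(t-s))$; feeding this into the sticky $2$-point calculation produces precisely the integration range $|t-s|\le u\le t+s$ that distinguishes the $a$-covariance.

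The last step is to apply the continuous-time martingale CLT. Continuity of $\widetilde M^{(n)}$ makes the Lindeberg hypothesis automatic, so it suffices to show $n^{-1/2}\langle\widetilde M^{(n)}\rangle_\infty \to \sigma^2$ in probability. The $L^1$-convergence $n^{-1/2}\mathbb{E}[\langle\widetilde M^{(n)}\rangle_\infty]\to\sigma^2$ is already in hand, so what remains is the variance concentration $\mathrm{Var}\bigl(\langle\widetilde M^{(n)}\rangle_\infty\bigr)=o(n)$. This in turn reduces to a coincidence-time estimate on the averaged $4$-point motion, namely a bound on $\mathbb{E}\bigl[\int_0^{nt}\!\!\int_0^{nt}\mathbf{1}_{\{X^1_u=X^2_u\}}\mathbf{1}_{\{X^3_v=X^4_v\}}\,du\,dv\bigr]$ that matches the square of the $2$-point bound to leading order. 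Once this is in hand, the martingale CLT gives $n^{-1/4}\widetilde M^{(n)}_\infty \Rightarrow N(0,\sigma^2)$ and Cram\'er--Wold yields joint Gaussian convergence with covariance kernel $\Gamma$.

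The main technical obstacle I anticipate is exactly this $4$-point-motion variance-concentration estimate: while the $2$-point computation follows immediately from the sticky-BM martingale in Definition~\ref{2pointmotion}, upgrading the $L^1$-convergence of the quadratic variation to convergence in probability requires a refined analysis of the joint sticky coalescence of four Howitt--Warren trajectories. This is the continuous space-time analogue of the second-moment control used in the random-walk-in-space-time-i.i.d.-environment CLT of Rassoul-Agha and Sepp\"al\"ainen \cite{F.Rass.T.Sepp.2005}.
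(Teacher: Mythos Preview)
Your approach is correct in outline and genuinely differs from the paper's. The paper works with $a_n$ rather than $b_n$, artificially discretizes time into $n$ increments, applies a \emph{discrete}-time martingale CLT to the resulting martingale difference array at a single time level (Lemma~\ref{one-time-level}), and then extends to multiple time levels by an induction that feeds the quenched invariance principle (Theorem~\ref{qIP}) into the decomposition~(\ref{5G}) together with Proposition~\ref{prop1}. Your route---running the continuous-time CLT directly on the stopped sum $\widetilde M^{(n)}$---is more economical for $b_n$: it handles all time levels simultaneously and does not invoke the quenched invariance principle at all. Both arguments, however, collapse onto the same essential obstacle, namely showing that the predictable quadratic variation concentrates, and both resolve it by a $4$-point-motion estimate; the paper implements this concretely via the $R_l$ decomposition (\ref{5A})--(\ref{4.23}) and Lemma~\ref{lem-guji2}, which is exactly the coincidence-time bound you flag as the missing ingredient. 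Two cautions: first, your claim that continuity of $\widetilde M^{(n)}$ makes Lindeberg automatic presupposes that $s\mapsto E^\omega[X^{x,0}_s]$ has continuous paths, which the paper never states and which you would need to extract from the Schertzer--Sun--Swart construction (or else revert to a discretization as the paper does). Second, your treatment of $a_n$ is underspecified: the ``smeared position'' heuristic you sketch is precisely the decomposition~(\ref{5G}), but turning it into a proof requires either the paper's induction-plus-Theorem~\ref{qIP} machinery or an equivalent argument, so the claim that $a_n$ follows from $b_n$ by ``one further book-keeping step'' understates the work involved.
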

  \begin{remark}\label{rmk1.7}{\rm
  (i) We will only give the proof of the convergence of $\big(a_n(t,r)\big)$, since all the ingredients and arguments needed in the proof for $\big(b_n(t,r)\big)$ are essentially the same. Indeed, if we define the translation $T_{t,x}$ of the random environment that makes $(x,t)$ the new space-time origin, then it is easy to see that $b_n(t,r)=a_n(t,r)\circ T_{nt,\beta nt}$. 
  (ii) For any $(t,r)\in\mathbb{R}^+\times\mathbb{R}$, $a_n(t,r)$ is a random variable of the environment between time $-nt$ and $0$. 
  (iii) The variance of the quenched mean process is of order $n^{1/2}$ (see Lemma \ref{variance} (ii)), and this leads to the choice of the scale $n^{-1/4}$ in $a_n(t,r)$ and $b_n(t,r)$.}
  \end{remark}
  
  \end{subsection}
  \begin{subsection}{Dual smoothing process}
  Given a Howitt-Warren flow $(K^{\omega}_{s,t})_{s\leq t}$, a Howitt-Warren process, which is a measure-valued Markov process, is defined by
  \begin{equation}
    \rho_t(dy):=\int \rho_0(dx)K^{\omega}_{0,t}(x,dy)~~~~~(t\geq 0),
  \end{equation} where $\rho_0$ a finite measure on $\mathbb{R}$. A function-valued dual smoothing process is defined by
  \begin{equation}\label{DSP}
    \zeta_t(x):=\int K^{\omega}_{-t,0}(x,dy)\zeta_0(y)~~~~~~(x\in\mathbb{R},~~ t\geq 0),
  \end{equation} where $\zeta_0\in D_b(\mathbb{R})$, the space of bounded c\'adl\'ag functions on $\mathbb{R}$. These two processes are shown to be dual to each other in \cite[Lemma 11.1]{E.Sche.R.SunJ.Swart2014}. Indeed, from $(K^{\omega}_{s,t})_{s\leq t}$ one can define a dual Howitt-Warren process $(\hat{\rho}_t)_{t\geq 0}$, and regard $\zeta_t$ as its height function at time $t$. To see this fact at a heuristic level, we begin with the description of the discrete Howitt-Warren flow.
  
  Let $\mathbb{Z}^2_{even}\!\!:=\{(x,t)\!:x,t\in\mathbb{Z},x+t~{\rm is~even}\}$, where the first and second coordinates are interpreted as space and time. Let $\omega\!:=\!(\omega_z)_{z\in\mathbb{Z}^2_{even}}$ be i.i.d.$\!$ $[0,1]$-valued random variables with common distribution $\mathbb{Q}$. We view $\omega$ as a random space-time environment for a random walk. That is, conditional on $\omega$, if a random walk is at time $t$ at the position $x$, then in the next unit time step the walk jumps to $x+1$ with probability $\omega_{(x,t)}$ and to $x-1$ with the remaining probability $1-\omega_{(x,t)}$.
  
  If we use $Q^{\omega}_{(s,x)}$ to denote the quenched law of a random walk $X:=(X_t)_{t\geq s}$ starting from the space-time point $(x,s)$, then setting $\bar{K}^{\omega}_{s,t}(x,y):=Q^{\omega}_{(s,x)}\big(X(t)\!=\!y\big)$ defines the discrete Howitt-Warren flow $(\bar{K}^{\omega}_{s,t})_{s\leq t}$. It is shown in \cite{E.Sche.R.SunJ.Swart2014} that with suitable assumption the discrete flow under diffusive scaling converges to the Howitt-Warren flow, and a graphical construction analogous to the discrete one can be carried over to the continuum level. A natural corollary of this graphical construction is that in Definition 1.1, condition (i) can be strengthened to (i)' for Howitt-Warren flows.
  
  Given a realization of the environment, one can sample coalescing random walks starting from every point in $\mathbb{Z}^2_{even}$, which is called a (random) discrete web. Moreover, one can couple a dual discrete web in the following way. Consider the coalescing random walks running backwards in time, starting from every point in $\mathbb{Z}^2_{odd}:=\mathbb{Z}^2\setminus\mathbb{Z}^2_{even}$. For each $(x,t+1)\in\mathbb{Z}^2_{odd}$, the backward walk at time $t+1$ at the position $x$ jumps to $(x-1,t)$ if the forward walk in the discrete web jumps from $(x,t)$ to $(x+1,t+1)$, and otherwise the backward walk jumps to $(x+1,t)$. It is then easy to see that the law of the dual discrete web determines the dual discrete Howitt-Warren flow, and hence the dual Howitt-Warren flow $(\hat{K}_{t,s})_{t\geq s}$, which is equal in distribution to the Howitt-Warren flow. Furthermore, noting the non-crossing property (i.e., in the coupling, random walks in the discrete web does not cross any random walk in the dual web), we have the following relationship:
  \begin{equation} \label{KhatK}
  K_{s,t}\big(x,[y,\infty) \big)=\hat{K}_{t,s}\big(y,(-\infty,x]\big)~~~~~(x,y\in\mathbb{R}, s\leq t).
  \end{equation}
  In other words, if we sample a forward random motion $X:=(X_u)_{u\geq s}$ from the space-time point $(x,s)$ in $(K_{u,v})_{u\leq v}$ and a backward random motion $\hat{X}:=(\hat{X}_u)_{u\leq t}$ from $(y,t)$ in $(\hat{K}_{v,u})_{v\geq u}$, and if $s\leq t$ and $X_t\geq y$, then since the paths of $X$ and $\hat{X}$ do not cross each other, we must have $\hat{X}_s\leq x$. Note that for deterministic $y$, the probability that $X_t=y$ is zero, therefore in (\ref{KhatK}) we can change the closed interval to open interval.
  
  Now if we consider the dual Howitt-Warren process with finite initial measure $\hat{\rho}_0$,
  \begin{equation}
  \hat{\rho}_t(dy):=\int \hat{\rho}_0(dx)\hat{K}_{0,-t}(x,dy),
  \end{equation}
  then setting $\zeta_0$ in (\ref{DSP}) as the height function of $\hat{\rho}_0$ by
  \begin{equation}
  \zeta_0(x)=\int_{(-\infty,x]}\hat{\rho}_0(dy)~~~~~(x\in\mathbb{R}),
  \end{equation}
  we have that $\zeta_t$ is the height function of $\hat{\rho}_t$, and by (\ref{KhatK}) the current flow of $\hat{\rho}$ over the line segment from $(0,x)$ to $(-t,y)$ is:
  \begin{eqnarray}
  \nonumber && \int_{(x,\infty)}\hat{\rho}_0(dz)\hat{K}_{0,-t}\big(z,(-\infty,y]\big) - \int_{(-\infty,x]}\hat{\rho}_0(dz)\hat{K}_{0,-t}\big(z,(y,\infty)\big) \\
  \nonumber&=& \int_{(x,\infty)}\Big\{\int_{z}^{\infty}K_{-t,0}(y,dw)\Big\}\hat{\rho}_0(dz) - \int_{(-\infty,x]}\Big\{\int_{-\infty}^{z}K_{-t,0}(y,dw)\Big\}\hat{\rho}_0(dz) \\
  \nonumber&=& \int\Big\{\int_{x}^{w}\hat{\rho}_0(dz)\Big\}K_{-t,0}(y,dw) \\
  &=& \zeta_t(y)-\zeta_0(x).
  \end{eqnarray}
  As a result, considering the current fluctuations of the dual Howitt-Warren process is equivalent to considering the fluctuations of the dual smoothing process.  
    
  Now we consider the fluctuations of a class of generalized dual smoothing processes. For any deterministic point $x_0\in\mathbb{R}$, we look at the fluctuation rescaled by $n^{-1/4}$ along the characteristic line $x(t)=x_0-\beta t$, namely the quantity
   \begin{equation}\label{DSPzn}
   z_n(t,r):=n^{-1/4}\left\{\zeta^{(n)}_{nt}(nx_0+r\sqrt{n}-\beta nt)-\zeta^{(n)}_0(nx_0+r\sqrt{n})\right\}.
   \end{equation}
   
   For our purpose, we assume the following initial condition:
   \begin{assumption}
   For each $n\in\mathbb{N}$, define an initial condition $\zeta^{(n)}_0(x)=f^{(n)}(x)+W(x)$, where $\big(W(x)\big)_{x\in\mathbb{R}}$ is a two-sided Brownian motion, independent of the Howitt-Warren flow, with $W(0)=0$, and $f^{(n)}(x):=nf(\frac{x}{n})$, where $f$ is a $C^1$ function such that $f(0)=0$, $f'$ is bounded and satisfies the following H\"older continuity condition: there exist constants $C>0$ and $\gamma>1/2$ such that
   \begin{equation}\label{6A}
   \left|f'(x)-f'(y)\right|<C\left|x-y\right|^\gamma~~~~~~~~(x,y\in\mathbb{R}).
   \end{equation}
   \end{assumption}
   
   It turns out that under Assumption I, as $n$ tends to $\infty$, $\{z_n(t,r):(t,r)\in\mathbb{R}^+\times\mathbb{R}\}$ converges to a Gaussian process $\{z(t,r):(t,r)\in\mathbb{R}^+\times\mathbb{R}\}$ in finite dimensional distributions. So we next describe the limiting process.
   
   Define a covariance function $\Gamma_0$ on $(\mathbb{R}^+ \times \mathbb{R} ) \times(\mathbb{R}^+\times \mathbb{R})$,
   \begin{eqnarray}\label{1.14}
   \nonumber\Gamma_0((t,r),(s,q)) &:=& \int_{r\vee q}^{\infty} P\left(B(t) > z-r \right)P\left(B(s) > z-q \right)dz \\
    \nonumber&&-1_{\{r>q\}} \int_{q}^{r} P\left(B(t) < z-r \right) P\left(B(s) > z-q \right)dz \\
    \nonumber&&-1_{\{r<q\}} \int^{q}_{r} P\left(B(t)> z-r \right)P\left(B(s) < z-q \right)dz \\
    && +\int^{r\wedge q}_{-\infty} P\left(B(t) < z-r \right)P\left(B(s) < z-q \right)dz
   \end{eqnarray}
   where $B$ is a standard Brownian motion,
   and recall the covariance function $\Gamma$ defined in (\ref{CovFun}):
   \begin{eqnarray}
   \Gamma((t,r),(s,q))=\nu \int_{|t-s|}^{t+s}\frac{1}{\sqrt{\pi u}}e^{-\frac{(r-q)^2}{2u}}du.
   \end{eqnarray}
   Then $\{z(t,r):(t,r)\in\mathbb{R}+\times\mathbb{R}\}$ is a mean zero Gaussian process with covariance given by 
   \begin{equation}\label{DSPlimitcov}
   Ez(t,r)z(s,q) = f'^2(x_0)\Gamma((t,r),(s,q))+ \Gamma_0\left((t,r),(s,q)\right).
   \end{equation}
   In fact, the first term in the right-hand side of (\ref{DSPlimitcov}) comes from the fluctuation caused by the dynamics, while the second is from the initial noise $\zeta_0$.
   
   \begin{thm}\label{final}
   Under Assumption I, the finite dimensional distributions of the current fluctuations $\{z_n(t,r):(t,r)\in\mathbb{R}^+\times\mathbb{R}\}$ defined in (\ref{DSPzn}) converge weakly to those of the mean zero Gaussian process $\{z(t,r):(t,r)\in\mathbb{R}^+\times\mathbb{R}\}$ with covariance function \eqref{DSPlimitcov}.
   \end{thm}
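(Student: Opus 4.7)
\emph{Plan.} The dual smoothing process is linear in its initial data, so I decompose $\zeta^{(n)}_0 = f^{(n)} + W$ and write $z_n(t,r) = A_n(t,r) + B_n(t,r)$ with
\[
A_n(t,r) := n^{-1/4}\bigl\{E^{\omega}[f^{(n)}(X^{x_n,-nt}_0)] - f^{(n)}(y_n)\bigr\}, \qquad
B_n(t,r) := n^{-1/4}\,E^{\omega}\bigl[W(X^{x_n,-nt}_0) - W(y_n)\bigr],
\]
where $x_n := nx_0 + r\sqrt{n} - \beta nt$ and $y_n := nx_0 + r\sqrt{n}$, so that $y_n$ is the annealed mean of $X^{x_n,-nt}_0$. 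The term $A_n$ is $\omega$-measurable and should produce the dynamical contribution $f'^{2}(x_0)\Gamma$; the term $B_n$ depends on both $\omega$ and $W$ and should produce the initial-noise contribution $\Gamma_0$.

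For $A_n$, using $(f^{(n)})'(y) = f'(y/n)$ and the H\"older hypothesis \eqref{6A}, I Taylor expand
\[
f^{(n)}(y) - f^{(n)}(y_n) = f'(y_n/n)(y - y_n) + R_n(y, y_n), \qquad |R_n(y,y_n)| \leq C n^{-\gamma}|y - y_n|^{\gamma + 1}.
\]
Applying $n^{-1/4}E^{\omega}$ and identifying $n^{-1/4}(E^{\omega}X^{x_n,-nt}_0 - y_n)$ in law with $a_n(t,r)$ by space-shift invariance of the environment, I obtain $A_n(t,r) = f'(x_0 + r/\sqrt{n})\,\widetilde a_n(t,r) + \varepsilon_n(t,r)$, where the remainder satisfies $E|\varepsilon_n| \leq C n^{-1/4-\gamma}\cdot E|X^{x_n,-nt}_0 - y_n|^{\gamma+1} \leq C n^{1/4-\gamma/2} \to 0$ by the annealed Gaussian bound on the displacement and $\gamma > 1/2$. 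Theorem~\ref{GaussianlimitThm} then yields $A_n \Rightarrow f'(x_0)\,a$ in finite-dimensional distributions.

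For $B_n$, the essential observation is that conditional on $\omega$ it is a linear functional of the Brownian motion $W$, hence Gaussian. Writing $W(y)-W(y_n) = \int \psi(z;y_n,y)\,dW(z)$ with $\psi$ the signed indicator of the interval between $y_n$ and $y$ and exchanging integrations,
\[
B_n(t,r) = n^{-1/4}\!\int \Psi^{\omega}_n(z;t,r)\,dW(z), \qquad \Psi^{\omega}_n(z;t,r) := K^{\omega}_{-nt,0}\bigl(x_n,[z,\infty)\bigr) - \mathbb{1}_{\{z\leq y_n\}}.
\]
By It\^o isometry the conditional covariance is $C^{\omega}_n\bigl((t,r),(s,q)\bigr) = n^{-1/2}\int \Psi^{\omega}_n(z;t,r)\Psi^{\omega}_n(z;s,q)\,dz$. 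To compute $\mathbb{E}[C^{\omega}_n]$ I change variables to $\zeta = (z - nx_0)/\sqrt n$ and expand $\Psi_n\Psi_n$ into the four sign cases of $\zeta$ versus $r$ and $q$; the diagonal $K\cdot K$ term involves the annealed two-point motion of the Howitt-Warren flow, whose sticky interaction is sub-diffusive and contributes only at lower order, so the two-point density factorises asymptotically into a product of the Gaussian one-point densities. Matching the resulting four integrands with the four terms in \eqref{1.14} gives $\mathbb{E}[C^{\omega}_n] \to \Gamma_0$.

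Finally, since $A_n$ is $\omega$-measurable and $(B_n(t_j,r_j))_j$ is Gaussian with covariance $C^{\omega}_n$ given $\omega$, conditioning yields
\[
E\bigl[\exp(i\lambda^{T} z_n)\bigr] = E\Bigl[\exp(i\lambda^{T} A_n)\cdot \exp\bigl(-\tfrac12 \lambda^{T} C^{\omega}_n \lambda\bigr)\Bigr].
\]
Combining $A_n \Rightarrow f'(x_0)a$ with the concentration $C^{\omega}_n \to \Gamma_0$ in probability, Slutsky and the continuous mapping theorem give convergence of the right-hand side to $E[\exp(i\lambda^{T} f'(x_0)a)]\cdot \exp(-\tfrac12 \lambda^{T}\Gamma_0\lambda)$, which is the characteristic function of $f'(x_0)a + G$ with $G$ a centered Gaussian of covariance $\Gamma_0$ independent of $a$; the limit covariance is then $f'^{2}(x_0)\Gamma + \Gamma_0$, as claimed. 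The hardest step is precisely the concentration $C^{\omega}_n \to \Gamma_0$ in probability: showing $\mathrm{Var}(C^{\omega}_n) \to 0$ requires a four-point estimate for the Howitt-Warren flow, quantifying how four random motions asymptotically decouple on the diffusive scale, which is strictly more delicate than the two-point computation that already sufficed for $\mathbb{E}[C^{\omega}_n]$.
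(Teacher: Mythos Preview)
Your decomposition $z_n = A_n + B_n$, the Gaussian-conditional structure of $B_n$ via the Wiener integral of $\Psi_n^\omega$, and the characteristic-function combination at the end all coincide with the paper's argument (the paper writes $n^{-1/4}U_n + n^{-1/4}V_n$ and packages the last step as Lemma~\ref{lemma5.4}). Your treatment of $A_n$ by Taylor expansion with H\"older remainder is also essentially the paper's Lemma~\ref{lemma5.2}; the paper rewrites the increment as $\int f'(z/n)P^\omega(X>z)\,dz$ and controls the tail uniformly in $(t,r)$ using the dual Howitt--Warren flow and non-crossing, but your moment bound $n^{1/4-\gamma/2}\to 0$ suffices for finite-dimensional convergence.

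The substantive difference is in how you establish $C_n^\omega\to\Gamma_0$. You propose to compute $\mathbb{E}[C_n^\omega]$ via the two-point motion (arguing that sticky correlations are sub-diffusive so the joint tail factorises) and then to prove $\mathrm{Var}(C_n^\omega)\to 0$ via a four-point estimate, which you flag as the hardest step and leave unexecuted. The paper avoids this entirely: it invokes the quenched invariance principle (Theorem~\ref{qIP}) to get, for $\mathbb{P}$-a.e.\ $\omega$, pointwise convergence of the quenched tails $P^\omega(X^{x_n,-nt}_0 > nx_0+\sqrt{n}\zeta)\to P(B(t)>\zeta-r)$; substituting into the It\^o-isometry expression for $C_n^\omega$ and matching with \eqref{1.14} yields $C_n^\omega\to\Gamma_0$ almost surely. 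No two- or four-point computation is needed. Since Theorem~\ref{GaussianlimitThm}, which you already rely on for $A_n$, itself rests on Theorem~\ref{qIP}, the quenched invariance principle is available for free here and does double duty. Your route would work in principle, but the step you single out as the crux is in fact unnecessary.
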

  
  In contrast to the discrete random average process (RAP), we have used some different strategies to overcome the technical difficulties in the continuum case. The quenched invariance principle of the random walk in an i.i.d.$\!$ space-time random environment, which is a discrete analogue of Theorem \ref{qIP}, was previously proved based on the view of the particle and martingale techniques in \cite{F.Rass.T.Sepp.2005}. However, this method can not be transferred to the continuum case easily. Instead, we applied the second moment method to show Theorem \ref{qIP}, which is also efficient for the discrete case (we provide the proof in Appendix A). Furthermore, we take advantage of self-duality of the Howitt-Warren flows and different carefully-chosen couplings to approach several estimates, which turn out to be more difficult than the discrete case. Besides, these techniques could be useful for stochastic flows of kernels and Brownian web (see \cite{FontesIsopiNewmanRavishankar2004}).

  The rest of the paper is organized as follows. Section 2 provides the SDEs for the Howitt-Warren $2$-point motion, establishes some properties of their collision local time, and computes the covariance of the 2-point motion, which are served as preliminaries of the main proofs. Section 3 proves the quenched invariance principle for the random motion in the Howitt-Warren flows. Section 4 and Section 5 prove Theorem \ref{GaussianlimitThm} and Theorem \ref{final} respectively. In Appendix A, we provide a proof of the quenched invariance principle of the random motion in an i.i.d.$\!$ space-time random environment via the second moment method (a result of independent interest).
  \end{subsection}

 \end{section}
 \begin{section}{Howitt-Warren $2$-point motion preliminaries}

 In this section we first give a set of SDEs that characterizes the Howitt-Warren 2-point motion, which are Brownian motions with sticky interactions when they meet. We then derive two useful lemmas about the sticky Brownian motion. Lastly we consider the covariance of the 2-point motion.

  \begin{subsection}{SDEs for the Howitt-Warren $2$-point motion}
  We first recall the concept of local time of continuous local martingale, and then give a set of SDEs which has a unique weak solution. We will show that a Howitt-Warren $2$-point motion can be represented by the weak solution of the SDEs. This representation plays an important role throughout the paper.

  In Section 3.7 of \cite{I.Kara.S.E.Shre.1991}, local time of continuous semimartingale is discussed. It is a generalized concept of local time of Brownian motion, first introduced by P. L\'evy, which is used to measure the time that a Brownian motion spends in the vicinity of a deterministic point. Here we only need to consider the local time $\Lambda_{x_0}(t,x)$ of local martingale, and we list some properties of $\Lambda_{x_0}(t,x)$ as a proposition. For further theory of local time, we refer to \cite[Chapter 3]{I.Kara.S.E.Shre.1991}.
  \begin{prop}\label{localtime}
  Let $X_t=x_0+M_t$ be a continuous local martingale on some probability space $(\Omega,\mathcal{F},\mathbb{P})$, where $X_0=x_0\in\mathbb{R}$, and $(M_t)_{t\geq 0}$ with $M_0=0$ is adapted to a filtration $(\mathcal{F}_t)_{t\geq 0}$. Then there exists an a.s. unique process $\Lambda_{x_0}(t,x)$, which is called the local martingale local time, defined on $\mathbb{R}^+\times\mathbb{R}\times \Omega$, such that the following holds:
  \begin{enumerate}
  \item[\rm(i)] For all $(t,x,\omega)\in \mathbb{R}^+\times\mathbb{R}\times \Omega$, $\Lambda_{x_0}(t,x)(\omega)$ is nonnegative.
  \item[\rm(ii)] For every fixed $x\in\mathbb{R}$, $\Lambda_{x_0}(0,x)=0$,  $\Lambda_{x_0}(t,x)$ is continuous and nondecreasing in $t$, and
   \begin{equation}
   \int_{0}^{\infty} 1_{\mathbb{R}\backslash\{x\}} (X_t)\Lambda_{x_0}(dt,x)=0,~~~ {\rm for} ~\mathbb{P}\!-\!a.e.~\omega\in\Omega.
   \end{equation}
  \item[\rm(iii)] $($Tanaka-Meyer formula$)$ For every fixed $x\in\mathbb{R}$,
  \begin{equation}
  |X_t-x|=|X_0-x|+\int_{0}^{t} {\rm sgn}(X_s-x)dM_s + 2\Lambda_{x_0}(t,x),
  \end{equation}
  where ${\rm sgn}(x)$ is the sign function.
  \item[\rm(iv)] If $X_t$ is a Brownian motion $B_t$ with $B_0=x_0$ and $\mathbb{E}[B_t^2]=\sigma^2t$ (usually we use the notation $L_{x_0}(t,x)$ instead of $\Lambda_{x_0}(t,x)$ in this case), then for every measurable function $f:\mathbb{R}\rightarrow [0,\infty)$, we have that a.s.,
  $$\sigma^2\int_{0}^{t}f(B_s)ds=2\int_{-\infty}^{\infty}f(x)L_{x_0}(t,x)dx,$$
  and $L_{x_0}(t,x)$ is continuous in $(t,x)$.
  \end{enumerate}
  \end{prop}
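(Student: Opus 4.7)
The plan is to follow the classical Tanaka-regularization construction of local time for a continuous semimartingale, specialized here to the purely local-martingale case $X_t = x_0 + M_t$. The obstruction is that $y \mapsto |y - x|$ is not $C^2$, so Itô's formula does not apply directly; the standard fix is to approximate by a sequence of $C^2$ convex functions $\varphi_\epsilon(u) \to |u|$, for instance $\varphi_\epsilon(u) = \sqrt{u^2+\epsilon^2} - \epsilon$, whose derivatives satisfy $\varphi_\epsilon' \to \mathrm{sgn}$ pointwise off $0$ and whose second derivatives $\varphi_\epsilon''$ are nonnegative approximate identities at $0$. Applying Itô to $\varphi_\epsilon(X_t-x)$ gives
\[
\varphi_\epsilon(X_t-x) = \varphi_\epsilon(X_0-x) + \int_0^t \varphi_\epsilon'(X_s-x)\, dM_s + \tfrac12 \int_0^t \varphi_\epsilon''(X_s-x)\, d\langle M\rangle_s.
\]
As $\epsilon\downarrow 0$ the left-hand side converges to $|X_t-x|$ pathwise and the stochastic integral converges in probability (uniformly on compacts) to $\int_0^t \mathrm{sgn}(X_s-x)\, dM_s$ by dominated convergence for stochastic integrals. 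The remaining term is nonnegative and, up to a negligible null set, monotone in the approximation, so it has a well-defined limit which I take as the \emph{definition} $2\Lambda_{x_0}(t,x)$. This produces (iii) by construction and simultaneously (i) and the $t$-continuity/monotonicity and $\Lambda_{x_0}(0,x)=0$ parts of (ii). Almost sure uniqueness is automatic: any other process satisfying the Tanaka--Meyer identity must coincide with $2\Lambda_{x_0}(t,x)$ because the other two summands are themselves determined a.s.

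To obtain the support property in (ii), I would show that on any open set $U\subset\mathbb{R}\setminus\{x\}$ the mass $\int_0^t \mathbf{1}_U(X_s)\,\Lambda_{x_0}(ds,x)$ vanishes. This is most cleanly done by replacing $\varphi_\epsilon$ in the Itô step above with a suitable smooth test function supported on $U$: for every $\epsilon$ small enough $\varphi_\epsilon''$ vanishes on $U$, so the corresponding $d\langle M\rangle_s$-integral picks up no contribution from times when $X_s\in U$. Taking a countable base of such $U$ exhausting $\mathbb{R}\setminus\{x\}$ gives the desired identity almost surely.

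For (iv) I would specialize $M_t = B_t - x_0$ with $\langle M\rangle_t = \sigma^2 t$. The occupation-time identity follows from the Tanaka--Meyer formula by a linearity-plus-monotone-class argument: for an indicator $f=\mathbf{1}_{[a,b]}$ one writes $f$ as a distributional second derivative of $\tfrac12(|y-a|-|y-b|)$ combined with affine pieces, so two applications of (iii) collapse the drift-type term into $L_{x_0}(t,b)-L_{x_0}(t,a)$, whence $\sigma^2\int_0^t f(B_s)\, ds = 2\int f(x)L_{x_0}(t,x)\, dx$; extension to general $f\geq 0$ is routine monotone convergence combined with Fubini. Joint continuity of $L_{x_0}(t,x)$ is the Trotter theorem. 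I would prove it via a Kolmogorov criterion applied to the Tanaka-based representation
\[
2\bigl(L_{x_0}(t,x)-L_{x_0}(t,y)\bigr) = |B_t-x|-|B_t-y| - \int_0^t \bigl[\mathrm{sgn}(B_s-x)-\mathrm{sgn}(B_s-y)\bigr]\, dB_s,
\]
combined with the BDG inequality. The quadratic variation of the martingale piece is $\sigma^2\int_0^t \mathbf{1}_{(x\wedge y,\, x\vee y)}(B_s)\,ds$, which admits a bound of order $|x-y|^{1-\delta}$ uniformly in $t$ on compacts by using the local boundedness of the Brownian transition density and Hölder's inequality.

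The main obstacle is precisely this last step: getting sharp enough moments $\mathbb{E}|L_{x_0}(t,x)-L_{x_0}(s,y)|^{2p} \leq C_p (|t-s|+|x-y|)^{1+\eta}$ to apply Kolmogorov's criterion requires iterating BDG (or using the Garsia-type estimates for the occupation density) and uniformly controlling the Brownian occupation of thin strips $(x\wedge y,x\vee y)$. Since the statement merely records classical facts from \cite[Chapter 3]{I.Kara.S.E.Shre.1991}, I would in the actual paper compress the above to a short derivation of the local-martingale version together with a direct citation, rather than redo the full Trotter argument in detail.
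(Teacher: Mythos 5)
The paper offers no proof of this proposition: it is recorded as a collection of classical facts about semimartingale local time and the reader is referred to Karatzas and Shreve, Chapter~3. Your sketch reconstructs precisely the Tanaka regularization argument that the cited source carries out, so there is no competing route in the paper to compare against---your outline is essentially the citation unpacked, and you correctly say at the end that in practice one would compress this to a citation.

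Two small slips are nevertheless worth flagging. First, for the support property in (ii) you propose to argue that ``for every $\epsilon$ small enough $\varphi_\epsilon''$ vanishes on $U$,'' but for the family $\varphi_\epsilon(u)=\sqrt{u^2+\epsilon^2}-\epsilon$ you introduced at the outset one has $\varphi_\epsilon''(u)=\epsilon^2(u^2+\epsilon^2)^{-3/2}>0$ for \emph{every} $u\in\mathbb{R}$, so the second derivative never vanishes on any open set; the argument as stated does not run. It is easily repaired by switching to a family $\psi_\epsilon$ equal to $|u|$ for $|u|\geq\epsilon$ and smoothed (e.g.\ by a quadratic cap) on $[-\epsilon,\epsilon]$, so that $\psi_\epsilon''$ is supported in $[-\epsilon,\epsilon]$; then the $d\langle M\rangle$-term genuinely receives no contribution from times at which $X_s$ is at distance at least $\epsilon$ from $x$, and letting $\epsilon\downarrow 0$ gives the support statement. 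Second, the claim that the $\tfrac12\int_0^t\varphi_\epsilon''(X_s-x)\,d\langle M\rangle_s$ term is ``monotone in the approximation'' is not correct for your $\varphi_\epsilon$ (the kernels $\varphi_\epsilon''$ are not ordered in $\epsilon$); the clean way to get convergence of that term is simply that the other two summands in the It\^o identity converge (pathwise for $\varphi_\epsilon(X_t-x)$, in ucp for the stochastic integral by bounded convergence), which forces the remainder to converge as well. These are bookkeeping issues rather than conceptual gaps and are exactly what the cited chapter of Karatzas--Shreve takes care of, including the Trotter continuity in (iv).
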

  
  Later, when the starting point of $X_t$ is clear, we will abbreviate the notation $\Lambda_{x_0}(t,x)$ by $\Lambda(t,x)$.

  Now we consider the following SDEs:
  \begin{equation} \label{spde}
  \begin{array}{rcl}
    dX^1_t & = & 1_{ \{X^1_t\neq X^2_t\}}dB^1_t+1_{\{X^1_t=X^2_t\}}dB^3_t+\beta dt, \\
    dX^2_t & = & 1_{ \{X^1_t\neq X^2_t\}}dB^2_t+1_{\{X^1_t=X^2_t\}}dB^3_t+\beta dt, \\
    1_{\{X^1_t= X^2_t\}}dt & = & 2\nu \Lambda(dt,0),
  \end{array}
  \end{equation}
  with initial condition $X^1_0=x_1$ and $X^2_0=x_2$.
  Here $\{B^i_t;~i=1,2,3\}$ are independent standard Brownian motions starting form the origin, $\nu$ is a constant parameter (later we will see that $\nu$ coincides with the stickiness parameter given in Definition \ref{2pointmotion}), and  $\Lambda(t,x)$ is the local time of the difference process $X^1_t-X^2_t$. Note that from the first two equations, $X^1_t-X^2_t$ must be a local martingale, which leads to the existence of $\Lambda(t,x)$ by Proposition \ref{localtime}. In particular, $\Lambda(t,0)$ is continuous and nondecreasing in $t$ for a.e. $\!\omega$. Consequently, it induces a  measure on $\mathbb{R}^+$ and the third equation of (\ref{spde}) is meaningful.
  
  The SDEs (\ref{spde}) gives a representation of the Howitt-Warren 2-point motion as stated in the following theorem.
  \begin{thm} \label{thmsde}
  The SDEs \eqref{spde} is well posed, i.e., for every initial condition $(x_1,x_2)\in\mathbb{R}^2$, \eqref{spde} admits a weak solution which is unique in law. Furthermore, any Howitt-Warren $2$-point motion $(X_t^1,X_t^2)$ is a solution of \eqref{spde}, and vice versa.
  \end{thm}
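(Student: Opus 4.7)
My plan is to split the argument into three parts: (a) verify that any weak solution of \eqref{spde} satisfies Definition \ref{2pointmotion}, and so is a Howitt-Warren $2$-point motion; (b) conversely, show that a Howitt-Warren $2$-point motion can be realized as a weak solution of \eqref{spde} by constructing the driving Brownian motions $B^1, B^2, B^3$ on a suitably enlarged probability space; (c) deduce uniqueness in law of \eqref{spde} from uniqueness in law of the Howitt-Warren $2$-point motion, which is a consequence of the well-posedness of the Howitt-Warren martingale problem in \cite{howitt2009consistent}. Existence of a weak solution of \eqref{spde} then follows from (b) combined with the existence part of the martingale problem.

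For (a), from the first two lines of \eqref{spde} one computes $d\langle X^i\rangle_t = (1_{\{X^1\neq X^2\}} + 1_{\{X^1=X^2\}})\,dt = dt$, so by L\'evy's characterization each $X^i-\beta t$ is a standard Brownian motion, while the shared $dB^3_t$ term together with the independence of $B^1, B^2, B^3$ gives $d\langle X^1,X^2\rangle_t = 1_{\{X^1=X^2\}}\,dt$. Setting $D_t := X^1_t - X^2_t$, the Tanaka-Meyer formula (Proposition \ref{localtime}(iii)) yields $|D_t| = |D_0| + \int_0^t \operatorname{sgn}(D_s)\,dD_s + 2\Lambda(t,0)$; combining this with the third line of \eqref{spde} gives
\[
\nu|D_t| - \int_0^t 1_{\{X^1=X^2\}}\,ds \;=\; \nu|D_0| + \nu\int_0^t \operatorname{sgn}(D_s)\,dD_s,
\]
a continuous local martingale whose quadratic variation grows at most linearly, hence a true martingale. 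Thus Definition \ref{2pointmotion} is satisfied.

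For (b), let $(X^1,X^2)$ be a Howitt-Warren $2$-point motion and set $M^i_t := X^i_t - \beta t$, so that $M^1, M^2$ are standard Brownian motions with $\langle M^1,M^2\rangle_t = \int_0^t 1_{\{X^1=X^2\}}\,ds$. On an enlarged probability space supporting three independent standard Brownian motions $\tilde B^1, \tilde B^2, \tilde B^3$ independent of $(X^1,X^2)$, define
\begin{align*}
B^1_t &:= \int_0^t 1_{\{X^1\neq X^2\}}\,dM^1_s + \int_0^t 1_{\{X^1=X^2\}}\,d\tilde B^1_s, \\
B^2_t &:= \int_0^t 1_{\{X^1\neq X^2\}}\,dM^2_s + \int_0^t 1_{\{X^1=X^2\}}\,d\tilde B^2_s, \\
B^3_t &:= \int_0^t 1_{\{X^1=X^2\}}\,dM^1_s + \int_0^t 1_{\{X^1\neq X^2\}}\,d\tilde B^3_s.
\end{align*}
Direct computation of quadratic and cross-variations shows that $B^1, B^2, B^3$ are mutually independent standard Brownian motions, and plugging in reproduces $dM^i_t = 1_{\{X^1\neq X^2\}}\,dB^i_t + 1_{\{X^1=X^2\}}\,dB^3_t$ for $i=1,2$. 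The third equation of \eqref{spde} is forced by the martingale condition in Definition \ref{2pointmotion}: applying Tanaka-Meyer to $D_t$ and subtracting the martingale $\nu\int_0^t\operatorname{sgn}(D_s)\,dD_s$ identifies $2\nu\Lambda(t,0) - \int_0^t 1_{\{X^1=X^2\}}\,ds$ as a continuous finite-variation local martingale starting at $0$, hence identically $0$.

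The main obstacle lies in part (b), specifically in verifying that the stochastic-integral blocks glue into genuinely independent standard Brownian motions. The cross-variations between $M^i$ and $\tilde B^j$ vanish because the auxiliary noise is independent of $(X^1,X^2)$, while the nontrivial covariation of $M^1$ and $M^2$ is exactly annihilated by the orthogonal indicators $1_{\{X^1\neq X^2\}}$ and $1_{\{X^1=X^2\}}$; the bookkeeping of these cases, and the verification that each $B^i$ has quadratic variation $t$, is where care is required, but no essential new idea is needed beyond L\'evy's characterization.
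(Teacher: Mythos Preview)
Your proof is correct, but the route differs from the paper's in one substantive respect: how a weak solution of \eqref{spde} is exhibited. The paper (Lemma~\ref{sdeexistence}) builds the solution directly by a time-change construction: starting from independent Brownian motions $\hat B^1,\hat B^2,\hat B^3$, it sets $W_t=\hat B^1_t-\hat B^2_t+x_1-x_2$, defines $A_t=t+2\nu L(t,0)$ and its inverse $T_t$, and puts $X^i_t=x_i+\hat B^i_{T_t}+\hat B^3_{t-T_t}+\beta t$. Your part (b) instead takes the existence of the Howitt-Warren $2$-point motion from \cite{howitt2009consistent} as input and reverse-engineers the driving noises $B^1,B^2,B^3$ on an enlarged space; this is cleaner for the bare equivalence statement and avoids the time-change machinery. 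One point you leave implicit deserves a line: for $i=2$ the identity $1_{\{X^1\neq X^2\}}\,dB^2+1_{\{X^1=X^2\}}\,dB^3=dM^2$ requires $1_{\{X^1=X^2\}}\,d(M^1-M^2)=0$, which holds because $\langle M^1-M^2\rangle_t=2\int_0^t 1_{\{X^1\neq X^2\}}\,ds$, so the integrand and the quadratic-variation density have disjoint supports. The trade-off is that the paper's explicit time-change representation $X^1_t-X^2_t=W_{T_t}$ is reused repeatedly later (Lemmas~\ref{lem-guji1} and~\ref{lem-guji2}) to compute the law of $\Lambda(t,0)$ and control hitting probabilities of the sticky Brownian motion; your argument proves Theorem~\ref{thmsde} but does not furnish that tool. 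Your parts (a) and (c) match the paper's Lemma~2.4 and its concluding appeal to uniqueness of the Howitt-Warren martingale problem essentially verbatim.
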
  
  We prove this theorem by the following lemmas.

  \begin{lem}\label{sdeexistence}
  Given initial condition $X^1_0=x_1$, $X^2_0=x_2$ for any $x_1,x_2\in\mathbb{R}$, the SDEs \eqref{spde} has a weak solution, that is, there is a quintuple $(X^1,X^2,B^1,B^2,B^3)$ and a filtration $\{\mathcal{F}_t\}_{t\geq 0}$ such that the quintuple is adapted to $\{\mathcal{F}_t\}_{t\geq 0}$, $B^1,B^2,B^3$ are independent Brownian motions and $(X^1,X^2)$ satisfies \eqref{spde} in It$\hat{o}$-integral form.
  \end{lem}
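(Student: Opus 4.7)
The plan is to build a weak solution by first constructing the difference process $V_t := X^1_t - X^2_t$ as a sticky Brownian motion, and then recovering the individual coordinates and the driving Brownian motions on an enlarged probability space. The key observation is that if $(X^1,X^2)$ solves \eqref{spde}, then the first two equations force $dV_t = 1_{\{V_t\ne 0\}}(dB^1_t - dB^2_t)$, so $V$ is a continuous local martingale with $d\langle V\rangle_t = 2\cdot 1_{\{V_t\ne 0\}}\,dt$, while the third equation asserts $1_{\{V_t=0\}}\,dt = 2\nu\,\Lambda(dt,0)$ where $\Lambda$ is the local time of $V$ at $0$. This is exactly the characterization of a sticky Brownian motion with diffusion coefficient $\sqrt{2}$ and stickiness parameter $1/(2\nu)$, so the task reduces to producing such a $V$ and then fabricating the drivers.

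For the first step I would use the standard time-change recipe: start from a Brownian motion $\tilde W$ of variance $2$ issued from $y_0 := x_1-x_2$, let $L(t)$ denote its semimartingale local time at $0$, form the continuous strictly increasing additive functional $A_t := t + 2\nu L(t)$, let $\tau := A^{-1}$, and set $V_s := \tilde W(\tau(s))$. Using the change of variable $du = dt + 2\nu\,dL_t$ together with the fact that $dL_t$ is carried by $\{\tilde W=0\}$, one gets $\int_0^s 1_{\{V_u=0\}}\,du = 2\nu\,L(\tau(s))$ and $\langle V\rangle_s = 2\tau(s) = \int_0^s 2\cdot 1_{\{V_u\ne 0\}}\,du$. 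Comparing Tanaka's formula for $|V_s|$ with the Tanaka formula for $|\tilde W(\tau(s))|$ gives $\Lambda(s,0) = L(\tau(s))$, which converts the first identity into the required sticky relation $1_{\{V_u=0\}}\,du = 2\nu\,\Lambda(du,0)$.

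For the second step, apply the martingale representation theorem (on an enlargement of the probability space if needed) to obtain a standard Brownian motion $W$ with
\[V_t - y_0 \;=\; \int_0^t \sqrt{2}\cdot 1_{\{V_s\ne 0\}}\,dW_s.\]
Enlarge the space once more to carry independent standard Brownian motions $\bar W$ and $B^3$ (independent also of $W$), and set
\[B^1 := (W+\bar W)/\sqrt{2}, \qquad B^2 := (\bar W-W)/\sqrt{2}.\]
Then $(B^1,B^2,B^3)$ is a triple of independent standard Brownian motions with $dB^1 - dB^2 = \sqrt{2}\,dW$. Defining
\[X^i_t \;:=\; x_i + \int_0^t 1_{\{V_s\ne 0\}}\,dB^i_s + \int_0^t 1_{\{V_s=0\}}\,dB^3_s + \beta t \qquad (i=1,2)\]
yields $X^1 - X^2 = V$ automatically; the first two equations of \eqref{spde} are built into the definition, and the third is Step 1.

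The principal technical obstacle sits in the first step: one must verify the sticky relation in terms of $V$'s \emph{own} local time $\Lambda$, which requires tracing carefully how local times transform under the time change and pinning down the constant $2\nu$ in the definition of $A_t$ against the normalization of $\Lambda$ fixed by Proposition \ref{localtime}. This is classical but easy to mis-handle because the off-diagonal diffusion coefficient $\sqrt{2}$ introduces a factor that propagates through the local-time normalization. A secondary point is that the martingale representation in Step 2 must be realized while preserving the independence of $\bar W$ and $B^3$ from $W$; this is handled by working throughout on a sufficiently rich product probability space.
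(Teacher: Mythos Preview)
Your approach is correct and shares the paper's core construction---building the sticky Brownian motion $V$ as a time-change of a variance-$2$ Brownian motion via $A_t=t+2\nu L(t,0)$ and its inverse $\tau$---but differs in how the driving Brownian motions are manufactured. The paper constructs the individual coordinates directly as $X^i_t=x_i+\hat B^i_{T_t}+\hat B^3_{S_t}+\beta t$ from six auxiliary Brownian motions (so that $X^1-X^2=W_{T_t}$ is the time-changed difference $\hat B^1-\hat B^2+y_0$), and then recovers full Brownian drivers $B^i$ by filling in the ``dead time'' with independent increments $\tilde B^i$. You instead extract a single Brownian driver $W$ for $V$ via a martingale representation, rotate it with an independent $\bar W$ to produce $B^1,B^2$, and only then define $X^i$ from the SDE itself. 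Both routes hinge on the same local-time identity $\Lambda(s,0)=L(\tau(s),0)$ and the same verification that $\int_0^s 1_{\{V_u=0\}}\,du=2\nu L(\tau(s),0)$, so neither is materially shorter; the paper's version is a shade more explicit and avoids invoking a representation theorem, while yours makes the role of $V$ as the sole source of nontrivial interaction more transparent.
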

  \begin{proof}
  Let $\{\tilde{B}_t^i,\hat{B}_t^i;~i=1,2,3\}$ be independent standard Brownian motions starting from $0$ and $\{\mathcal{F}_t\}_{t\geq 0}$ be the filtration generated by these Brownian motions. Define $W_t:=x_1+ \hat{B}_t^1-x_2-\hat{B}_t^2$, then $W_t$ is a Brownian motion and let $L(t,x)$ denote the local time of $W_t$. 
  
  Set $A_t:=t+2\nu L(t,0)$, then $A_t$ is a strictly increasing and continuous function and $A_t\geq t$. Therefore, we can define the inverse function of $A_t$ by $T_t:=A_t^{-1}$ and define also $S_t:=t-T_t$. Now let
  \begin{eqnarray}
  X_t^i:=x_i+\hat{B}_{T_t}^i+ \hat{B}_{S_t}^3+\beta t,~~~~~~i=1,2.
  \end{eqnarray}
  Define then
  \begin{eqnarray}
  \begin{array}{rcl}
  B_t^1&:=&\hat{B}_{T_t}^1+\int_{0}^{t}1_{\{X_s^1= X_s^2\}} d\tilde{B}_s^1, \\
  B_t^2&:=&\hat{B}_{T_t}^2+\int_{0}^{t}1_{\{X_s^1= X_s^2\}} d\tilde{B}_s^2, \\
  B_t^3&:=&\hat{B}_{S_t}^3+\int_{0}^{t}1_{\{X_s^1 \neq X_s^2\}} d\tilde{B}_s^3.
  \end{array}
  \end{eqnarray}
  We claim that the quintuple $(X_t^1,X_t^2,B_t^1,B_t^2,B_t^3)$ together with the filtration $(\mathcal{F}_t)_{t\geq 0}$ is a weak solution of (\ref{spde}).

  To see this, first we note that the quintuple is adapted to $\{\mathcal{F}_t\}_{t\geq 0}$. Next we are going to prove that $B^1,B^2,B^3$ are independent Brownian motions by L\'evy's characterization of Brownian motion. Since
  \begin{equation*}
  \mathbb{E}\left[(\int_{0}^{T_t}1_{\{W_{s}=0\}}d\hat{B}_s^1 )^2\right] = \mathbb{E}\left[\int_{0}^{T_t}1_{\{W_{s}=0\}}ds\right] =0,
  \end{equation*}
  so $\int_{0}^{T_t}1_{\{W_{s}=0\}}d\hat{B}_s^1=0$ a.s..
  Combining this with the fact that $W_{T_t}=0$ if and only if $X_t^1=X_t^2$, we have a.s.,
  \begin{equation}
  \hat{B}_{T_t}^1=\int_{0}^{T_t}1_{\{W_{s}\neq 0\}}d\hat{B}_s^1= \int_{0}^{t}1_{\{W_{T_s\neq 0}\}}d\hat{B}_{T_s}^1 = \int_{0}^{t}1_{\{ X_s^1\neq X_s^2\}}d\hat{B}_{T_s}^1,
  \end{equation}
  and the quadratic variation
  \begin{eqnarray}\nonumber
  \langle \hat{B}_{T}^1 \rangle_t & =& T_t = \int_{0}^{T_t} 1_{\{W_{s}\neq 0\}}ds =      \int_{0}^{T_t} 1_{\{W_{s}\neq 0\}} \Big(ds+2\nu L(ds,0)\Big) \\
   & = &\int_{0}^{T_t} 1_{\{W_{s}\neq 0\}}dA_s =\int_{0}^{t} 1_{\{W_{T_s}\neq 0\}}ds = \int_{0}^{t}1_{\{ X_s^1\neq X_s^2\}}ds,
  \end{eqnarray}
  where the third equality holds because of Proposition \ref{localtime} (ii). Hence by the independence of $\hat{B}^1$ and $\tilde{B}^1$, the quadratic variation of $B^1$ is given by
  \begin{equation}
  \langle B^1\rangle_t=T_t+\int_{0}^{t}1_{\{ X_s^1=X_s^2\}}ds=\int_{0}^{t}1ds=t.
  \end{equation}
  Notice that $B_t^1$ is a continuous martingale with respect to $(\mathcal{F}_t)_{t\geq 0}$, so by L\'evy's characterization $B_t^1$ is a Brownian motion. Similarly, $B_t^2$ is also a Brownian motion. As to $B_t^3$, we have
  \begin{equation}
  \langle B^3\rangle_t=S_t+\int_{0}^{t}1_{\{ X_s^1\neq X_s^2\}}ds=S_t+T_t=t,
  \end{equation}
  which implies that $B_t^3$ is also a Brownian motion. It is not difficult to see the independence of $B_t^1$, $B_t^2$ and $B_t^3$ since the covariation process of each two is zero.

  Moreover, by the construction of $X^1_t$, $X^2_t$, $B_t^1$, $B_t^2$ and $B_t^3$, for $i=1,2$,
  \begin{eqnarray}
  \nonumber X_t^i=\! x_i\!+\!\!\int_{0}^{t}1_{\{ X_s^1\neq X_s^2\}}d\hat{B}_{T_s}^i \!+\!\! \int_{0}^{t}1_{\{ X_s^1= X_s^2\}}d\hat{B}_{S_s}^3 \!+\!\beta t = \!x_i\! +\!\! \int_{0}^{t}1_{\{ X_s^1\neq X_s^2\}}dB_s^i \!+\!\! \int_{0}^{t}1_{\{ X_s^1= X_s^2\}}dB_s^3 \!+\!\beta t.
  \end{eqnarray}
  Thus $X_t^1$ and $X_t^2$ solve the first two equations of (\ref{spde}).

  It remains to show that  $(X_t^1,X_t^2)$ satisfies the third equation in (\ref{spde}). Since $X_t^1-X_t^2=W_{T_t}$, we have $\Lambda(t,x)=L(T_t,x)$. Observe that
  \begin{equation}
  \begin{split}
  \int_{0}^{t}1_{\{ X_s^1= X_s^2\}}ds & =\int_{0}^{t}1_{\{W_{T_s}=0\}}ds =\int_{0}^{T_t}1_{\{W_s=0\}}dA_s 
    = \int_{0}^{T_t}1_{\{W_s=0\}} \Big(ds+2\nu L(ds,0)\Big)  \\
   & = 2\nu \int_{0}^{T_t}1_{\{W_s=0\}}L(ds,0)  
    = 2\nu L(T_t,0) = 2\nu \Lambda(t,0).
  \end{split}
  \end{equation}

  Thus, $(X_t^1,X_t^2,B_t^1,B_t^2,B_t^3)$ solves the SDEs (\ref{spde}).
  \end{proof}
  
  Moreover, the weak solution of (\ref{spde}) is the Howitt-Warren 2-point motion.
  \begin{lem}
  Given initial condition $X^1_0=x_1$, $X^2_0=x_2$ for any $x_1,x_2\in \mathbb{R}$, the solution $(X^1_t,X^2_t)$ $($in weak sense$)$ solves the martingale problem for the Howitt-Warren $2$-point motion as defined in Definition \ref{2pointmotion}.
  \end{lem}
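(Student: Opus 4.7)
The plan is to verify directly the three defining properties of the Howitt-Warren $2$-point motion in Definition \ref{2pointmotion} from the SDEs \eqref{spde}, using L\'evy's characterization of Brownian motion for the marginals, a bilinear computation for the covariation, and Tanaka's formula combined with the third equation of \eqref{spde} for the stickiness martingale.

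First I would show that each $X^i$ is a Brownian motion with drift $\beta$. Setting $M^i_t := X^i_t - x_i - \beta t$, the first two equations of \eqref{spde} express $M^i$ as a sum of two stochastic integrals against independent Brownian motions, hence a continuous local martingale. The orthogonality of its two indicator-weighted components together with $1_{\{X^1_s \neq X^2_s\}} + 1_{\{X^1_s = X^2_s\}} = 1$ yields
\begin{equation*}
\langle M^i \rangle_t = \int_{0}^{t} 1_{\{X^1_s \neq X^2_s\}} ds + \int_{0}^{t} 1_{\{X^1_s = X^2_s\}} ds = t,
\end{equation*}
so $M^i$ is a standard Brownian motion by L\'evy's characterization. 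An analogous bilinear computation, in which the $(B^1, B^2)$ cross-term vanishes by independence and the two cross-terms pairing $B^3$ with $B^1$ or $B^2$ vanish because $1_{\{X^1_s \neq X^2_s\}}\, 1_{\{X^1_s = X^2_s\}} \equiv 0$, gives
\begin{equation*}
\langle X^1, X^2\rangle_t = \langle M^1, M^2\rangle_t = \int_{0}^{t} 1_{\{X^1_s = X^2_s\}} ds,
\end{equation*}
establishing the covariation condition.

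For the martingale characterizing the stickiness, let $Y_t := X^1_t - X^2_t$. Subtracting the first two equations of \eqref{spde} yields
\begin{equation*}
dY_t = 1_{\{X^1_t \neq X^2_t\}}(dB^1_t - dB^2_t),
\end{equation*}
so $Y$ is a continuous local martingale which, by Proposition \ref{localtime}, carries a local time $\Lambda(\cdot, 0)$ at the origin. Tanaka's formula then gives
\begin{equation*}
|Y_t| = |Y_0| + \int_{0}^{t} {\rm sgn}(Y_s)\, 1_{\{X^1_s \neq X^2_s\}}(dB^1_s - dB^2_s) + 2\Lambda(t, 0).
\end{equation*}
Multiplying by $\nu$ and substituting the third equation of \eqref{spde} in its integrated form $\int_0^t 1_{\{X^1_s = X^2_s\}} ds = 2\nu \Lambda(t, 0)$, the local-time term cancels exactly against the occupation integral, leaving
\begin{equation*}
\nu |X^1_t - X^2_t| - \int_{0}^{t} 1_{\{X^1_s = X^2_s\}} ds = \nu|x_1 - x_2| + \nu \int_{0}^{t} {\rm sgn}(Y_s)\, 1_{\{X^1_s \neq X^2_s\}} (dB^1_s - dB^2_s),
\end{equation*}
a continuous local martingale in the filtration generated by $\vec{X}$.

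The only real technical step is upgrading this local martingale to a genuine martingale, as required by Definition \ref{2pointmotion}. Since the integrand is bounded in absolute value by $1$, the stochastic integral is an $L^2$-bounded martingale on every compact time interval, so the upgrade is immediate. A small verification along the way is that the $\Lambda$ appearing in \eqref{spde} coincides with the semimartingale local time at $0$ of the difference process $Y$, which is exactly the content of Proposition \ref{localtime} applied to $Y$.
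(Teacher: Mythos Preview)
Your proof is correct and follows essentially the same approach as the paper: L\'evy's characterization for the marginals, and Tanaka--Meyer plus the third equation of \eqref{spde} for the stickiness martingale. You are in fact slightly more thorough than the paper, since you explicitly verify the covariation identity $\langle X^1,X^2\rangle_t=\int_0^t 1_{\{X^1_s=X^2_s\}}\,ds$ from Definition~\ref{2pointmotion} and explicitly upgrade the Tanaka local martingale to a true martingale via boundedness of the integrand, both of which the paper leaves implicit.
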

  \begin{proof}
  Suppose that $(X^1_t,X^2_t)$ is a solution of (\ref{spde}). Then
  $$X_t^1-x_1-\beta t= \int_{0}^{t} 1_{ \{X^1_s\neq X^2_s\}}dB^1_s+\int_{0}^{t} 1_{\{X^1_s=X^2_s\}}dB^3_s,$$
  where $B^1_t$ and $B^3_t$ are independent Brownian motions. It is easy to see that $X_t^1-\beta t$ is a continuous martingale and the quadratic variation is $t$. By L\'evy's characterization $X_t^1-\beta t$ is a Brownian motion, and so is $X_t^2-\beta t$.

  Furthermore, applying the Tanaka-Meyer formula to the martingale $X_t^1-X_t^2$, we have
  \begin{equation}
  |X_t^1-X_t^2|=|x_1-x_2|+\int_{0}^{t}{\rm sgn} (X_s^1-X_s^2)d(X_t^1-X_t^2)+2\Lambda(t,0).
  \end{equation}
  Since $\int_{0}^{t}{\rm sgn}(X_s^1-X_s^2)d(X_t^1-X_t^2)$ is a martingale and
  $2\Lambda(t,0)=\frac{1}{\nu}\int_{0}^{t} 1_{ \{X^1_s= X^2_s\}}ds$ by (\ref{spde}), we conclude that $\nu|X_t^1-X_t^2|-\int_{0}^{t} 1_{ \{X^1_s= X^2_s\}}ds$ is a martingale.

  Therefore, the solution $(X^1_t,X^2_t)$ solves the martingale problem for the $2$-point motion.
  \end{proof}
  Lastly, the uniqueness of the Howitt-Warren $2$-point motion follows from the uniqueness of the Howitt-Warren martingale problems shown in \cite{howitt2009consistent}. Therefore, the second statement of Theorem \ref{thmsde} holds, and the solution of (\ref{spde}) is unique. Thus, we have proved Theorem \ref{thmsde}.
  
  From now on, we will identify the Howitt-Warren $2$-point motion and the solution of (\ref{spde}), since we are only interested in the distribution of the 2-point motion.
  \end{subsection}
  
  \begin{subsection}{Local time preliminaries}
  In this subsection, we always let $(X_t^1,X_t^2)$ be a Howitt-Warren $2$-point motion starting from $(x_1,x_2)$, and $\nu$ be the stickiness parameter. We will derive the first moment of the local time $\Lambda_{x_1-x_2}(t,0)$ of the difference process $X_t^1-X_t^2$ at the origin ($X_t^1-X_t^2$ is indeed a sticky Brownian motion). This result will be used for several times in Section 2.3, 3 and 4. We will also estimate the probability of $X_t^1-X_t^2$ visiting the origin between two fixed times, which will be applied in the proof of Theorem \ref{GaussianlimitThm}.
  
  \begin{lem}\label{lem-guji1}
  For all $x_1,x_2\in\mathbb{R}$, we have $\mathbb{E}[\Lambda_{x_1-x_2}(t,0)]=O(t^{1/2})$, where $f(t)=O\big(g(t)\big)$ denotes that there exists a constant $C>0$ such that $f(t)\leq Cg(t)$ as $t\rightarrow\infty$. Moreover, if $x_1-x_2=0$, then
  \begin{equation}\label{2.13}
  \mathbb{E}[\Lambda_0(t,0)]=\sqrt{\frac{2}{\pi}} t^{1/2} + \left(2 e^{t/2\nu^2}\left[1-\Phi\Big(\frac{\sqrt{t}}{\nu}\Big)\right] - 1\right)\nu ,
  \end{equation}
  where $\Phi(x):=\int_{-\infty}^{x}\frac{1}{\sqrt{2\pi}}e^{-y^2/2}dy$.
  \end{lem}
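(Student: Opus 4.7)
My plan is to set $D_t := X^1_t - X^2_t$, which by (\ref{spde}) is a continuous local martingale with $\langle D\rangle_t = 2\int_0^t \mathbf{1}_{\{D_s\neq 0\}}\,ds \le 2t$ and initial value $x_1 - x_2$, while the third equation of (\ref{spde}) supplies the key identity $\int_0^t \mathbf{1}_{\{D_s=0\}}\,ds = 2\nu\,\Lambda_{x_1-x_2}(t,0)$. Applying Tanaka--Meyer (Proposition \ref{localtime}(iii)) at $0$ and taking expectations---the stochastic integral is a genuine martingale because $\langle D\rangle_t \le 2t$---I would first obtain
\[
\mathbb{E}[|D_t|] \;=\; |x_1-x_2| + 2\,\mathbb{E}[\Lambda_{x_1-x_2}(t,0)],
\]
which reduces both statements of the lemma to the study of $\mathbb{E}[|D_t|]$ (or, via the third line of (\ref{spde}) once more, to that of $P(D_s=0)$).

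For the $O(t^{1/2})$ estimate I would apply the optional stopping theorem to the martingale $D_t^2 - \langle D\rangle_t$ to get $\mathbb{E}[D_t^2] \le (x_1-x_2)^2 + 2t$, and then conclude by Cauchy--Schwarz that
\[
\mathbb{E}[\Lambda_{x_1-x_2}(t,0)] \;\le\; \tfrac12\Bigl(\sqrt{(x_1-x_2)^2 + 2t} - |x_1-x_2|\Bigr) \;=\; O(t^{1/2}).
\]

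For the exact formula in the case $x_1 = x_2$, the identity above reads $\mathbb{E}[\Lambda_0(t,0)] = \frac{1}{2\nu}\int_0^t P(D_s = 0)\,ds$, so the task is to compute $P(D_s = 0)$. Here I would use the representation $D_s = W_{T_s}$ built in Lemma \ref{sdeexistence} (with $W$ Brownian, $\langle W\rangle_s = 2s$, and $T_s = A_s^{-1}$ for $A_u := u + 2\nu L^W(u,0)$), change variables first to $u = T_s$ and then to $\ell = L^W(u,0)$---whose right-inverse $\tau^W_\ell$ is a one-sided $\tfrac12$-stable subordinator with an explicitly known Laplace exponent---and thereby reduce the Laplace transform of $P(D_\cdot = 0)$ to
\[
\int_0^\infty e^{-\alpha s}\, P(D_s=0)\,ds \;=\; 2\nu \int_0^\infty e^{-2\alpha\nu\ell}\, \mathbb{E}\!\bigl[e^{-\alpha \tau^W_\ell}\bigr]\,d\ell,
\]
which evaluates to an expression of the form $1/\bigl[\sqrt\alpha\,(\sqrt\alpha + a)\bigr]$ for a suitable $a = a(\nu)>0$. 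I would then invert using the tabulated identity $\mathcal{L}^{-1}\bigl[1/(\sqrt\alpha(\sqrt\alpha+a))\bigr](s) = e^{a^2 s}\mathrm{erfc}(a\sqrt s)$ to obtain $P(D_s=0)$ in closed form, and finally integrate from $0$ to $t$ by the substitution $u = a\sqrt s$ together with the elementary antiderivative
\[
\int u\,e^{u^2}\mathrm{erfc}(u)\,du \;=\; \tfrac12 e^{u^2}\mathrm{erfc}(u) + \frac{u}{\sqrt\pi}.
\]
Rewriting $\mathrm{erfc}$ in terms of $\Phi$ via $\mathrm{erfc}(x) = 2(1-\Phi(x\sqrt 2))$ then produces the displayed formula.

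The only genuinely delicate point will be the bookkeeping of normalization constants: because $\langle W\rangle_s = 2s$ and the semimartingale local time in Proposition \ref{localtime} carries a factor of $2$ in the occupation density formula, the Laplace exponent of $\tau^W_\ell$ and the resulting $a(\nu)$ must be tracked carefully in order to match the coefficients $\sqrt{2/\pi}$, $1/(2\nu^2)$ and $\sqrt t/\nu$ in the stated expression; once those constants are pinned down, the remaining arguments (Tanaka, optional stopping, Laplace inversion, and the final elementary integration) are entirely standard.
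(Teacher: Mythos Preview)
Your proposal is correct, and both parts take a genuinely different route from the paper.

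For the $O(t^{1/2})$ bound, the paper does not use Tanaka and Cauchy--Schwarz. Instead it derives the full distribution of $\Lambda_{x_1-x_2}(t,0)$ by the time-change identity $\Lambda(t,0)=L(T_t,0)$ together with $\Lambda^{-1}(u)=L^{-1}(u)+2\nu u$, reducing everything to the explicit law of Brownian local time; it then observes the stochastic domination $\mathbb{P}(\Lambda_{x_1-x_2}(t,0)>u)\le \mathbb{P}(\Lambda_0(t,0)>u)$ and bounds the mean by the case $x_1=x_2$. Your Tanaka/Cauchy--Schwarz argument is shorter and conceptually cleaner, and even gives the uniform constant $\mathbb{E}[\Lambda_{x_1-x_2}(t,0)]\le \tfrac12\sqrt{2t}$; the price is that you do not obtain the distribution of $\Lambda(t,0)$, which the paper's method yields for free and which is used immediately afterwards in Lemma~\ref{lem-guji2}.

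For the exact formula at $x_1=x_2$, the paper computes $\mathbb{E}[\Lambda_0(t,0)]=\int_0^\infty \mathbb{P}(\Lambda_0(t,0)>u)\,du$ directly from the explicit distribution, via a real-variable change of order of integration and an integration by parts. Your approach---rewriting the mean as $\tfrac{1}{2\nu}\int_0^t P(D_s=0)\,ds$, computing the Laplace transform through the stable subordinator $\tau^W_\ell$, and inverting---is also correct (with $a=1/(\nu\sqrt 2)$ the constants line up exactly) and arguably more systematic, but it relies on a tabulated Laplace inversion, whereas the paper's computation stays entirely within elementary calculus. Either way the bookkeeping you flag is indeed the only delicate point.
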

  \begin{proof}
  We follow the notations in Lemma \ref{sdeexistence}, and first derive the distribution of the local time $\Lambda(t,0)$. In the proof of Lemma \ref{sdeexistence}, we have the relation $X_t^1-X_t^2 =W_{T_t}$, where $W_t$ is a Brownian motion with $\mathbb{E}[W^2_t]=2t$ starting from $x_1-x_2$ and $T_t$ is the time change defined in Lemma \ref{sdeexistence}. For the local time $\Lambda$ (resp. $L$) of $X^1-X^2$ (resp. $W$), the equation $\Lambda(t,x)=L(T_t,x)$ holds and in particular $\Lambda(t,0)=L(T_t,0)$. Temporarily we use the notation $\Lambda(t)$ (resp. $L(t)$) to denote $\Lambda(t,0)$ (resp. $L(t,0)$) in the left of this paragraph, and define the left inverse $L^{-1}(u):=\inf\{t:L(t)>u\}$ (same for $\Lambda^{-1}(u)$). Then $L\big(L^{-1}(u)\big)=u$ and $L(t)\leq u$ if and only if $L^{-1}(u)\geq t$. Since $\Lambda(t)=L\big(T(t)\big)$ and $T_t=A^{-1}_t$ is a continuous and strictly increasing function where $A_t=t+2\nu L(t)$,
  \begin{equation}
  \Lambda^{-1}(u)=T^{-1}\big(L^{-1}(u)\big)=A\big(L^{-1}(u)\big)=L^{-1}(u)+2\nu L\big(L^{-1}(u)\big)= L^{-1}(u) + 2\nu u.
  \end{equation}
  Thereby we have
  \begin{equation}
  \mathbb{P}(\Lambda(t)\leq u)=\mathbb{P}(\Lambda^{-1}(u)\geq t)=\mathbb{P}(L^{-1}(u)\geq t-2\nu u),
  \end{equation}
  which is equivalent to
  \begin{equation}\label{2.3A}
  \mathbb{P}(\Lambda(t,0)> u)= \left\{
  \begin{array}{lr}
  \mathbb{P}(L(t-2\nu u,0)> u), & u\leq \frac{t}{2\nu u}; \\
  0, & u> \frac{t}{2\nu u}.
  \end{array}
  \right.
  \end{equation}
  The distribution of $L(t,0)$ is standard, which is given by
  \begin{eqnarray}\label{2.3B}
  && \mathbb{P}(L(t,0)=0)=2 \Phi\Big(\frac{|x_1-x_2|}{2\sqrt{t}}\Big)-1,\\
  \label{2.3C}&&\mathbb{P}(L(t,0)>u)=2-2\Phi\Big(\frac{|x_1-x_2|+2u}{2\sqrt{t}}\Big), ~~~~{\rm for~all~}u\geq 0.
  \end{eqnarray}
  In fact, since $L(t,0)$ is the solution of a Skorohod equation (see Lemma 6.14 in \cite[Chapter 3]{I.Kara.S.E.Shre.1991}), the distribution of $L$ can be easily derived by solving the Skorohod equation.
  
  We then consider the expectation $\mathbb{E}\left[\Lambda(t,0)\right]$.
  When $x_1-x_2=0$, for $t>0$,
    \begin{align}
    \mathbb{P}(\Lambda_0(t,0)=0)&=\mathbb{P}(L_0(t,0)=0)=0 \\
    \label{2B}
    \mathbb{P}(\Lambda_0(t,0)>u)&= \left\{
    \begin{array}{lr}
    \mathbb{P}(L_0(t-2\nu u,0)>u)=2-2\Phi(\frac{u}{\sqrt{t-2\nu u}}),& t>2\nu u>0; \\
    0, &  2\nu u \geq t>0.
    \end{array}
    \right.
    \end{align}
  Therefore,
  \begin{equation} \label{meanlocaltime}
  \mathbb{E}\left[\Lambda_0(t,0)\right]=\int_{0}^{\infty}\mathbb{P}(\Lambda_0(t,0)>u)du=2\int_{0}^{\frac{t}{2\nu}} \int_{\frac{u}{\sqrt{t-2\nu u}}}^{\infty}\phi(y)dydu,
  \end{equation}
  where $\phi(y):=\frac{1}{\sqrt{2\pi}}e^{-\frac{y^2}{2}}$ is the Gaussian density. For the right-hand side of (\ref{meanlocaltime}), change the order of the integrals, use the substitution $ z=\sqrt{\nu ^2y^2+t}/\nu$ and apply integration by parts,
  \begin{eqnarray} \label{meanlocaltime'}
 \nonumber \mathbb{E}\left[\Lambda_0(t,0)\right] &
   = & 2\int_{0}^{\infty} (y\sqrt{\nu^2y^2+t}-\nu y^2) \phi(y)dy \\
 \nonumber& = & -\frac{2}{\sqrt{2\pi}}\int_{\frac{\sqrt{t}}{\nu}}^{\infty} \nu z~ de^{-\frac{z^2}{2}+\frac{t}{2\nu^2}} - 2\nu\int_{0}^{\infty}y^2\phi(y)dy \\
 \nonumber& = & \sqrt{2t/\pi} + 2\nu\int_{\sqrt{t}/\nu}^{\infty} \frac{1}{\sqrt{2\pi}}e^{-\frac{z^2}{2}+\frac{t}{2\nu^2}}dz - \nu \\
 & = & \sqrt{2/\pi}~ t^{1/2} + \left(2 e^{t/2\nu^2}\left[1-\Phi\big(\sqrt{t}/\nu\big)\right] - 1\right)\nu.
 \end{eqnarray}
 Note that $e^{x^2/2}\left(1-\Phi(x)\right)$ is a decreasing function when $x\geq 0$, so the term in the bracket of the last line in (\ref{meanlocaltime'}) is bounded by 3. Consequently, the order of $\mathbb{E}\left[\Lambda_0(t,0)\right]$ is $t^{1/2}$.
 
 Generally when $x_1-x_2\neq 0$, according to (\ref{2.3A})-(\ref{2.3C}), for all $u\geq0$,
   \begin{equation}\label{2H}
   \mathbb{P}\big(\Lambda_{x_1-x_2}(t,0)>u\big)\leq \mathbb{P}\big(\Lambda_0(t,0)>u\big).
   \end{equation}
 Hence,
 \begin{equation} \label{estimate_mean_local_time}
 \mathbb{E}[\Lambda_{x_1-x_2}(t,0)]\leq \mathbb{E}[\Lambda_0(t,0)]= O(t^{1/2}).
 \end{equation}
 This completes the proof.
 \end{proof}
 
 \begin{lem} \label{lem-guji2}
 Let $x_1-x_2=0$ and $\tilde{W}_t:=X_t^1-X_t^2$. For a fixed $t$, let $E_k:=\big\{\tilde{W}_s=0:$ {\rm for some} $s\in\big(kt,(k+1)t\big]\big\}$. Then for any $\alpha>0$,
 \begin{equation} \label{condAl}
 \sum_{k=0}^{n} \mathbb{P}(E_k)=o(n^{1/2+\alpha}),
 \end{equation}
 where $f(n)=o\big(g(n)\big)$ denotes that $f(n)/g(n)\rightarrow 0$ as $n\rightarrow\infty$. 
 \end{lem}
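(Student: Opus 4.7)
The plan is to view $\sum_{k=0}^n \mathbf{1}_{E_k}$ as the number of length-$t$ subintervals of $[0,(n+1)t]$ that meet the zero set of $\tilde{W}$, and to bound this count by combining the Lebesgue measure of that zero set (controlled by Lemma~\ref{lem-guji1}) with excursion-theoretic quantities for the underlying Brownian motion $\hat{W}$ of Lemma~\ref{sdeexistence}.

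I would begin with a pathwise combinatorial estimate. Set $A := \{u \in [0,(n+1)t] : \tilde{W}_u = 0\}$; by the third equation of \eqref{spde}, its Lebesgue measure equals $|A| = 2\nu\Lambda((n+1)t,0)$. Decompose the open set $[0,(n+1)t]\setminus A$ into its excursion intervals $(\sigma_i,\tau_i)$ of durations $D_i := \tau_i - \sigma_i$, and separate the long excursions ($D_i \geq 2t$, whose count I denote $N_{2t}$) from the short ones (of total duration $|S| := \sum_{D_i < 2t} D_i$). An interval $(kt,(k+1)t]$ misses $A$ iff it lies entirely inside some excursion, and any long excursion contains at least $\lfloor \tau_i/t\rfloor - \lceil \sigma_i/t\rceil \geq D_i/t - 2$ such internal intervals. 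Since $(n+1)t = |A| + \sum_i D_i$, this gives the deterministic bound
\[
\sum_{k=0}^{n}\mathbf{1}_{E_k} \;\le\; \frac{|A|}{t} \;+\; \frac{|S|}{t} \;+\; 2N_{2t}.
\]

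I would then estimate each term in expectation. Lemma~\ref{lem-guji1} immediately yields $\mathbb{E}[|A|] = 2\nu\mathbb{E}[\Lambda((n+1)t,0)] = O(\sqrt{n})$. For the two excursion-counting quantities, I would use the time change of Lemma~\ref{sdeexistence}: because $A_u = u + 2\nu L(u,0)$ is constant off the zero set of $\hat{W}$, the map $T = A^{-1}$ puts the excursions of $\tilde{W}$ (in sticky time) in bijection with those of $\hat{W}$ (in BM time), with equal durations. Itô's excursion theory for $\hat{W}$---a BM of variance $2$ with characteristic measure $\nu_{\hat{W}}(dr) \propto r^{-3/2}\,dr$ and total local time $L(T_{(n+1)t},0) = \Lambda((n+1)t,0)$---then gives, after one more application of Lemma~\ref{lem-guji1},
\[
\mathbb{E}[N_{2t}] \;=\; \mathbb{E}[\Lambda((n+1)t,0)]\cdot\nu_{\hat{W}}([2t,\infty)) \;=\; O\bigl(\sqrt{n/t}\bigr),
\qquad
\mathbb{E}[|S|] \;=\; \mathbb{E}[\Lambda((n+1)t,0)]\cdot\!\int_0^{2t}\!\! r\,\nu_{\hat{W}}(dr) \;=\; O(\sqrt{nt}).
\]
Summing the three contributions for fixed $t$ yields
\[
\sum_{k=0}^{n}\mathbb{P}(E_k) \;=\; \mathbb{E}\Bigl[\textstyle\sum_k \mathbf{1}_{E_k}\Bigr] \;=\; O(\sqrt{n}) \;=\; o(n^{1/2+\alpha}) \qquad(\forall\,\alpha>0).
\]

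The main technical obstacle is the transfer of Itô's excursion theory from $\hat{W}$ to $\tilde{W}$ through the time change: one must verify the duration-preserving bijection of excursions and track the factor $\sqrt{2}$ relating the local time $L$ of the variance-$2$ BM $\hat{W}$ to that of a standard Brownian motion, which fixes the correct normalization of $\nu_{\hat{W}}$. Once this is in place, the combinatorial bound above together with Lemma~\ref{lem-guji1} closes the argument, and the slack in $O(\sqrt{n})$ versus the required $o(n^{1/2+\alpha})$ absorbs any sub-polynomial logarithmic factors that might appear in a more careful accounting.
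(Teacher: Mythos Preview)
Your argument is correct and takes a genuinely different route from the paper. The paper bounds each $\mathbb{P}(E_k)$ individually: it first derives the transition density $p_s(x)$ of $\tilde W$ with respect to the speed measure $m(dx)=dx+2\nu\delta_0(dx)$ and shows $p_s(x)\le Cs^{-1/2}$; it then splits $E_k$ according to whether $|\tilde W_{kt}|\le (kt)^{\alpha'}$ (controlled by the density bound) or $|\tilde W_{kt}|>(kt)^{\alpha'}$ (controlled by a Brownian first-passage estimate), obtaining $\mathbb{P}(E_k)=O(k^{-1/2+\alpha'})$ and hence $\sum_k\mathbb{P}(E_k)=O(n^{1/2+\alpha'})=o(n^{1/2+\alpha})$ for any $\alpha'<\alpha$. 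Your excursion-theoretic approach instead handles the whole sum at once via the pathwise inequality $\sum_k\mathbf 1_{E_k}\le |A|/t+|S|/t+2N_{2t}$ and actually yields the sharper bound $\sum_k\mathbb{P}(E_k)=O(\sqrt n)$, using only Lemma~\ref{lem-guji1} plus standard excursion theory. What you pay is the need to justify the compensator identities; this goes through because $\Lambda((n+1)t,0)\le (n+1)t/(2\nu)$ is a bounded stopping time in the local-time filtration of the excursion process (indeed $\{\Lambda>\ell\}=\{\tau_\ell<(n+1)t-2\nu\ell\}$ is $\mathcal E_\ell$-measurable). One small caveat: the possibly incomplete last excursion straddling $(n+1)t$ means your displayed formulas for $\mathbb{E}[N_{2t}]$ and $\mathbb{E}[|S|]$ should be inequalities rather than equalities, at the cost of an additive $O(1)$. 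The paper's approach, while slightly less sharp, is more elementary (no excursion theory) and also delivers the per-term estimate $\mathbb{P}(E_k)=O(k^{-1/2+\alpha})$, which may be of independent use.
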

 \begin{proof}
 $\tilde{W}_t$ can be obtained by time-changing a Brownian motion $W_t$ as in Lemma \ref{lem-guji1}. Following the notations there, we write $A_t=t+2\nu L_0(t,0)$, where $L_0(t,x)$ is the local time of $W_t$, and $T_t=A_t^{-1}$ so that $\tilde{W}_t=W_{T_t}$. Now we define a measure $m(dx):=dx+2\nu 1_{\{0\}}(x)$ (in some references this measure is called speed measure, see \cite{MR0297016}), then by Proposition \ref{localtime} (ii), (iv) and $\mathbb{E}[W_t^2]=2t$, for any bounded measurable function $f$, we have a.s. 
 \begin{eqnarray}
 \nonumber\int_{0}^{t} f(W_{T_s})ds &=& \int_{0}^{T_t} f(W_s)dA_s 
= \int_{0}^{T_t} f(W_s)ds +2\nu \int_{0}^{T_t} f(W_s)L_0(ds,0) \\
 \nonumber&=& \int_{-\infty}^{\infty}f(x) L_0(T_t,x)dx +2\nu f(0)L_0(T_t,0) \\
 &=& \int_{-\infty}^{\infty}f(x) L_0(T_t,x)m(dx)
 \end{eqnarray}
 Since $L_0(T_t,x)=\Lambda_0(t,x)$, where $\Lambda_0(t,x)$ is the local time of $\tilde{W}$, this equality is equivalent to
 \begin{equation}
 \int_{0}^{t} f(\tilde{W}_s)ds = \int_{-\infty}^{\infty}f(x) \Lambda_0(t,x)m(dx)
 \end{equation}
 Taking differentiation of both sides with respect to $t$ and then taking the expectation gives us the probability density $p_t(x)$ of $\tilde{W}_t$ with respect to $m(dx)$, 
 \begin{eqnarray}
 p_t(x)= \frac{\partial \mathbb{E}\Lambda_0(t,x)}{\partial t}.
 \end{eqnarray}
 From the expression (\ref{2.13}), $p_t(0)\leq Ct^{-1/2}$ for some constant $C$ when $t$ is large enough. Note that $\tilde{W}$ behaves as a Brownian motion when it is not at $0$, and $m$ is Lebesgue measure on $\mathbb{R}\backslash\{0\}$. Consequently, $p_t(x)\leq Ct^{-1/2}$ also holds for $x\neq 0$. Hence for any $K>0$, 
 \begin{equation} \label{2.2A}
 \mathbb{P}(|\tilde{W}_t|\leq K)=\int_{-K}^{K}p_t(x)m(dx)\leq (K+2\nu)Ct^{-1/2}
 \end{equation}
 For the event $E_k$ and any $\alpha>0$,
 \begin{eqnarray}\label{2.29}
 \mathbb{P}(E_k) &\leq& \mathbb{P}(E_k\cap \{|\tilde{W}_{kt}|>(kt)^{\alpha} \})+\mathbb{P}(\{|\tilde{W}_{kt}|\leq(kt)^{\alpha} \}).
 \end{eqnarray}
 Conditional on $\tilde{W}_{kt}=x>0$, the probability of $\tilde{W}_{s}$ hitting $0$ on $[kt,(k+1)t]$ is the same as the one of a Brownian motion starting from $x$ hitting $0$, and it decreases as $|x|$ increases. Therefore, recall the definition of $E_k$,
 \begin{eqnarray}
 \mathbb{P}(E_k\cap \{|\tilde{W}_{kt}|>(kt)^{\alpha} \}) \leq 2\left(1-\Phi\Big(\frac{(kt)^{\alpha}}{\sqrt{2t}}\Big)\right)
 \end{eqnarray}
 where the right-hand side is the probability of a Brownian motion starting from $(kt)^{\alpha}$ hitting zero before time $t$. Since $1-\Phi(x)$ has a Gaussian decay, 
 \begin{equation}\label{2.2B}
 \mathbb{P}(E_k\cap \{|\tilde{W}_{kt}|>(kt)^{\alpha} \}) \leq C k^{-1/2+\alpha}.
 \end{equation}
 Applying (\ref{2.2A}) and (\ref{2.2B}) to (\ref{2.29}), we have
 \begin{eqnarray} \label{2.35}
 \mathbb{P}(E_k)=O(k^{-1/2+\alpha}).
 \end{eqnarray}
 Since $\alpha>0$ is arbitrary, taking summation of (\ref{2.35}) from $1$ to $n$ gives the desired result.
 \end{proof}
 
 \end{subsection}
 \begin{subsection}{Variance of the 2-point motion}
 In this subsection, we will compute the covariance of the Howitt-Warren 2-point motion. The following lemma and remark will be applied in Section 3 and 4.
 \begin{lem}\label{variance}
 Let $(X^1_t,X^2_t)$ be a Howitt-Warren 2-point motion starting from $(x_1,x_2)$ with stickiness parameter $\nu$. Then the covariance can be expressed as
 \begin{equation}\label{2.33}
 Cov(X^1_t,X^2_t)=G(x_1-x_2,t)+H(x_1-x_2,t),
 \end{equation}
 where
  \begin{equation}\label{5D}
  G(x,t):=\sqrt{2}\nu\int_{0}^{2t/x^2} \frac{\sqrt{2t-x^2s}} {\pi s^{3/2}} e^{-1/2s}ds,
  \end{equation}
  \begin{equation}\label{5E}
  H(x,t):=2\nu^2\int_{0}^{t}\bigg\{2 e^{(t-s)/2\nu^2}\Big[1-\Phi\big(\sqrt{t-s}/\nu\big)\Big] - 1\bigg\} \frac{\partial \Psi}{\partial s}(x_1-x_2,s) ds,
  \end{equation}
  where $\Psi(x,s)=2-2\Phi\big(|x|/\sqrt{2s}\big)$, and $\Phi$ is the standard Gaussian distribution function.
  Moreover, {\rm(i)} $\frac{\partial G}{\partial x}$ and $H$ are uniformly bounded by $2\nu$ and $6\nu^2$ respectively, {\rm(ii)} $G(\sqrt{n}x,nt)=\sqrt{n}G(x,t)$, and {\rm(iii)} for the covariance function $\Gamma$ as given in $($\ref{CovFun}$)$, $\Gamma\big((t,x_1),(t,x_2)\big)=G(x_1-x_2,t)$.
 \end{lem}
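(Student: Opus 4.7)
The plan is to reduce the covariance to an expected local time via the SDE (\ref{spde}) and then decompose by the first collision time. Since Lévy's characterization (as used in the proof of Lemma \ref{sdeexistence}) gives that $M^i_t := X^i_t - x_i - \beta t$ are standard Brownian motions, and since $X^1$ and $X^2$ share the driver $B^3$ exactly on $\{X^1_s = X^2_s\}$, the cross-variation reads
\begin{equation*}
\langle M^1, M^2\rangle_t = \int_0^t 1_{\{X^1_s = X^2_s\}}\,ds = 2\nu\,\Lambda(t, 0),
\end{equation*}
where the second equality is the third line of (\ref{spde}) and $\Lambda$ is the local time at $0$ of $X^1 - X^2$. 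Integrability of $M^1 M^2$ lets me take expectations, yielding $Cov(X^1_t, X^2_t) = 2\nu\,\mathbb{E}[\Lambda_{x_1 - x_2}(t, 0)]$.

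Next I would decompose by the first collision time $\tau := \inf\{s : X^1_s = X^2_s\}$. On $[0, \tau)$ the two driving Brownian motions are independent, so $X^1 - X^2$ is a Brownian motion of variance parameter $2$ starting at $y := x_1 - x_2$, and the reflection principle gives $\mathbb{P}(\tau \leq s) = \Psi(y, s)$; moreover $\Lambda$ does not charge $[0, \tau]$ by continuity. By the strong Markov property the post-$\tau$ motion is a Howitt-Warren pair started on the diagonal, so
\begin{equation*}
\mathbb{E}[\Lambda_{x_1 - x_2}(t, 0)] = \int_0^t \mathbb{E}[\Lambda_0(t - s, 0)]\,\frac{\partial \Psi(y, s)}{\partial s}\,ds
\end{equation*}
(with $\tau \equiv 0$ in the boundary case $y = 0$, handled separately). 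Substituting (\ref{2.13}) and multiplying by $2\nu$ splits the right-hand side into two pieces: the one containing $2e^{(t-s)/2\nu^2}[1 - \Phi(\sqrt{t-s}/\nu)] - 1$ is exactly $H(y, t)$, and after the change of variable $s \mapsto y^2 s/2$ in the $(t - s)^{1/2}$ piece one recovers the stated integral representation of $G(y, t)$.

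The listed properties then follow quickly, modulo one identity. (ii) is a direct substitution in the definition of $G$. For (i), $|H| \leq 2\nu^2$ uses $\int_0^t \partial_s \Psi\,ds \leq 1$ together with $|2e^{z^2/2}(1 - \Phi(z)) - 1| \leq 1$, which itself follows from the Mills-ratio bound $z(1 - \Phi(z)) \leq \phi(z)$ showing that $z \mapsto 2e^{z^2/2}(1 - \Phi(z))$ decreases from $1$ at $z = 0$ to $0$ at infinity. Given (iii), differentiating the representation $\nu \int_0^{2t}(\pi u)^{-1/2} e^{-y^2/(2u)}\,du$ of $G$ in $y$ and changing variables $v = y^2/(2u)$ reduces the resulting Gaussian integral to $\Gamma(1/2)$, yielding $|\partial_x G| \leq \nu\sqrt{2} \leq 2\nu$.

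The main obstacle is thus (iii), the identity $G(y, t) = \Gamma((t, r_1), (t, r_2))$ with $y = r_1 - r_2$. I would verify this by matching Laplace transforms in $t$. Writing $G$ via the density of $\tau$ as $\frac{\sqrt{2}\,\nu\,|y|}{\pi} \int_0^t (t - s)^{1/2} s^{-3/2} e^{-y^2/(4s)}\,ds$, and $\Gamma$ as $\frac{\sqrt{2}\,\nu}{\sqrt{\pi}} \int_0^t r^{-1/2} e^{-y^2/(4r)}\,dr$ after the substitution $r = u/2$, both Laplace transforms come out equal to $(\sqrt{\pi}/p^{3/2})\,e^{-|y|\sqrt{p}}$ via the classical formulas $\int_0^\infty s^{-1/2} e^{-ps - a/s}\,ds = \sqrt{\pi/p}\,e^{-2\sqrt{ap}}$ and $\int_0^\infty s^{-3/2} e^{-ps - a/s}\,ds = \sqrt{\pi/a}\,e^{-2\sqrt{ap}}$, applied with $a = y^2/4$. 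Uniqueness of Laplace transforms then gives the identity. This transform computation is the one place where real work is needed; everything else is bookkeeping with the SDE and the strong Markov property.
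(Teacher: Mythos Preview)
Your derivation of the main formula is the paper's: both reduce $Cov(X^1_t,X^2_t)$ to $2\nu\,\mathbb{E}[\Lambda(t,0)]$ (the paper phrases this via $\nu(\mathbb{E}|X^1_t-X^2_t|-|x_1-x_2|)$ and Tanaka--Meyer, which is the same thing), decompose by the first collision time $\tau$, apply the strong Markov property, and plug in (\ref{2.13}). Your bounds in (i) are in fact sharper than what the paper records---you obtain $|H|\le 2\nu^2$ via the Mills-ratio monotonicity, and $|\partial_x G|\le\nu\sqrt2$ by routing through the $\Gamma$-representation rather than differentiating (\ref{5D}) directly.

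The one genuine difference is (iii). The paper checks $G(x,t)=\Gamma((t,x),(t,0))$ by matching the value and first $x$-derivative at $x=0$ and then verifying that the second $x$-derivatives agree for all $x$ (an ODE-in-$x$ argument); Appendix~B also gives a probabilistic proof by computing $\mathbb{E}[L_x(t,0)]$ for standard Brownian motion two ways. Your Laplace-transform route is a clean alternative, reducing the identity to the classical pair $\int_0^\infty s^{-1/2}e^{-ps-a/s}\,ds=\sqrt{\pi/p}\,e^{-2\sqrt{ap}}$ and $\int_0^\infty s^{-3/2}e^{-ps-a/s}\,ds=\sqrt{\pi/a}\,e^{-2\sqrt{ap}}$ plus the convolution theorem. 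Both work; the paper's derivative-matching is shorter once one commits to computing $\partial_x^2$ on each side, while your transform argument avoids having to guess which derivatives to compare and handles the $x=0$ boundary case uniformly. Note that since you use (iii) to bound $\partial_x G$, you should establish (iii) before (i) in the write-up.
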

 \begin{proof}
 Assume the drift $\beta=0$ without loss of generality. Note that by Definition \ref{2pointmotion}, the covariance is given by
 \begin{equation}
 Cov(X^1_t,X^2_t)=\int_{0}^{t}1_{\{X^1_s=X^2_s\}}ds=\nu\big(E|X^1_t-X^2_t|-|x_1-x_2| \big). \end{equation}
 In order to show (\ref{2.33}), one only need to compute the first moment of the sticky Brownian motion $\tilde{W}_t=X^1_t-X^2_t$.
 Let $\tau$ be the stopping time of $\tilde{W}_t$ first hitting the origin. Since before $\tau$, $|\tilde{W}_t|$ behaves as a Brownian motion $W_t$ with quadratic variation $2t$ and starting point $|x_1-x_2|$, by the reflection principle,
 \begin{eqnarray}\label{2.38}
 P(\tau>t)=\mathbb{P}(\inf_{0\leq s\leq t}W_s>0)=2-2\Phi\Big(\frac{|x_1-x_2|}{\sqrt{2t}}\Big)=\Psi(x_1-x_2,t).
 \end{eqnarray}
 Let $\sigma$ be the stopping time of $W_t$ first hitting the origin. Note that on the event $\{\sigma\leq t\}$, the expectation of $W_t$ is $0$.
 \begin{eqnarray}
 E\big[|\tilde{W}_t|1_{\{\tau>t\}}\big]=\mathbb{E}\big[W_t1_{\{\sigma>t\}}\big]=\mathbb{E}\big[W_t\big]-\mathbb{E}\big[W_t1_{\{\sigma\leq t\}}\big]=|x_1-x_2|.
 \end{eqnarray}
 This last calculation implies that 
 \begin{equation}\label{2.40}
 E\big|\tilde{W}_t\big| -\left|x_1-x_2\right|=E\big[|\tilde{W}_t|1_ {\{\tau\leq t\}}\big].
 \end{equation}
 Conditional on the stopping time $\tau$, $\tilde{W}_t$ is a sticky Brownian motion starting from 0. Consider the first moment of a sticky Brownian motion $\tilde{W}^0_s$ starting form the origin with stickiness also at the origin. By the Tanaka-Meyer formula and Lemma \ref{lem-guji1},
  \begin{eqnarray}\label{2.41}
  \mathbb{E}\big|\tilde{W}^0_s\big|=2\mathbb{E}\big[\Lambda_0(s,0)\big]
  = 2\sqrt{2/\pi}~ s^{1/2} + 2\nu\left(2 e^{s/2\nu^2}\Big[1-\Phi\big(\sqrt{s}/{\nu}\big)\Big] - 1\right).
  \end{eqnarray}
  If we condition $\tilde{W}_t$ on $\tau$ and use the strong Markov property, then by (\ref{2.38}), (\ref{2.40}) and (\ref{2.41}),
  \begin{eqnarray}
  \nonumber Cov(X_t^1,X_t^2)&=&\nu \int_{0}^{t} \bigg\{ 2\sqrt{2(t-s)/\pi} + 2\nu\Big(2 e^{(t-s)/2\nu^2}\Big[1-\Phi\big(\sqrt{t-s}/{\nu}\big)\Big] - 1\Big) \bigg\} P(\tau\in ds) \\
  &=& \sqrt{2}\nu\int_{0}^{\frac{2t}{(x_1-x_2)^2}} \frac{\sqrt{2t-(x_1-x_2)^2s}}{\pi s^{3/2
  }}e^{-1/2s}ds \\
  &&+2\nu^2\int_{0}^{t}\bigg\{2 e^{(t-s)/2\nu^2}\Big[1-\Phi\big(\sqrt{t-s}/\nu\big)\Big] - 1\bigg\} P(\tau\in ds)\\
  \nonumber &=& G(x_1-x_2,t)+H(x_1-x_2,t).
  \end{eqnarray}
  
  As for the statement (i), since $e^{x^2/2}(1-\Phi(x))$ is bounded by $1$ when $x\geq 0$, for all $x$ and $t$,
  \begin{eqnarray}
  &\left|H(x,t)\right| \leq 2\nu^2\int_{0}^{t}(2+1)P(\tau\in ds) \leq 6\nu^2, \\
  & \Big|\frac{\partial G}{\partial x}(x,t)\Big| = \sqrt{2}\nu\Big| \int_{0}^{2t/x^2} \frac{-xs}{\pi s^{3/2}\sqrt{2t-x^2s}}e^{-1/2s}ds \Big| \leq \frac{2\nu}{\pi} \int_{0}^{2t/x^2}\frac{|x|}{\sqrt{s(2t-x^2s)}}ds\leq2\nu.
  \end{eqnarray}
  (ii) follows directly form the expression of $G$. By basic calculus, $\Gamma\big((t,0),(t,0)\big)=\nu\sqrt{t}=G(0,t)$, $\frac{\partial \Gamma((t,x),(t,0))}{\partial x}\big|_{x=0}\!=\!\frac{\partial G(x,t)}{\partial x}\big|_{x=0}$, $\frac{\partial^2 \Gamma((t,x),(t,0))}{\partial x^2}\!=\!\frac{\partial^2 G(x,t)}{\partial x^2}$, and $\Gamma\big((t,x_1),(t,x_2)\big)\!=\!\Gamma\big((t,x_1-x_2),(t,0)\big)$. Hence the identity in (iii) holds. At first sight, this identity is not trivial, so we also provide a probabilistic explanation in Appendix B. 
 \end{proof}
 \begin{remark}\label{rmk2.8}
 \rm Note that the Howitt-warren flow has independent increments, i.e., the random environment on disjoint time intervals are independent. Therefore, one can easily modify the above proof to obtain the following conditional covariance of two random motions in the Howitt-Warren flow: for $t>s$,
 \begin{equation}\label{conditional_variance}
 E\Big[\langle X^1,X^2\rangle(t)-\langle X^1,X^2\rangle(s) \Big|(X_s^1,X_s^2)\Big]=G(X_s^1-X_s^2,t-s)+H(X_s^1-X_s^2,t-s),
 \end{equation}
 where $\langle X^1,X^2\rangle$ denotes the covariation process.
 (\ref{conditional_variance}) will be used in the proof of Lemma \ref{one-time-level}.
 \end{remark}
 
 \end{subsection}
 
 \end{section}

 \begin{section}{Proof of Theorem \ref{qIP}}
 The proof of Theorem \ref{qIP} is based on a second moment method, which is inspired by \cite{M.Birk.J.CernyA.Depp.N.Gant.2013}. The idea is that the averaged law of a random motion $(X_t)_{t\geq 0}$ in the Howitt-Warren flow converges to the law of a drifted Brownian motion, and the quenched law satisfies a law of large numbers by variance calculations so that it also converges to the same limit. In the proof, we consider a class of test functions applied to $Y^{(n)}:=(Y_{nt}/\sqrt{n})_{0\leq t\leq T}$ for some fixed $T>0$, where $Y_t:=X_t-\beta t$, and bound the variance of their quenched mean $E^{\omega}f(Y^{(n)})$ in such a way that we can apply the Borel-Cantelli lemma to get an almost sure convergence for a subsequence of the quenched mean. Then with some modification we will reach our goal. This method can also be applied to show the quenched invariance principle for the random walk in an i.i.d. space-time random environment (see Appendix A). We begin with two lemmas.
 
 \begin{lem} \label{3A}
 Let $T>0$, and $C[0,T]$ be the space of continuous function on $[0,T]$ equipped with the sup norm $\parallel\cdot\parallel$. If for any bounded Lipschitz function $f:C[0,T]\rightarrow\mathbb{R}$, $E^{\omega}\big[f(Y^{(n)})\big]$ converges to $E\big[f(B)\big]$ a.s. as $t$ tends to $\infty$, where $B:=(B_t)_{0\leq t\leq T}$ is a standard Brownian motion, then for $\mathbb{P}-a.e.$ $\omega$, $Y^{(n)}$ converges weakly to $B$.
 \end{lem}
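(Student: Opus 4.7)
The plan is to upgrade the hypothesis, which only guarantees for each bounded Lipschitz $f$ separately a full-measure event $\Omega_f$ on which $E^{\omega}[f(Y^{(n)})] \to E[f(B)]$, to a single full-measure event on which this convergence holds simultaneously for a family of test functions rich enough to force weak convergence. The naive intersection $\bigcap_f \Omega_f$ is uncountable and need not have full measure, so I must first reduce to countably many test functions using the separability of the Polish space $C[0,T]$.

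More concretely, I proceed as follows. First, invoke the standard fact that weak convergence of probability measures on the separable Polish space $C[0,T]$ is metrized by the bounded Lipschitz distance
\[
d_{BL}(\mu,\nu) := \sup\Bigl\{\Bigl|\int g\,d\mu - \int g\,d\nu\Bigr| : \|g\|_\infty + \mathrm{Lip}(g) \leq 1 \Bigr\},
\]
and extract a countable \emph{convergence-determining} family $\mathcal{D} = \{f_k\}_{k \ge 1}$ of bounded Lipschitz functions, meaning that $\int f_k\,d\mu_n \to \int f_k\,d\mu$ for every $k$ if and only if $\mu_n \to \mu$ weakly. Such a family can be built by fixing a countable dense sequence $\{h_i\} \subset C[0,T]$ and taking suitable bounded Lipschitz functions of the distances $\|x - h_i\|_\infty$ with rational parameters, forming an algebra that separates points and a standard approximation. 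Second, for each $k$ the hypothesis applied to $f_k$ yields an event $\Omega_k$ with $\mathbb{P}(\Omega_k)=1$ on which $E^{\omega}[f_k(Y^{(n)})] \to E[f_k(B)]$; set $\Omega_0 := \bigcap_{k \ge 1} \Omega_k$, a countable intersection of full-measure events, which therefore satisfies $\mathbb{P}(\Omega_0)=1$. Third, for every $\omega \in \Omega_0$ the quenched laws of $Y^{(n)}$ converge against every test function in $\mathcal{D}$, so by the convergence-determining property the quenched law converges weakly to the law of $B$, as desired.

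The only non-routine step is the existence of a countable convergence-determining family of bounded Lipschitz functions on the non-locally-compact Polish space $C[0,T]$; once this is granted, the rest is bookkeeping. This lemma is merely a bridge: the substantive work, carried out in the sections that follow, is to verify the hypothesis by controlling the variance of the random variables $E^{\omega}[f(Y^{(n)})]$ (using the covariance computations of Lemma \ref{variance} and Remark \ref{rmk2.8}) well enough to invoke Borel--Cantelli along a subsequence and then interpolate.
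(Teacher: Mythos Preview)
Your proposal is correct and follows essentially the same approach as the paper: both reduce the problem to the existence of a countable convergence-determining class of bounded Lipschitz functions on the Polish space $C[0,T]$, after which a countable intersection of full-measure events gives the conclusion. The paper simply cites Proposition~3.17 of Ethier--Kurtz for this existence, whereas you sketch an explicit construction via a dense sequence and distance functions; the underlying idea is identical.
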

 \begin{proof}
 It suffices to show that there exists a convergence determining class for $C[0,T]$ that consists of countably many bounded Lipschitz functions. However, the proof of Proposition 3.17 in \cite{MR900810} shows how to find such a convergence determining class for general Polish space. As a particular case Lemma \ref{3A} holds.
 \end{proof}
 To check the almost sure convergence for bounded Lipschitz function $f$, we first consider its variance.
 
 \begin{lem}\label{3B}
 For any bounded Lipschitz function $f$ and a random motion $(X_t)_{t\geq 0}$ with $X_0=0$ in the Howitt-Warren flow, there exists a constant $C_{f,T,\nu}>0$, depending only on $f$, $T$, and the stickiness parameter $\nu$ of the Howitt-Warren $2$-point motion, such that
 \begin{equation}
 \mathbb{E}\Big[\big(E^{\omega}f(Y^{(n)})-Ef(B)\big)^2\Big] \leq C_{f,T,\nu} n^{-1/4}.
 \end{equation}	
 \end{lem}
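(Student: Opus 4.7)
The plan is to reduce the bound to a two-point computation and then to compare the law of the Howitt-Warren two-point motion with that of a pair of independent Brownian motions, using a coupling built from the SDE representation in Section 2.1.

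First, under the averaged law $P$ a single random motion $X_t$ is a Brownian motion with drift $\beta$, hence $Y_t = X_t - \beta t$ is a standard Brownian motion and by Brownian scaling $Y^{(n)} \stackrel{d}{=} B$ under $P$. The bias $\mathbb{E}E^\omega f(Y^{(n)}) - Ef(B)$ therefore vanishes identically, and it remains to bound
\begin{equation*}
\mathrm{Var}_{\mathbb{P}}\bigl(E^\omega f(Y^{(n)})\bigr) = E\bigl[f(Y^{(n),1})\,f(Y^{(n),2})\bigr] - E[f(B)]^2,
\end{equation*}
where $(X^1, X^2)$ is a Howitt-Warren two-point motion started at $(0,0)$ and $Y^{(n),i}_t := (X^i_{nt} - \beta nt)/\sqrt{n}$.

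Next, I enlarge the probability space carrying a weak solution of (\ref{spde}) by two further independent Brownian motions $\tilde B^3, \tilde B^4$, independent of $B^1, B^2, B^3$, and define
\begin{equation*}
\tilde Z^i_t := \int_0^t 1_{\{X^1_s \neq X^2_s\}}\, dB^i_s + \int_0^t 1_{\{X^1_s = X^2_s\}}\, d\tilde B^{i+2}_s, \qquad i = 1, 2.
\end{equation*}
L\'evy's characterization identifies $\tilde Z^1, \tilde Z^2$ as independent standard Brownian motions, since the cross-quadratic variation vanishes by the independence of the driving noises. Setting $Z^i_t := X^i_t - \beta t$, the difference $Z^i - \tilde Z^i$ is a continuous martingale with quadratic variation $2\int_0^{\cdot} 1_{\{X^1 = X^2\}}\, ds = 4\nu\,\Lambda(\cdot, 0)$ by the third equation of (\ref{spde}). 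Doob's $L^2$ inequality combined with Lemma \ref{lem-guji1} then yields
\begin{equation*}
\mathbb{E}\Bigl[\sup_{t \le nT} (Z^i_t - \tilde Z^i_t)^2\Bigr] \le 16\nu\,\mathbb{E}[\Lambda_0(nT, 0)] = O(n^{1/2}),
\end{equation*}
so after the spatial rescaling, $\mathbb{E}\|Y^{(n),i} - \tilde Y^{(n),i}\|_\infty^2 = O(n^{-1/2})$ where $\tilde Y^{(n),i}_t := \tilde Z^i_{nt}/\sqrt{n}$.

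Finally, because $f$ is bounded by $\|f\|_\infty$ and Lipschitz with some constant $L_f$, replacing $Y^{(n),i}$ by $\tilde Y^{(n),i}$ in one factor at a time and applying Cauchy-Schwarz gives
\begin{equation*}
\bigl|E[f(Y^{(n),1}) f(Y^{(n),2})] - E[f(\tilde Y^{(n),1}) f(\tilde Y^{(n),2})]\bigr| \le 2\|f\|_\infty L_f \sqrt{\mathbb{E}\|Y^{(n),1} - \tilde Y^{(n),1}\|_\infty^2} = O(n^{-1/4}).
\end{equation*}
Since $\tilde Y^{(n),1}, \tilde Y^{(n),2}$ are independent standard Brownian motions, $E[f(\tilde Y^{(n),1}) f(\tilde Y^{(n),2})] = E[f(B)]^2$, so the variance itself is $O(n^{-1/4})$, which combined with the vanishing bias gives the desired estimate. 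The one genuine subtlety is setting up the auxiliary noises $\tilde B^3, \tilde B^4$ on a common enlargement of the probability space so that L\'evy's criterion and the quadratic-variation identity for $Z^i - \tilde Z^i$ can be applied honestly; once this is in place, all quantitative input reduces to the $O(\sqrt{t})$ bound on $\mathbb{E}[\Lambda_0(t, 0)]$ provided by Lemma \ref{lem-guji1}.
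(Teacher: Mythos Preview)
Your proof is correct and follows essentially the same approach as the paper: reduce to the two-point computation, invoke the SDE coupling from Section~2.1, bound the sup-norm distance between the true and comparison processes via Doob's $L^2$ inequality together with the local-time estimate $\mathbb{E}[\Lambda_0(t,0)]=O(t^{1/2})$ of Lemma~\ref{lem-guji1}, and conclude using the Lipschitz property of $f$. The only difference is that you enlarge the probability space by auxiliary noises $\tilde B^3,\tilde B^4$ to manufacture independent comparison Brownian motions $\tilde Z^1,\tilde Z^2$, whereas the paper simply compares $Y^{i,(n)}$ to the rescaled $B^{i,(n)}$ coming directly from the SDE~(\ref{spde}); since $B^1,B^2,B^3$ are already independent standard Brownian motions there, no enlargement is needed and the difference $Y^i_t-B^i_t=\int_0^t 1_{\{X^1=X^2\}}(dB^3_s-dB^i_s)$ has the same quadratic variation you compute.
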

 \begin{proof}
 Let $X_t^1$, $X_t^2$ with $X^1_0=X^2_0=0$ be two independent random motions in the same environment, i.e., for a fixed realization of the Howitt-Warren flow. Then under the averaged law $P$, $(X_t^1,X_t^2)$ is a $2$-point motion. Moreover, by Theorem \ref{thmsde} we have a coupling $(X_t^1,X_t^2,B_t^1,B_t^2,B_t^3)$ as in (\ref{spde}), where $B_t^1$,$B_t^2$ and $B_t^3$ are independent Brownian motions. Let $Y_t^i:=X_t^i-\beta t$, $Y^{i,(n)}:=(Y^i_{nt}/\sqrt{n})_{0\leq t\leq T}$ for i=1,2, and $B^{j,(n)}:=(B^j_{nt}/\sqrt{n})_{0\leq t\leq T}$ for $j=1,2,3$.
 \begin{eqnarray} \label{3.1}
 \nonumber&& E\big[\parallel Y^{1,(n)}-B^{1,(n)}\parallel \big] = n^{-\frac{1}{2}}E\big[\sup_{0\leq t\leq T}\big|\int_{0}^{nt}1_{\{X_t^1=X_t^2\}}dB_s^3-\int_{0}^{nt}1_{\{X_t^1=X_t^2\}} dB_s^1\big| \big] \\
 \nonumber&&~\leq~  4n^{-\frac{1}{2}}\Big\{E\Big[\Big(\int_{0}^{nT}1_{\{X_t^1=X_t^2\}}dB_s^3\Big) ^2\Big]\Big\}^{\frac{1}{2}}\!\! +\! 4n^{-\frac{1}{2}}\Big\{E\Big[\Big(\int_{0}^{nT}1_ {\{X_t^1=X_t^2\}}dB_s^1\Big)^2\Big]\Big\}^{\frac{1}{2}} \\
 &&~= ~ 8n^{-\frac{1}{2}}\Big(E\Big[\int_{0}^{nT}1_{\{X_t^1=X_t^2\}}ds \Big]\Big)^\frac{1}{2}~ = ~ 16n^{-\frac{1}{2}}\nu \Big(E\left[\Lambda_0(nT,0) \right]\Big)^\frac{1}{2}~\leq~  C_{T,\nu} n^{-\frac{1}{4}},
 \end{eqnarray}
 where the first equality holds because of the coupling (\ref{spde}), the second step is by Doob's $L_2$ inequality, and the last two steps are by (\ref{spde}) and (\ref{2.13}). Since $X_t^1$, $X_t^2$ are independent in a same environment $\omega$ and evolve as a $2$-point motion under the averaged law $P$, we have
 \begin{eqnarray}
 \nonumber & & \mathbb{E}\big[\big(E^{\omega}f(Y^{(n)})-Ef(B)\big) ^2 \big] \\
 \nonumber &= & E \big[f(Y^{1,(n)})f(Y^{2,(n)})\big] - E\big[f(B^{1,(n)})f(B^{2,(n)})\big] \\
 \nonumber &\leq & \parallel f \parallel_{\infty} C_f \left( E\big[\parallel Y^{1,(n)}-B^{1,(n)}\parallel \big] + E\big[\parallel Y^{2,(n)}-B^{2,(n)}\parallel \big] \right) \\
 &\leq  & C_{f,T,\nu} n^{-1/4},
 \end{eqnarray}
 where the first inequality holds because $f$ is Lipschitz with Lipschitz constant  $C_f$. 
 \end{proof}
 Now we can finish the proof of Theorem \ref{qIP}.
 \begin{proof}[Proof of Theorem \ref{qIP}] 
 For the first statement, we only need to prove that for all $T>0$, $(\frac{Y_{nt}}{\sqrt{n}}) _{0\leq t\leq T}$ converges weakly to $(B_t)_{0\leq t\leq T}$ in $C[0,T]$ equipped with the sup norm. We follow the notations given in Lemma \ref{3A} and Lemma \ref{3B}. By Lemma \ref{3A}, it only remains to show that for each bounded Lipschitz function $f$, a.s., $E^{\omega}f(Y^{(n)})\rightarrow Ef(B)$. Without loss of generality, we assume $Y_0=0$.
 
 First, observe that along a subsequence $k_n=n^5$, the almost sure convergence holds. Indeed, for any $\epsilon>0$, by Lemma \ref{3B} and the Markov inequality,
 \begin{equation}
 \mathbb{P}\left(\big|E^{\omega}f(Y^{(n^5)})-Ef(B)\big|\geq \epsilon \right)\leq C_{f,T,\nu} \epsilon^{-2}n^{-5/4}.
 \end{equation}
 It is summable, and hence by the Borel-Cantelli lemma $E^{\omega}f(Y^{(n^5)})\rightarrow Ef(B)$ a.s..
 
 For general $m\in[n^5,(n+1)^5)$,  we bound the maximum of the differences between $E^{\omega}f(Y^{(m)})$ and $E^{\omega}f(Y^{(n^5)})$. Since $(n+1)^5-n^5\leq 6n^4$ for large $n$,
 \begin{eqnarray}
 \nonumber & & \max_{n^5\leq m<(n+1)^5} \Big|E^{\omega}f(Y^{(m)})-E^{\omega} f(Y^{(n^5)})\Big| \\
 \nonumber & \leq & \max_{n^5\leq m<(n+1)^5} C_f \Big\{
 \Big(\sqrt{\frac{m}{n^5}}-1\Big)E^{\omega}\parallel Y^{(m)}\parallel+ 
 E^{\omega}\Big[n^{-5/2}\sup_{0\leq t\leq T} \big|Y_{mt}-Y_{n^5t}\big|\Big]\Big\} \\
 & \leq & C'_f n^{-1}\max_{n^5\leq m<(n+1)^5}E^{\omega}\parallel Y^{(m)}\parallel + C''_f n^{-5/2}E^{\omega}\Big[\sup_{0\leq t\leq T}\sup_{0\leq s<6n^4} \big|Y_{n^5t+sT}-Y_{n^5t}\big|\Big].
 \end{eqnarray}
 For the process $M_t:=\sup_{0\leq s<6n^4} \big|Y_{n^5t+sT}-Y_{n^5t}\big|$, we note that for any $s\leq t\leq s+6n^4T$, $M_t\leq M_s+M_{s+6n^4T}$.
 Therefore, setting $T_n:=\{6Tk/n:0\leq k\leq [n/6]+1 \}$ gives the following bound:
 \begin{equation}
 \sup_{0\leq t\leq T}M_t\leq 2\max_{t\in T_n}M_t
 \end{equation}
 Moreover, since $Y_t=X_t-\beta t$ is a standard Brownian motion starting form $0$ under the averaged law $P$, the $k$-th moment of $\sup_{0\leq s\leq t}|Y_s|$ is of order $t^{k/2}$ and $M_t$ has stationary increments. Hence by the Markov inequality, for any $\delta>0$,
 \begin{equation} \label{3C}
 \mathbb{P}\Big(n^{-1}\max_{n^5\leq m<(n+1)^5}E^{\omega}\parallel Y^{(m)}\parallel>\delta \Big)\leq \delta^{-2} n^{-7} E\Big[\max_{0\leq m<(n+1)^5T}|Y_t|^2 \Big]= O(n^{-2}),
 \end{equation}
 \begin{eqnarray}
 \label{3D} \mathbb{P}\Big(n^{-5/2}E^{\omega}\big[\sup_{0\leq t\leq T}M_t\big]>\delta\Big) \leq 
  2^6\delta^{-6} n^{-15} E\Big[\max_{t\in T_n}M_t^6\Big]\leq2^6\delta^{-6} n^{-15} nE\Big[M_0^6\Big] = O(n^{-2}),
 \end{eqnarray}
 where the last inequality in (\ref{3D}) holds because $E\big[\max_{t\in T_n}M_t^6\big]\leq E\big[\sum_{t\in T_n}M_t^6\big] \leq  nE\big[M_0^6\big]$.
 Both (\ref{3C}) and (\ref{3D}) are summable, which implies that for $\mathbb{P}$-a.e. $\omega$, as $n\rightarrow\infty$,
 \begin{equation}
 \max_{n^5\leq m<(n+1)^5} \left|E^{\omega}f(Y^{(m)})-E^{\omega} f(Y^{(n^5)})\right| \longrightarrow 0.
 \end{equation}
 
 Therefore, for each bounded Lipschitz function $f$, $\mathbb{P}$-a.s., $E^{\omega}\big[f(Y^{n})\big] \rightarrow Ef\big[(B)\big]$, which finishes the proof of the first part.
 
 To show that for $\mathbb{P}$-a.e. $\omega$, $Z_n:=n^{-1/2}\max_{s\leq nt}\big|E^{\omega}X_s-\beta s\big|$ converges to $0$, we again consider the second moment. By Doob's $L_2$ inequality and Lemma \ref{variance}
 \begin{eqnarray} \label{3AAA}
 \mathbb{E}\left[\left( n^{-1/2}\max_{s\leq nt}\big|E^{\omega}X_s-\beta s\big|\right)^2\right] \leq 4n^{-1}E[Y^1_{nt}Y^2_{nt}] = O(n^{-1/2}),
 \end{eqnarray}
 Therefore, the order of the second moment of $Z_n$ is $n^{-1/2}$. So along the subsequence $\{n^3\}$, $Z_{n^3}\rightarrow 0$ by the Markov inequality and Borel-Cantelli. As for $n^3\leq k<(n+1^3)$,
 \begin{equation}
 \big|Z_k-Z_{n^3}\big|\leq C_1 n^{-5/2}\max_{s\leq n^3t}\big|E^{\omega}X_s-\beta s\big|+C_2 n^{-3/2}\max_{n^3t\leq s\leq (n+1)^3t}\big|E^{\omega}[X_s-X_{n^3t}]-\beta (s-n^3t)\big|.
 \end{equation}
 Similar argument as (\ref{3C}) and (\ref{3D}) can be applied here, and therefore we can show that for $\mathbb{P}$-a.e. $\omega$, $\max\limits_{n^3\leq k<(n+1^3)}\big|Z_k-Z_{n^3}\big|$ converges to $0$. Thus for $\mathbb{P}$-a.e. $\omega$, $Z_n$ converges to $0$. 
 \end{proof}
 
 \end{section}
 
 \begin{section}{Proof of Theorem \ref{GaussianlimitThm}}
 In this section we will only focus on $a_n(t,r)$ as defined in Theorem \ref{GaussianlimitThm}, since the proof for $b_n(t,r)$ can be easily obtained by a translation of the environment, see Remark \ref{rmk1.7}.
 
 The strategy is essentially the same as in \cite{M.Bala.F.Rass.T.Sepp.2006}. We proceed in two steps. First it will be shown in Lemma \ref{one-time-level} that for a fixed time $t$, the distribution of $(a_n(t,r_1),\cdots, a_n(t,r_k))$ for any given integer $k>0$ converges weakly to $(a(t,r_1),\cdots, a(t,r_k))$. 
 
 In the second step, observe that by decomposing $X^{r\sqrt{n}-\beta nt,-nt}_0$ in terms of its increments on $[-nt,-n(t-s)]$ and $[-n(t-s),0]$,
  \begin{eqnarray}
  \nonumber a_n(t,r)&=& n^{-1/4}\left(E^{\omega}X^{r\sqrt{n}-\beta nt,-nt}_0-r\sqrt{n}\right)\\
  \nonumber&=& n^{-1/4}\left(E^{\omega}X^{r\sqrt{n}-\beta nt,-nt}_0-E^{\omega}X^{r\sqrt{n}-\beta nt,-nt}_{-n(t-s)} \right)+n^{-1/4}\left( E^{\omega}X^{r\sqrt{n}-\beta nt,-nt}_{-n(t-s)}-r\sqrt{n}\right) \\
  \nonumber&=& n^{-1/4}\int \left(E^{\omega}X^{z-\beta ns,-ns}_0-z\right) P^{\omega}\left(X^{r\sqrt{n}-\beta nt,-nt}_{-n(t-s)}+\beta ns\in dz\right) \\
  \nonumber &&+n^{-1/4} \left(E^{\omega}X^{r\sqrt{n}-\beta nt,-nt}_{-n(t-s)}-r\sqrt{n}\right) \\
  \label{5G}&=& \int a_n(s,\frac{z}{\sqrt{n}}) P^{\omega}\left(X^{r\sqrt{n}-\beta nt,-nt}_{-n(t-s)}+\beta ns\in dz\right) + a_n(t-s,r)\circ T_{-ns,-\beta ns},
  \end{eqnarray}
  where $T_{t,x}$ denotes the translation of the random environment that makes (x,t) the new space-time origin. Here we note that in the decomposition, as functions of the random environment in disjoint time intervals, $a_n(s,\frac{z}{\sqrt{n}})$ and $a_n(t-s,r)\circ T_{-ns,-\beta ns}$ are independent, while the random measure in the right-hand side of (\ref{5G}) converge to Gaussian distribution by Theorem \ref{qIP}.
  
  Meanwhile, for the limiting Gaussian process $\big(a(t,r)\big)$ in Theorem \ref{GaussianlimitThm}, \cite[Lemma 3.1]{M.Bala.F.Rass.T.Sepp.2006} shows that
  \begin{prop} \label{prop1}
  There is a version of the Gaussian process $\{a(t,r):(t,r)\in\mathbb{R}^+\times\mathbb{R}\}$ that is continuous in $(t,r)$. Moreover, given $0=t_0<t_1<\cdots<t_n$, let $\{\tilde{a}(t_i-t_{i-1},\cdot):1\leq i\leq n\}$ be independent random functions such that $\tilde{a}(t_i-t_{i-1},\cdot)$ has the distribution of $a(t_i-t_{i-1},\cdot)$ for all $i$. Define $a^*(t_1,r)=\tilde{a}(t_1,r)$ for $r\in\mathbb{R}$ and inductively for $i=2,\cdots,n$ and $r\in\mathbb{R}$,
  \begin{equation}\label{decomp.a}
  a^*(t_i,r):=\int a^*(t_{i-1},r+z)\phi\big(\frac{z}{\sqrt{t_i-t_{i-1}}}\big)dz + \tilde{a}(t_i-t_{i-1},r),
  \end{equation}
  where $\phi(x)=\frac{1}{\sqrt{2\pi}}e^{-x^2/2}$.
  Then the joint distribution of the random functions $\{a^*(t_i,\cdot):1\leq i\leq n\}$ is the same as that of $\{a(t_i,\cdot):1\leq i\leq n\}$.
  \end{prop}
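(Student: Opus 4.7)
My plan is to prove the two parts of the proposition separately by direct covariance computations, exploiting the fact that both $a$ and the constructed process $a^*$ are centered Gaussian and are therefore determined by their covariance structure alone.

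For the continuous modification, I would apply Kolmogorov's continuity criterion to the increments of $a$. Writing $\Gamma((t,r),(s,q)) = \sqrt{2}\nu\int_{|t-s|}^{t+s} p_u(r-q)\,du$, where $p_u$ denotes the centered Gaussian density of variance $u$, the quantity $E[(a(t,r)-a(s,q))^2] = \Gamma((t,r),(t,r)) + \Gamma((s,q),(s,q)) - 2\Gamma((t,r),(s,q))$ decomposes into manageable pieces: elementary estimates (e.g.\ $\int_0^\delta p_u(0)\,du = O(\sqrt\delta)$ and H\"older regularity of $p_u(\cdot)$ away from $u=0$) yield a bound of the form $C(|t-s|^{\alpha_1}+|r-q|^{\alpha_2})$ for some positive exponents. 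Gaussianity then upgrades this to $E[(a(t,r)-a(s,q))^{2p}] \le C_p(|t-s|^{\alpha_1}+|r-q|^{\alpha_2})^p$ for every $p$, and Kolmogorov's criterion yields a jointly continuous version.

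For the decomposition, note that $a^*$ is built from the independent centered Gaussian random functions $\{\tilde{a}(t_i-t_{i-1},\cdot)\}$ by deterministic linear operations, so the family $(a^*(t_i,\cdot))_{i=1}^n$ is jointly centered Gaussian. Since $(a(t_i,\cdot))_{i=1}^n$ is also centered Gaussian, it suffices to verify the covariance identity $E[a^*(t_i,r)\,a^*(t_j,q)] = \Gamma((t_i,r),(t_j,q))$ for all $i,j$ and $r,q\in\mathbb{R}$. I would argue by induction on $n$; the essential two-time step with $t_1<t_2$ uses the independence of $\tilde{a}(t_1,\cdot)$ and $\tilde{a}(t_2-t_1,\cdot)$ to collapse the cross covariance to $\int \Gamma((t_1,r),(t_1,q+z))\,p_{t_2-t_1}(z)\,dz$. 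Substituting the integral expression for $\Gamma$, applying Fubini, and invoking the Chapman--Kolmogorov identity $p_u * p_{t_2-t_1} = p_{u+t_2-t_1}$ converts, under the substitution $v = u+(t_2-t_1)$, the range $u\in[0,2t_1]$ into $v\in[t_2-t_1,t_1+t_2]$, which is precisely the range appearing in $\Gamma((t_1,r),(t_2,q))$. The variance at $t_2$ follows by expanding both summands of (\ref{decomp.a}) and applying the same semigroup identity twice; the induction step for $n\ge 3$ then goes through because the recursion is first-order Markov in $i$ and the $\tilde{a}$'s on disjoint intervals are independent.

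The main obstacle is bookkeeping rather than conceptual: one must reconcile the kernel $\phi(z/\sqrt{t_i-t_{i-1}})\,dz$ written in (\ref{decomp.a}) with the normalized Brownian density $p_{t_i-t_{i-1}}(z)\,dz$ required for the semigroup identity to match $\Gamma$, and one must split the H\"older bound in the continuity argument according to whether $t=s$, $r=q$, or neither holds. Once these normalizations are pinned down, the proof is driven entirely by the two ingredients already on the table: the Chapman--Kolmogorov property of the Brownian transition density and the Gaussianity of the constituents.
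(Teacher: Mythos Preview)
The paper does not actually prove this proposition: it is quoted verbatim as \cite[Lemma~3.1]{M.Bala.F.Rass.T.Sepp.2006} and used as a black box in Section~4.2. So there is no in-paper proof to compare against.

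Your outline is the natural one and it works. Both parts are sound: Kolmogorov's criterion via Gaussian hypercontractivity handles the continuous modification, and for the decomposition the heart is exactly the Chapman--Kolmogorov computation you sketch. To confirm the key step, writing $\Gamma((t,r),(s,q))=\sqrt{2}\,\nu\int_{|t-s|}^{t+s}p_u(r-q)\,du$ one has
\[
\int \Gamma((t_1,r),(t_1,q+z))\,p_{\Delta}(z)\,dz
=\sqrt{2}\,\nu\int_0^{2t_1}p_{u+\Delta}(r-q)\,du
=\sqrt{2}\,\nu\int_{\Delta}^{t_1+t_2}p_v(r-q)\,dv
=\Gamma((t_1,r),(t_2,q)),
\]
and the double convolution in the variance term produces $\int_{2\Delta}^{2t_2}+\int_0^{2\Delta}=\int_0^{2t_2}$, matching $\Gamma((t_2,r),(t_2,q))$. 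The induction then goes through as you say.

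You are also right to flag the kernel normalization: as written in \eqref{decomp.a} the measure $\phi(z/\sqrt{\Delta})\,dz$ has total mass $\sqrt{\Delta}$, not $1$, and the identity only holds with the normalized density $p_\Delta(z)\,dz=\tfrac{1}{\sqrt{\Delta}}\phi(z/\sqrt{\Delta})\,dz$. This is a transcription slip in the statement (repeated at \eqref{4.33} and \eqref{4.45}); in the cited source the kernel is the Brownian transition density, and your computation uses the correct normalization.
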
 
  
  We see that the decompositions (\ref{5G}) and (\ref{decomp.a}) have the same structure. In order to show the convergence of the finite dimensional distributions of $a_n(t,r)$, we only need to take advantage of this structure, and apply induction to $a_n(t,r)$.
  
 \begin{subsection}{A deterministic time-level}
 In this subsection we prove the weak convergence of the finite dimensional distributions of $a_n(t,r)$ for a fixed time $t$ as formulated in the following lemma:
 \begin{lem}\label{one-time-level}
 For any fixed $t>0$ and $N\in\mathbb{N}$, let $r_1<\cdots<r_N$ be $N$ points on the real line. Then the $\mathbb{R}^N$-valued random vector $(a_n(t,r_1),\cdots, a_n(t,r_N))$ converges weakly to the mean zero Gaussian vector $(a(t,r_1),\cdots, a(t,r_N))$ with covariance matrix $\left(\Gamma\big((t,r_i),(t,r_j)\big)\right)_{1\leq i,j\leq N}$, where $\Gamma\big((t,r_i),(t,r_j)\big)=\nu\int_{0}^{2t}\frac{1}{\sqrt{\pi u}}e^{-(r-q)^2/(2u)}du$.
 \end{lem}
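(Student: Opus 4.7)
The plan is to combine the Cramér--Wold device with a martingale central limit theorem driven by the filtration of the random environment.

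By Cramér--Wold it suffices to prove, for every $c_1,\ldots,c_N\in\mathbb{R}$, that $S_n:=\sum_\ell c_\ell a_n(t,r_\ell)$ converges in distribution to a mean-zero Gaussian with variance $\sigma^2:=\sum_{i,j}c_ic_j\Gamma((t,r_i),(t,r_j))$. Writing $x_\ell:=r_\ell\sqrt{n}-\beta nt$, one has $\mathbb{E}[S_n]=0$ at once. For the second moment, two independent random motions in the same realization of the flow form a Howitt--Warren 2-point motion under the averaged law, so
\begin{equation*}
\mathbb{E}\bigl[(E^\omega X_0^{x_i,-nt}-x_i-\beta nt)(E^\omega X_0^{x_j,-nt}-x_j-\beta nt)\bigr]
=G(x_i-x_j,nt)+H(x_i-x_j,nt)
\end{equation*}
by Lemma~\ref{variance}. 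Multiplying by $n^{-1/2}$, the $H$-contribution is $O(n^{-1/2})$ by uniform boundedness, while $n^{-1/2}G(\sqrt{n}(r_i-r_j),nt)=G(r_i-r_j,t)=\Gamma((t,r_i),(t,r_j))$ by the scaling and identity (ii)--(iii) of Lemma~\ref{variance}. Hence $\mathbb{E}[S_n^2]\to\sigma^2$.

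For asymptotic normality I would unveil the environment in time. Let $\mathcal{G}_s^{(n)}:=\sigma\bigl((K^\omega_{u,v})_{-nt\le u\le v\le -nt+s}\bigr)$ and set
\begin{equation*}
M_s^\ell:=\mathbb{E}\bigl[E^\omega X_0^{x_\ell,-nt}\,\big|\,\mathcal{G}_s^{(n)}\bigr],\qquad s\in[0,nt].
\end{equation*}
Using the Markov property of the environment together with the annealed identity $\mathbb{E}[E^{\omega'}X_0^{y,-nt+s}]=y+\beta(nt-s)$ (where $\omega'$ denotes the independent future environment), this collapses to $M_s^\ell=E^\omega X_{-nt+s}^{x_\ell,-nt}+\beta(nt-s)$. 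In particular $M_0^\ell=x_\ell+\beta nt$ and $M_{nt}^\ell=E^\omega X_0^{x_\ell,-nt}$, so $N_s:=n^{-1/4}\sum_\ell c_\ell M_s^\ell$ is a continuous $(\mathcal{G}_s^{(n)})$-martingale with $S_n=N_{nt}-N_0$.

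By the continuous martingale central limit theorem it then suffices to show $\langle N\rangle_{nt}\to\sigma^2$ in probability. A polarization computation, applied to the identity above at each time $s$, gives $\mathbb{E}[\langle M^i,M^j\rangle_{nt}]=G(x_i-x_j,nt)+H(x_i-x_j,nt)$, so $\mathbb{E}[\langle N\rangle_{nt}]\to\sigma^2$ by the same rescaling as before. The main obstacle is upgrading this to convergence in probability, i.e.\ showing that $\mathrm{Var}\bigl(n^{-1/2}\langle M^i,M^j\rangle_{nt}\bigr)\to 0$ for each pair $(i,j)$. This is genuinely a 4-point motion estimate: it asks that the collision-time integrals associated to the $(i,j)$-pair on widely separated time slabs decorrelate. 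I would attack it by applying Remark~\ref{rmk2.8} iteratively to reduce $\mathbb{E}[\langle M^i,M^j\rangle_{nt}^2]$ to an integral over the positions of a 4-point motion, then combining the boundedness of $H$ and $\partial_x G$ in Lemma~\ref{variance}(i) with the collision-event bound of Lemma~\ref{lem-guji2} to show that the leading order is exactly $(G(x_i-x_j,nt))^2$. If this direct route proves unwieldy, a fallback is to partition $[-nt,0]$ into $K$ subintervals, use independence of the flow on disjoint intervals to write $S_n$ as a sum of conditionally independent pieces, apply a Lyapunov CLT within each piece, and send $K\to\infty$ after $n\to\infty$.
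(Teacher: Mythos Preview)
Your overall architecture matches the paper's: Cram\'er--Wold, an environment-filtration martingale, and the realization that the only hard step is showing the bracket converges in probability, which is a 4-point motion estimate handled with Lemma~\ref{variance}(i) and Lemma~\ref{lem-guji2}. The paper uses a discrete filtration $\mathcal{F}_{n,k}=\sigma(K_{u,v}:-nt\le u\le v\le -nt+kt)$ and the martingale array $z_{n,k}=n^{-1/4}\sum_\ell c_\ell E^\omega[X^{n,\ell}_{kt}-X^{n,\ell}_{(k-1)t}-\beta t]$, checking Lyapunov via sixth moments; your continuous martingale $N_s$ is a legitimate variant (and in fact spares you the Lindeberg check).

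The genuine gap is the bracket variance. Your sketch (``reduce $\mathbb{E}[\langle M^i,M^j\rangle_{nt}^2]$ to a 4-point integral and show the leading order is $G^2$'') names the right objects but misses the mechanism the paper actually uses. The paper writes $\sum_k\bigl(\mathbb{E}[z_{n,k}^2\mid\mathcal{F}_{n,k-1}]-\mathbb{E}[z_{n,k}^2]\bigr)=\sum_{l=1}^{n-1}R_l$ with $R_l$ orthogonal martingale differences, so that the $L^2$-norm is $\sum_l\mathbb{E}[R_l^2]$. Each $\mathbb{E}[R_l^2]$ is then expressed via Remark~\ref{rmk2.8} as a difference of expectations under two laws on $(\vec X,\vec Y)$: one where they form a true 4-point motion up to time $lt$, and one where after time $(l-1)t$ the two pairs evolve as \emph{independent} 2-point motions. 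The key observation is that these laws agree on the event $A_l^c$ that no $X$-coordinate meets a $Y$-coordinate during $((l-1)t,lt]$, so only $1_{A_l}$-weighted increments survive; the Lipschitz bound on $G$ then yields $\mathbb{E}[R_l^2]\le Cn^{-2}+Cn^{-2/3}P(A_l)$, and Lemma~\ref{lem-guji2} sums this to $o(1)$. This coupling-on-$A_l^c$ step is the substantive idea you have not supplied, and without it the plan is incomplete.

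Your fallback does not work as stated. The increments $M^\ell_{knt/K}-M^\ell_{(k-1)nt/K}$ are martingale differences, hence orthogonal, but they are \emph{not} independent: the $k$-th increment depends on the quenched position $E^\omega X^{x_\ell,-nt}_{-nt+(k-1)nt/K}$, which is a function of the environment on all earlier intervals. So ``independence of the flow on disjoint intervals'' does not translate into independence of the summands, and a Lyapunov CLT for a fixed $K$ followed by $K\to\infty$ is not available. Letting $K$ grow with $n$ brings you back exactly to the paper's discrete scheme with $K=n$.
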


 \begin{proof}
  By the Cram\'er-Wold device, we only need to show that for each $\vec{\theta}\in\mathbb{R}^N$, $\sum_{i=1}^N\theta_i a_n(t,r_i)$ converges weakly to $\sum_{i=1}^N\theta_i a(t,r_i)$. Abbreviating \begin{equation}\label{notation4.3}
  X_{s}^{n.i}:=X^{r_i\sqrt{n}-\beta nt,-nt}_{-nt+s} ,
  \end{equation}
  we have the decomposition
  \begin{align}
  \nonumber \sum\limits_{i=1}^N\theta_i a_n(t,r_i) &=n^{-1/4} \sum\limits_{i=1}^N \theta_i \sum\limits_{k=1}^{n}E^{\omega}\big[ X_{kt}^{n.i}-X_{(k-1)t}^{n.i}-\beta t\big] \\
  &= \sum\limits_{k=1}^{n} \Big(n^{-1/4}\sum\limits_{i=1}^N \theta_i E^{\omega}\big[ X_{kt}^{n.i}-X_{(k-1)t}^{n.i}-\beta t\big]\Big),
  \end{align}
  If we define $z_{n,k}:=n^{-1/4}\sum_{i=1}^N \theta_i E^{\omega}\big[ X_{kt}^{n.i}-X_{(k-1)t}^{n.i}-\beta t\big]$ and the filtration
  $$\mathcal{F}_{n,k}:=\sigma\left(K_{u,v}:-nt\leq u\leq v\leq -nt+kt\right)~~~~k=0,1,2,\cdots,$$
  where $(K_{s,t})_{s\leq t}$ is the underlying Howitt-Warren flow,
  then $\sum_{i=1}^N\theta_i a_n(t,r_i)=\sum_{k=1}^{n}z_{n,k}$, and the process $(\sum_{k=1}^{j}z_{n,k})_{j\in\mathbb{N}}$ is adapted to $\{\mathcal{F}_{n,j}\}_{j\in\mathbb{N}}$, and $\mathbb{E}\left[z_{n,k}\big|\mathcal{F}_{n,k-1}\right]=0$. Therefore, $(\sum_{k=1}^j z_{n,k})_{j=0}^n$ is a martingale with respect to the filtration $(\mathcal{F}_{n,j})_{j=1}^n$, where $z_{n,k}$ is the martingale difference.
  
  Recall that our goal is to prove that $\sum_{k=1}^n z_{n,k}$ converges in distribution to a Gaussian. By the martingale central limit theorem \cite[Chapter 7, Theorem 7.3]{MR2722836}, it suffices to check that for any $\epsilon>0$, when $n\rightarrow\infty$,
  \begin{eqnarray}
   \label{mcltcond1}  (i)&\sum\limits_{k=1}^{n} \mathbb{E} \left[ z_{n,k}^2 | \mathcal{F}_{n,k-1} \right] \overset{\mathbb{P}}{\longrightarrow} \sum\limits_{1 \leq i , j \leq N} \theta_{i} \theta_{j} \Gamma\big((t,r_i),(t,r_j)\big), \\
   \label{mcltcond2} (ii)& \sum\limits_{k=1}^{n} \mathbb{E} \left[ z_{n,k}^2 \boldsymbol{1}_{ \{|z_{n,k}| \geq \epsilon \} } | \mathcal{F}_{n,k-1} \right] \overset{\mathbb{P}}{\longrightarrow} 0. 
  \end{eqnarray} 
  
  Note that to show Lindeberg's condition (\ref{mcltcond2}), it suffices to show the Lyapunov condition:
  \begin{equation}
  \sum\limits_{k=1}^{n} \mathbb{E} \left[ z_{n,k}^6 | \mathcal{F}_{n,k-1} \right] \overset{\mathbb{P}}{\longrightarrow} 0 ~~~~~~~~~({\rm as}~n\rightarrow\infty),
  \end{equation}
  which is implied by the following estimate:
  \begin{eqnarray}
  \nonumber\sum\limits_{k=1}^{n} \mathbb{E} \left[ z_{n,k}^6 \right] &=& n^{-3/2} \sum\limits_{k=1}^{n} \mathbb{E} \Big[\Big(\sum_{i=1}^N \theta_i E^{\omega}\big[ X_{kt}^{n.i}-X_{(k-1)t}^{n.i}-\beta t\big]\Big)^6 \Big] \\
  \nonumber&\leq& n^{-3/2} \sum_{i=1}^{N} N^5 \theta_i^6 \sum_{k=1}^{n} E\Big[\left(X_{kt}^{n.i}-X_{(k-1)t}^{n.i}-\beta t\right)^6\Big] \\
  &=& C(N,t,\vec{\theta}) n^{-1/2},
  \end{eqnarray}
  where in the last equality we used that $X_{kt}^{n.i}-X_{(k-1)t}^{n.i}$ is a Brownian motion with drift $\beta$ under the averaged law $P$ and therefore the 6th-moment is a constant depending on $t$.
  
 It only remains to check condition (\ref{mcltcond1}). First note that if given the environment $\omega$ we take independent copies $X^{1,n.i}$, $X^{2,n.i}$ of $X^{n,i}$, then under the averaged law $P$, $(X^{1,n.i},X^{2,n.j})$ is a Howitt-Warren $2$-point motion. Therefore, by Lemma \ref{variance},
 \begin{eqnarray} \label{mean-mart-diff}
 \nonumber \sum\limits_{k=1}^{n}\mathbb{E}\left[z^2_{n,k}\right] &=& n^{-1/2}\sum\limits_{1\leq i,j\leq N}\theta_i\theta_j\mathbb{E}\left[ E^{\omega} [ X_{nt}^{n.i}-r_i\sqrt{n}-\beta nt]E^{\omega} [ X_{nt}^{n.j}-r_i\sqrt{n}-\beta nt]\right] \\
 \nonumber &=& n^{-1/2}\sum\limits_{1\leq i,j\leq N}\theta_i\theta_j Cov(X^{1,n.i}_{nt},X^{2,n.i}_{nt})\\
 \nonumber&=& \sum\limits_{1\leq i,j\leq N}\theta_i\theta_j\Big\{n^{-1/2} G\big(\sqrt{n}(r_i-r_j),nt\big)+n^{-1/2}H\big(\sqrt{n}(r_i-r_j),nt\big) \Big\}\\
 \nonumber&=&\sum\limits_{1\leq i,j\leq N}\theta_i\theta_j\Big\{ \Gamma\big((t,r_i),(t,r_j)\big)+n^{-1/2}H\big(\sqrt{n}(r_i-r_j),nt\big) \Big\}\\
 \label{mean-mart-diff-limit}&\overset{n\rightarrow\infty}{\longrightarrow}& \sum\limits_{1\leq i,j\leq N}\theta_i\theta_j \Gamma\left((t,r_i),(t,r_j)\right).
 \end{eqnarray}

 Hence, to check condition (\ref{mcltcond1}), it suffices to show that as $n$ tends to $\infty$, 
 \begin{equation} \label{4.22}
 \sum_{k=1}^{n} \Big(\mathbb{E}\left[z_{n,k}^2|\mathcal{F}_{n,k-1}\right] -\mathbb{E}\left[z_{n,k}^2\right] \Big) \overset{\mathbb{P}} {\longrightarrow}0.
 \end{equation}
 Actually, we will show convergence in $L^2$. Rewrite the left-hand side of (\ref{4.22}) as
 \begin{eqnarray}\label{5A}
 \nonumber&&\sum\limits_{k=1}^{n}\mathbb{E}\left[z^2_{n,k}\big|\mathcal{F}_{n,k-1}\right]-\sum\limits_{k=1}^{n}\mathbb{E}\left[z^2_{n,k}\right] \\
 \nonumber&=&\sum\limits_{k=2}^{n} \sum\limits_{l=1}^{k-1}\Big( \mathbb{E}\left[z^2_{n,k}\big|\mathcal{F}_{n,l}\right]-\mathbb{E}\left[z^2_{n,k}\big|\mathcal{F}_{n,l-1}\right] \Big) \\
 \nonumber&=&\sum\limits_{l=1}^{n-1} \sum\limits_{k=l+1}^{n}\Big( \mathbb{E}\left[z^2_{n,k}\big|\mathcal{F}_{n,l}\right]-\mathbb{E}\left[z^2_{n,k}\big|\mathcal{F}_{n,l-1}\right] \Big) \\
 &=& \sum\limits_{l=1}^{n-1}R_l,
 \end{eqnarray}
 where 
 \begin{equation}\label{notationR_l}
 R_l:= \sum_{k=l+1}^{n}\Big( \mathbb{E}\left[z^2_{n,k}\big|\mathcal{F}_{n,l}\right]-\mathbb{E}\left[z^2_{n,k}\big|\mathcal{F}_{n,l-1}\right] \Big)
 \end{equation} 
 is a random variable determined by the environment up to time $lt-nt$. Consequently, for $l>l'$, $\mathbb{E}\left[R_lR_{l'}\right]=\mathbb{E}\left[\mathbb{E}\left[R_l\big|\mathcal{F}_{n,l-1}\right]R_{l'}\right]=0$. As a result, the $L^2$ norm of (\ref{5A}) is $\mathbb{E}\left[(\sum_{l=1}^{n-1}R_l)^2\right]= \sum_{l=1}^{n-1}\mathbb{E}\left[R_l^2\right]$.

 Before bounding the second moment of (\ref{5A}), let us carefully explain the meaning of some probability laws, which will be used throughout the following paragraphs. Let $\vec{X}^k$ (resp. $\vec{Y}^k$) be $k$ random motions independent under the quenched law $P^{\omega}$.
 Since the Howitt-Warren flow has independent-increments property, under the law $\mathbb{E}\big[P^{\omega}(\cdot)\big|\mathcal{F}_{n,l}\big]$, $k$ random motions $\vec{X}^k$ evolve in a fixed realization $\omega$ of the random environment before time $lt-nt$, and evolve in the averaged environment after $lt-nt$ (i.e., $\vec{X}^k$ is under the averaged law $P$ and evolves as a Howitt-Warren $k$-point motion). For convenience we introduce 
 \begin{equation}\label{notation1}
 P^{\omega}_l(\cdot):=\mathbb{E}\big[P^{\omega}(\cdot)\big|\mathcal{F}_{n,l}\big].
 \end{equation}
 We then explain the more complicated law
 \begin{equation}\label{notation2}
 P_l\big((\vec{X}^k,\vec{Y}^k)\in\cdot\big):=\mathbb{E}\big[P^{\omega}_l(\vec{X}^k\in\cdot)P^{\omega}_l(\vec{Y}^k\in\cdot)\big].
 \end{equation}
 To understand it, we only need to first couple $\vec{X}^k$ and $\vec{Y}^k$ such that they are independent under law $P^{\omega}_l$, and then average over the random environment $\omega$ before time $lt-nt$. As a result, under law $P_l$, $(\vec{X}^k,\vec{Y}^k)$ is a Howitt-Warren $2k$-point motion up to time $lt-nt$, and then splits into two sets of independent Howitt-Warren $k$-point motions.
 
 In this paragraph let $\vec{X}$ (resp. $\vec{Y}$) be two independent random motions $(X^{1,n,i_1},X^{2,n,i_2})$ (resp. $(Y^{1,n,i_3},X^{2,n,i_4})$), where the later notation is given in (\ref{notation4.3}), and define \begin{equation}
 I_l(\vec{x}):=G\big(x_1-x_2,(n-l)t\big)+H\big(x_1-x_2,(n-l)t\big)
 \end{equation} 
 for short, where $G$ and $H$ are given in (\ref{2.33}). To bound the second moment of $R_l$, we consider the first summation in the right-hand side of (\ref{notationR_l}). By Remark \ref{rmk2.8} and the notation (\ref{notation1}),
 \begin{align}\label{4.15}
 \nonumber & \sum_{k=l+1}^{n} \mathbb{E} \left[z_{n,k}^2|\mathcal{F}_{n,l}\right]\\
 \nonumber &=
 n^{-1/2}\sum_{1\leq i,j\leq N} \theta_i \theta_j \mathbb{E}_{n,l}\left[E^{\omega}[X^{1,n.i}_{nt}-X^{1,n.i}_{lt}-(n-l)\beta t]E^{\omega}[X^{2,n.j}_{nt}-X^{2,n.j}_{lt}-(n-t)\beta t] \right] \\
 \nonumber &= n^{-1/2}\!\!\! \sum_{1\leq i,j\leq N}\!\!\!\ \theta_i \theta_j E^{\omega}_l \left[ G\big(X^{1,n.i}_{lt}-X^{2,n.j}_{lt},(n-l)t\big)+H\big(X^{1,n.i}_{lt}-X^{2,n.j}_{lt},(n-l)t\big)\right] \\
 &=  n^{-1/2} \sum_{1\leq i,j\leq N} \theta_i \theta_j E^{\omega}\big[I_l(\vec{X}_{lt}) \big],
 \end{align}
 where in the second equality we used the independence and the conditional covariance function given in Remark \ref{rmk2.8}, and in the last equality we changed $E^{\omega}_l$ to $E^{\omega}$ because $\vec{X}_{lt}$ only depends on the environment up to time $lt-nt$. Similarly, for the second summation in (\ref{notation2}),
 \begin{equation}\label{4.16}
 \sum_{k=l+1}^{n} \mathbb{E} \big[z_{n,k}^2|\mathcal{F}_{n,l-1}\big] = 
 n^{-1/2}\sum_{1\leq i,j\leq N} \theta_i \theta_j E^{\omega}_{l-1} \Big[ I_l\big(\vec{X}_{lt}\big)\Big].
 \end{equation}
 To consider the second moment of $R_l$, we note that
 \begin{eqnarray}
 \nonumber && \mathbb{E}\Big[\Big(E^{\omega}\big[I(\vec{X}_{lt})\big]-E_{l-1}^{\omega}\big[I(\vec{X}_{lt})\big] \Big)^2\Big] \\
 \nonumber&=& \mathbb{E}\Big[E^{\omega}\big[I(\vec{X}_{lt})\big]E^{\omega}\big[I(\vec{Y}_{lt})\big]\Big]-\mathbb{E}\Big[E^{\omega}_{l-1}\big[I(\vec{X}_{lt})\big]E^{\omega}_{l-1}\big[I(\vec{Y}_{lt})\big]\Big]\\
 \label{4.17}&=& E\big[I(\vec{X}_{lt})I(\vec{Y}_{lt})\big]-E_{l-1}\big[I(\vec{X}_{lt})I(\vec{Y}_{lt})\big],
 \end{eqnarray}
 where in the second equality we used the notation (\ref{notation2}). $(\vec{X},\vec{Y})$ in the first term of (\ref{4.17}) is a Howitt-Warren $4$-point motion. On the other hand, by the explanation below (\ref{notation2}), $(\vec{X},\vec{Y})$ in the second term of (\ref{4.17}) is a Howitt-Warren $4$-point motion before $(l-1-n)t$, and becomes two sets of independent Howitt-Warren $2$-point motions during time interval $[(l-1-n)t,(l-n)t]$. Since before time $(l-1-n)t$, law $P$ and $P_{l-1}$ are equal, we can subtract $I(\vec{X}_{(l-1)t})I(\vec{Y}_{(l-1)t})$ from both expectations of (\ref{4.17}). Therefore, (\ref{4.17}) can be rewritten as
 \begin{eqnarray}
 \label{4.18} &
 E\big[I(\vec{X}_{lt})I(\vec{Y}_{lt})-I(\vec{X}_{(l-1)t})I(\vec{Y}_{(l-1)t})\big]-E_{l-1}\big[I(\vec{X}_{lt})I(\vec{Y}_{lt})-I(\vec{X}_{(l-1)t})I(\vec{Y}_{(l-1)t})\big] \\
 \nonumber&= E\big[\big(I(\vec{X}_{lt})\!-\!I(\vec{X}_{(l-1)t})\big)\big(I(\vec{Y}_{lt})\!-\!I(\vec{Y}_{(l-1)t})\big)\big]\!-\!E_{l-1}\big[\big(I(\vec{X}_{lt})\!-\!I(\vec{X}_{(l-1)t})\big)\big(I(\vec{Y}_{lt})\!-\!I(\vec{Y}_{(l-1)t})\big)\big]
 \end{eqnarray}
 Now let $A_l$ denote the event that for some $s\in \big((l-1-n)t,(l-n)t\big]$, $\{X^{1,n.i_1}_s,X^{2,n.i_2}_s\}\cap\{Y^{1,n.i_3}_s,Y^{2,n.i_4}_s\}\neq\phi$. Notice that on the event $A_l^c$, the Howitt-Warren $4$-point motion evolves in the same way as two sets of independent Howitt-Warren $2$-point motions. Therefore, the averaged law $P$ and law $P_{l-1}$ are equal on the event $A_l^c$ for $(\vec{X}_{lt},\vec{Y}_{lt})$. Hence by (\ref{4.15})-(\ref{4.16}), the right-hand side of (\ref{4.18}) is equal to
 \begin{eqnarray}
 \nonumber  &&\!\!\!\!\!\!\!\!\!\!\!\!\! E\big[1_{A_l}\big(I(\vec{X}_{lt})\!-\!I(\vec{X}_{(l-1)t})\big)\big(I(\vec{Y}_{lt})\!-\!I(\vec{Y}_{(l-1)t})\big)\big]\!-\!E_{l-1}\big[1_{A_l}\big(I(\vec{X}_{lt})\!-\!I(\vec{X}_{(l-1)t})\big)\big(I(\vec{Y}_{lt})\!-\!I(\vec{Y}_{(l-1)t})\big)\big]\\
 \nonumber&&\!\!\!\leq E\big[1_{A_l}\big(I(\vec{X}_{lt})-I(\vec{X}_{(l-1)t})\big)^2\big]+ E\big[1_{A_l}\big(I(\vec{Y}_{lt})-I(\vec{Y}_{(l-1)t})\big)^2\big] + \tilde{E}\big[1_{A_l}\big(I(\vec{X}_{lt})-I(\vec{X}_{(l-1)t})\big)^2\big]\\
 \label{4.28}
 &&~+\tilde{E}\big[1_{A_l}\big(I(\vec{Y}_{lt})-I(\vec{Y}_{(l-1)t})\big)^2\big].
 \end{eqnarray}
 
 Recall that $I_l(\vec{x}):=G\big(x_1-x_2,(n-l)t\big)+H\big(x_1-x_2,(n-l)t\big)$, and by Lemma \ref{variance} (i) $\frac{\partial G}{\partial x}$ and $H$ are uniformly bounded by $2\nu$ and $6\nu^2$ respectively. Thereby,
  \begin{eqnarray}
  \nonumber 
  \big(I(\vec{X}_{lt})-I(\vec{X}_{(l-1)t})\big)^2 &\leq& \Big( 2\nu\big|X^{1,n.i_1}_{lt}-X^{1,n.i_1}_{(l-1)t}\big|+2\nu\big|X^{2,n.i_1}_{lt}-X^{2,n.i_1}_{(l-1)t}\big|+12\nu^2 \Big)^2 \\
  &\leq& 12\nu^2\big|X^{1,n.i_1}_{lt}-X^{1,n.i_1}_{(l-1)t}\big|^2+12\nu^2\big|X^{2,n.i_2}_{lt}-X^{2,n.i_2}_{(l-1)t}\big|^2+C_{\nu}.
  \end{eqnarray}
  Since $X^{1,n,i_1}$ and $X^{2,n,i_2}$ are Brownian motions with drift $\beta$ under both $P$ and $\tilde{P}$,
  \begin{eqnarray}
  \nonumber && E\Big[1_{A_l}\big(I(\vec{X}_l)-I(\vec{X}_{l-1})\big)^2\Big]\\
  \nonumber &\leq& 12\nu^2E\Big[1_{A_l}\big|X^{1,n.i_1}_{lt}-X^{1,n.i_1}_{(l-1)t}\big|^2\Big]+12\nu^2E\Big[1_{A_l}\big|X^{2,n.i_2}_{lt}-X^{2,n.i_2}_{(l-1)t}\big|^2\Big]+C_{\nu}P(A_l) \\
  &\leq& 24\nu^2E\Big[ \big(X^{1,n.i_1}_{lt}-X^{1,n.i_1}_{(l-1)t}\big)^2 1_{|X^{1,n.i_1}_{lt}-X^{1,n.i_1}_{(l-1)t}|> n^{1/6}}\Big] +\big(24\nu^2 n^{1/3}+C_{\nu}\big)P(A_l),
  \end{eqnarray}
  where in the second inequality we decomposed the expectation into two parts: $|X^{1,n.i_1}_{lt}-X^{1,n.i_1}_{(l-1)t}|> n^{1/6}$ and $|X^{1,n.i_1}_{lt}-X^{1,n.i_1}_{(l-1)t}|\leq n^{1/6}$, and in the first part we used $1_{A_l}\leq 1$, while in the second part we bounded the random motion directly by $n^{1/6}$. Since the tail probability of a Brownian motion at a fixed time has a Gaussian decay, when $n$ is large enough, $$E\Big[ \big(X^{1,n.i_1}_{lt}-X^{1,n.i_1}_{(l-1)t}\big)^2 1_{|X^{1,n.i_1}_{lt}-X^{1,n.i_1}_{(l-1)t}|> n^{1/6}}\Big]<n^{-1}.$$
  Hence we have the bound
  \begin{equation}\label{4.30}
  E\Big[1_{A_l}\big(I(\vec{X}_{lt})-I(\vec{X}_{(l-1)t})\big)^2\Big] \leq Cn^{-1}+Cn^{1/3}P(A_l).
  \end{equation}
  With the same argument, this estimate also holds for the other three terms in (\ref{4.28}). Moreover, by Lemma \ref{lem-guji2}, $\sum_{l=1}^{n-1}P(A_l)=o(n^{1/2+\alpha})$ for all $\alpha>0$. Here we take $\alpha=1/6$ and then as $n\rightarrow\infty$, by (\ref{4.17}), (\ref{4.28}) and (\ref{4.30}),
 
 \begin{eqnarray}\label{4.23}
 \nonumber \sum_{l=1}^{n-1}\mathbb{E}\left[R_l^2\right] &=& n^{-1} \sum_{l=1}^{n-1}\sum_{i_1,i_2,i_3,i_4} \theta_{i_1}\theta_{i_2}\theta_{i_3}\theta_{i_4} \mathbb{E}\Big[\Big(E^{\omega}\big[I(\vec{X}_l)\big]-\mathbb{E}_{n,l-1}E^{\omega}\big[I(\vec{X}_l)\big] \Big)^2\Big] \\
 &\leq& C'\sum_{l=1}^{n-1}n^{-2}+C'n^{-2/3}\sum_{l=1}^{n-1}P(A_l)  =o(1)
 \end{eqnarray}

 This verifies condition (\ref{mcltcond1}), and therefore completes the proof of Lemma \ref{one-time-level}.
 \end{proof}
 \end{subsection}
 
 \begin{subsection}{Multiple time-levels}
 In this subsection we utilize the decomposition (\ref{5G}) to finish the second step stated at the beginning of this section, and thus finish the proof of Theorem \ref{GaussianlimitThm}.
 \begin{proof}[Proof of Theorem \ref{GaussianlimitThm}]
 In this step we use induction to show the convergence of the finite dimensional distributions.
 
 Assume that for some $M\in\mathbb{N}^+$, 
 \begin{equation}\label{4.24}
 \big(a_n(t_i,r_j):1\leq i\leq M, 1\leq j\leq N\big) \overset{d}{\Rightarrow}\big(a(t_i,r_j):1\leq i\leq M, 1\leq j\leq N\big)
 \end{equation}
 on $\mathbb{R}^{NM}$ for any finite $N$, $0\leq t_1<\cdots<t_M$ and $r_1<\cdots<r_N$.
 
 When $M=1$, it is just Lemma \ref{one-time-level}. It remains to show that (\ref{4.24}) also holds for $M+1$ time levels. Let $0\leq t_1<\cdots<t_M<t_{M+1}$. By the Cram\'er-Wold device, it suffices to prove that for any $(M+1)N$ vector $(\theta_{i,j})$,
 \begin{equation}
 \sum\limits_{1\leq i\leq M+1}\sum_{1\leq j\leq N}\theta_{i,j}a_n(t_i,r_j)\overset{d}{\Longrightarrow} \sum\limits_{1\leq i\leq M+1}\sum_{1\leq j\leq N}\theta_{i,j}a(t_i,r_j).
 \end{equation}
 
 For Borel sets $B\in\mathcal{B}$, let $\Delta:=t_{M+1}-t_M$, and denote the probability measures
 \begin{equation}
 p^{\omega}_{n,j}(B):=P^{\omega}\left(X^{r\sqrt{n}-\beta nt,-nt}_{-n(t-\Delta)}+\beta n\Delta\in B\right).
 \end{equation}
 Define
 $
 \tilde{a}_n(\Delta,r):=a_n(\Delta,r)\circ T_{-nt_M,-\beta nt_{M}}.
 $
 By the decomposition (\ref{5G}),
 \begin{equation}\label{5I}
 a_n(t_{M+1},r_j)=\int a_n(t_M,\frac{z}{\sqrt{n}})p^{\omega}_{n,j}(dz)+\tilde{a}_n(\Delta,r_j).
 \end{equation}
 In order to apply Lemma \ref{one-time-level}, we need to discretize the integral in (\ref{5I}). Given $A>0$, define a partition $\Pi$ of $[-A,A]$ by 
 \begin{equation}
 -A=u_0<u_1<\cdots<u_L=A.
 \end{equation}
 with mesh size $\delta=\max_{1\leq l\leq L}\{u_{l}-u_{l-1}\}$.Then
 \begin{eqnarray}
 a_n(t_{M+1},r_j)=\sum_{l=1}^{L}a_n(t_M,u_l)p_{n,j}^{\omega}\big((u_{l-1}\sqrt{n},u_l\sqrt{n}]\big)+\tilde{a}_n(s,r_j)+R_{n,j}(A),
 \end{eqnarray}
 where the error term $R_{n,j}(A)$ is given by
 \begin{eqnarray}
 \label{R_nj1}R_{n,j}(A)&=&\sum_{l=1}^{L}\int\limits_{(u_{l-1}\sqrt{n},u_l\sqrt{n}]} \left(a_n(t_M,\frac{z}{\sqrt{n}})- a_n(t_M,u_l)\right)p_{n,j}^{\omega}(dz) \\
 \label{R_nj2}&&+\left(\int\limits_{(-\infty,-A\sqrt{n}]}+\int\limits_{(A\sqrt{n},\infty)}\right) a_n(t_M,\frac{z}{\sqrt{n}})p_{n,j}^{\omega}(dz)
 \end{eqnarray}
 Let $R_n(A)=\sum_j\theta_{M+1,j}R_{n,j}(A)$, then we can rewrite
 \begin{align}\label{5J}
 \nonumber&~~~\sum\limits_{1\leq i\leq M+1}\sum_{1\leq j\leq N}\theta_{i,j}a_n(t_i,r_j) \\
 &=\sum\limits_{1\leq i\leq M}\sum_{1\leq k\leq K}\rho^{\omega}_{n,i,k}
 a_n(t_i,q_k)
 +\sum_{1\leq j\leq N}\theta_{M+1,j}\tilde{a}_n(s,r_j) + R_n(A).
 \end{align}
 In the above the spatial points $\{q_k\}$ are a relabeling of $\{r_j,u_l\}$, and the $\omega$-dependent coefficients $\rho^{\omega}_{n,i,k}$ consist of constants $\theta_{i,j}$, zeros and probabilities $p_{n,j}^{\omega}\big((u_{l-1}\sqrt{n},u_l\sqrt{n}]\big)$. By the quenched invariance principle Theorem \ref{qIP}, the constant limits $\rho^{\omega}_{n,i,k} \rightarrow\rho_{i,k}$ exist $\mathbb{P}$-a.s. as $n\rightarrow\infty$.
 
 To consider the limit $a(t,r)$, let $\tilde{a}(\Delta,\cdot)$ be a random function which is an independent copy of $a(\Delta,\cdot)$. By checking how the constants $\rho_{i,k}$ arise,
 \begin{eqnarray}\nonumber
 &&\sum\limits_{1\leq i\leq M}\sum\limits_{1\leq k\leq K}\rho_{i,k} a(t_i,q_k)+\sum\limits_{1\leq j\leq N}\theta_{M+1,j}\tilde{a} (\Delta,r_j)
 = \sum\limits_{1\leq i\leq M}\sum\limits_{1\leq j\leq N}\theta_{i,j} a(t_i,r_j) \\
 \label{4.33}&&+\sum_{1\leq j\leq N}\theta_{M+1,j}\Big(\sum_{l=1}^{l=L}\int_{u_{l-1}}^{u_l}a(t_M,r_j+u_l)\phi(z/\sqrt{\Delta})dz+\tilde{a}(\Delta,r_j)\Big)
 \end{eqnarray}
 
 Showing the weak convergence of the linear combination in (\ref{5J}) is equivalent to showing that for any bounded Lipschitz function $f$ on $\mathbb{R}$, (\ref{diff0}) below vanishes as $n$ tends to $\infty$. Note that
 \begin{eqnarray}
 \label{diff0}&&\mathbb{E}f\Bigg(\sum\limits_{1\leq i\leq M+1}\sum_{1\leq j\leq N}\theta_{i,j}a_n(t_i,r_j)\Bigg) - Ef\Bigg(\sum\limits_{1\leq i\leq M+1}\sum_{1\leq j\leq N}\theta_{i,j}a(t_i,r_j)\Bigg) \\
 \nonumber&=&~~\Bigg\{\mathbb{E}f\Bigg(\sum\limits_{1\leq i\leq M+1}\sum_{1\leq j\leq N}\theta_{i,j}a_n(t_i,r_j)\Bigg) \\
 \label{diff1}&&~~~~- \mathbb{E}f\Bigg(\sum\limits_{1\leq i\leq M}\sum_{1\leq k\leq K}\rho^{\omega}_{n,i,k} a_n(t_i,q_k)+\sum_{1\leq j\leq N} \theta_{M+1,j}\tilde{a}_n(\Delta,r_j)\Bigg)\Bigg\} \\
 \nonumber&&+ \Bigg\{\mathbb{E}f\Bigg(\sum\limits_{1\leq i\leq M}\sum_{1\leq k\leq K}\rho^{\omega}_{n,i,k} a_n(t_i,q_k)+\sum_{1\leq j\leq N} \theta_{M+1,j}\tilde{a}_n(\Delta,r_j)\Bigg) \\
 \label{diff2}&&~~~~ - Ef\Bigg(\sum\limits_{1\leq i\leq M}\sum\limits_{1\leq k\leq K}\rho_{i,k} a(t_i,q_k)+\sum\limits_{1\leq j\leq N}\theta_{M+1,j}\tilde{a} (\Delta,r_j)\Bigg)\Bigg\} \\
 \nonumber&&+\Bigg\{Ef\Bigg(\sum\limits_{1\leq i\leq M}\sum\limits_{1\leq k\leq K}\rho_{i,k} a(t_i,q_k)+\sum\limits_{1\leq j\leq N}\theta_{M+1,j}\tilde{a} (\Delta,r_j)\Bigg) \\
 \label{diff3}&&~~~~- Ef\Bigg(\sum\limits_{1\leq i\leq M+1}\sum_{1\leq j\leq N}\theta_{i,j}a(t_i,r_j)\Bigg)\Bigg\}.
 \end{eqnarray} 
 It remains to show that the three differences (\ref{diff1})-(\ref{diff3}) all converge to zero.
 
 By the Lipschitz continuity of $f$ and the decomposition (\ref{5J}), the difference (\ref{diff1}) is bounded by
 \begin{equation}
 C_f\mathbb{E}\left|R_n(A)\right|,
 \end{equation}
 where $C_f$ is the Lipschitz constant of $f$. To bound $R_n(A)$, it suffices to bound each $R_{n,j}(A)$, for which we will deal with the terms in (\ref{R_nj1}) and (\ref{R_nj2}) separately.
 First by Lemma \ref{variance}, the covariance of $(a_n(t,r))$ is given by
 \begin{equation}
 \mathbb{E}\left[a_n(t,r)a_n(t,q)\right]=n^{-1/2}\Big(G\big(\sqrt{n}(r-q),nt\big)+H\big(\sqrt{n}(r-q),nt\big)\Big),
 \end{equation}
 which, together with the fact that $\left|\frac{\partial G}{\partial x}(x,st)\right|\leq 2\nu$ and $H(x,t)\leq 6\nu^2$ uniformly in $(x,t)$ (see Lemma \ref{variance}), implies that
 \begin{eqnarray}
 \nonumber\mathbb{E}\left[\big(a_n(t,r)-a_n(t,q)\big)^2\right]&\leq& 2n^{-1/2}\Big(G\big(0,nt\big)-G\big(\sqrt{n}(r-q),nt\big)+12\nu\Big) \\
 &\leq& C_1\left|r-q\right|+C_2 n^{-1/2}. 
 \end{eqnarray}
 Using the independence of $a_n(t_M,r)$ and $p^{\omega}_{n,j}$ in (\ref{R_nj1}), the $L^1$ norm of (\ref{R_nj1}) can be bounded by
 \begin{eqnarray}
 \nonumber&&\mathbb{E}\left|\sum_{l=1}^{L}\int\limits_{(u_{l-1}\sqrt{n},u_l\sqrt{n}]} \left(a_n(t_M,\frac{z}{\sqrt{n}})- a_n(t_M,u_l)\right) p_{n,j}^{\omega}(dz) \right| \\
 \nonumber&\leq&\sum_{l=1}^{L}\left| \int\limits_{(u_{l-1}\sqrt{n}, u_l\sqrt{n}]}\left( \mathbb{E}\left[\left(a_n(t_M,\frac{z}{\sqrt{n}})- a_n(t_M,u_l)\right)^2\right]\right)^{1/2} \mathbb{E}\left[p_{n,j}^{\omega}(dz)\right] \right| \\
 &\leq& C'_1 \sqrt{\delta}+C'_2 n^{-1/4},
 \end{eqnarray}
 where $\delta$ is the mesh size of the partition $\Pi$.
 
 For the difference (\ref{R_nj2}), since $G(0,t)=\nu\sqrt{t}$ and 
 \begin{eqnarray}
 \nonumber\mathbb{E}\left[a^2_n(t,r)\right]= n^{-1/2}\Big(G\big(0,nt\big)+H\big(0,nt\big)\Big)
 \leq n^{-1/2}\big(\nu\sqrt{nt}+12\nu^2\big) = O(1),
 \end{eqnarray}
 we have
 \begin{eqnarray}
 \nonumber&&\mathbb{E}\left|\left(\int\limits_{(-\infty,-A\sqrt{n}]}+\int\limits_{(A\sqrt{n},\infty)}\right) a_n(t_M,\frac{z}{\sqrt{n}})p_{n,j}^{\omega}(dz)\right| \\
 \nonumber&\leq& \left(\int\limits_{(-\infty,-A\sqrt{n}]}+ \int\limits_{(A\sqrt{n},\infty)}\right) \left(\mathbb{E}\left[a^2_n(t_M,\frac{z}{\sqrt{n}})\right] \right)^{1/2} \mathbb{E}\left[p_{n,j}^{\omega}(dz)\right] \\
 &\leq& C P\left(\left|X_0^{z-\beta nt_M,-nt_M}\right|\geq A\sqrt{n}\right).
 \end{eqnarray}
 As a result, for any given $\epsilon>0$, we can first choose $A$ large enough and then $\triangle$ small enough so that the term (\ref{diff1}) satisfies
 \begin{equation}
 \limsup_{n\rightarrow\infty}[(\ref{diff1})] < \epsilon.
 \end{equation}
 
 To bound the difference (\ref{diff2}), we cite Lemma 5.3 in \cite{M.Bala.F.Rass.T.Sepp.2006}, which states as follows:
 \begin{lem}
 For any $k\in\mathbb{N}^+$, for each $n$, let $J_n=(J^1_n,\cdots,J^k_n)$, $X_n=(X^1_n,\cdots,X^k_n)$ and $Y_n$ be random variables in some probability space. If for each $n$, $X_n$ and $Y_n$ are independent, and marginally the weak convergences $J_n\Rightarrow j$, $X_n\Rightarrow X$ and $Y_n\Rightarrow Y$ hold, where $j$ is a constant $k$-vector, $X$ a random $k$-vector and $Y$ a random variable, then the weak convergence $J_n X_n+Y_n\Rightarrow jX+Y$ holds, where $X$ and $Y$ are independent.
 \end{lem}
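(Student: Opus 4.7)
The plan is to combine Slutsky's theorem with the standard fact that for independent sequences, marginal weak convergence lifts to joint weak convergence with independent limits. First I would note that the limit $j$ is deterministic, so $J_n \Rightarrow j$ is equivalent to $J_n \to j$ in probability, which lets me treat $J_n$ as asymptotically constant.

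Next I would write
\begin{equation*}
J_n X_n + Y_n = j\cdot X_n + Y_n + (J_n - j)\cdot X_n,
\end{equation*}
reading $J_n X_n$ as the inner product that appears in (\ref{5J}). The cross term $(J_n - j)\cdot X_n$ vanishes in probability, since $J_n - j \to 0$ in probability and $X_n$ is tight (being weakly convergent), so each coordinatewise product converges to zero in probability by the usual Slutsky argument. By Slutsky it therefore suffices to show $j\cdot X_n + Y_n \Rightarrow j\cdot X + Y$ with $X$ and $Y$ independent.

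For this I would invoke that $X_n \perp Y_n$ for every $n$ means the joint law is the product $\mathcal{L}(X_n)\otimes\mathcal{L}(Y_n)$, and product measures converge weakly whenever the marginals do (checked against tensor-product bounded continuous test functions and extended by a Stone--Weierstrass density argument, together with tightness of the marginals yielding tightness of the product). Thus $(X_n,Y_n) \Rightarrow (X,Y)$ with $X \perp Y$, and the continuous mapping theorem applied to the continuous map $(x,y) \mapsto j\cdot x + y$ completes the argument.

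I do not expect any serious obstacle here: each step is routine weak-convergence bookkeeping. The only point of genuine content is the promotion of marginal convergence to joint convergence under independence, and this is precisely why the hypothesis $X_n \perp Y_n$ for each $n$ appears in the statement -- it is what guarantees that the limits $X$ and $Y$ remain independent, as asserted in the conclusion.
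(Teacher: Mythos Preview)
Your argument is correct and follows the standard route: decompose, use Slutsky to dispose of the $(J_n-j)\cdot X_n$ term via tightness of $X_n$, lift marginal convergence of $(X_n,Y_n)$ to joint convergence via independence, and finish with continuous mapping. One minor remark: the promotion of marginal to joint convergence under independence is most quickly seen by factorising the joint characteristic function rather than invoking Stone--Weierstrass, but either works.

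The paper does not prove this lemma at all; it merely cites it as Lemma~5.3 of \cite{M.Bala.F.Rass.T.Sepp.2006}. So there is no ``paper's own proof'' to compare against, and your self-contained argument is an appropriate substitute.
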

 
 Now note that in (\ref{diff2}) $\rho^{\omega}_{n,i,k}\rightarrow\rho_{i,k}$ $\mathbb{P}$-a.s., hence in distribution. By the induction assumption (\ref{4.24}), $\{a_n(t_i,q_k):1\leq i\leq M,1\leq k\leq K \}$ converges weakly to $\{a(t_i,q_k):1\leq i\leq M,1\leq k\leq K \}$, and by Lemma \ref{one-time-level} $\{\tilde{a}_n(s,r_j):1\leq j\leq N\}$ converges weakly to $\{\tilde{a}(s,r_j):1\leq j\leq N\}$. Moreover, for each $n$, $a(t_i,q_k)$ is independent of $\tilde{a}(s,r_j)$. This implies
 \begin{equation}
 \lim_{n\rightarrow\infty}[(\ref{diff2})]=0.
 \end{equation}
 
 To bound difference (\ref{diff3}), the method is the same as for (\ref{diff1}). By Proposition \ref{prop1}, there is a representation (equal in finite dimensional distributions) of $a(t_{M+1},r_j)$ given by 
 \begin{equation}\label{4.45}
 a(t_{M+1},r_j):=\int a(t_M,r+z)\phi(z/\sqrt{\Delta})dz+\tilde{a}(\Delta,r_j).
 \end{equation}
 Substitute (\ref{4.33}) and (\ref{4.45}) into the first and second term of the difference (\ref{diff3}). Again since $f$ is Lipschitz, under the same partition $\Pi$ we have a similar error term as $R_n(A)$ in (\ref{5J}). Recall the covariance function $\Gamma((t,r),(t,q))= G(r-q,t)$ of $a(t,r)$. We also have
 \begin{eqnarray}
 E\left[\big(a(t,r)-a(t,q)\big)^2\right]\leq C_1|r-q|,
 \end{eqnarray}
 \begin{equation}
 E\left[a^2(t,r)\right]\leq C_2,
 \end{equation}
 which allows us to bound the error term with the same method as for (\ref{diff1}). Therefore, if we take large enough $A$ of the partition $\Pi$ and then make the mesh $\delta$ small enough, then
 \begin{equation}
 \limsup_{n\rightarrow\infty}[(\ref{diff3})]< \epsilon.
 \end{equation}
 
 In sum, given any bounded and Lipschitz continuous function $f$ and $\epsilon>0$, by choosing suitable partition $\Pi$,
 \begin{equation}
 \limsup_{n\rightarrow\infty}\left|\mathbb{E}f\Bigg(\sum\limits_{1\leq i\leq M+1}\sum_{1\leq j\leq N}\theta_{i,j}a_n(t_i,r_j)\Bigg) - Ef\Bigg(\sum\limits_{1\leq i\leq M+1}\sum_{1\leq j\leq N}\theta_{i,j}a(t_i,r_j)\Bigg)\right|<2\epsilon.
 \end{equation}
 This complete the proof of Theorem \ref{GaussianlimitThm}.
 \end{proof} 
 \end{subsection}
 
 \end{section}
 
 \begin{section}{Proof of Theorem \ref{final}}

 Before going into the proof, we do some analysis on $z_n(t,r)$ given in (\ref{DSPzn}). Denote $x(n,r):=nx_0+r\sqrt{n}$, then by the definition of $\zeta_t(x)$ in (\ref{DSP}),
 \begin{eqnarray}
 \label{4A}z_n(t,r) 
 &=& n^{-1/4} \left\{ \int f^{(n)}(y)K_{-nt,0}^{\omega} (x(n,r)-\beta nt,dy)-f^{(n)}(x(n,r)) \right\}\\
 \label{4B}&& +  n^{-1/4} \left\{ \int W(y)K_{-nt,0}^{\omega} (x(n,r)-\beta nt,dy)-W(x(n,r))\right\} \\
 \label{decomp.z_n}&=:& n^{-1/4} U_n(t,r)+n^{-1/4}V_n(t,r),
 \end{eqnarray}
 where $(K^{\omega}_{s,t})_{s\leq t}$ is the Howitt-Warren flow.
 We will consider the processes $U_n$ and $V_n$ separately. 
 
 To analyze $U_n(t,r)$, we break the domain of the integral in line (\ref{4A}) into two parts: $(-\infty,x(n,r))$ and $[x(n,r),\infty)$. Note that $f^{(n)}(x(n,r))=\int f^{(n)}\big(x(n,r)\big)K_{-nt,0}^{\omega} (x(n,r)-\beta nt,dy)$ and recall that $f^{(n)}(x)=nf(\frac{x}{n})$ in Assumption I. The integral over $[x(n,r),\infty)$ is equal to
 \begin{eqnarray}\label{5.4}
 \nonumber&&\int\limits_{[x(n,r),\infty)}\Big\{\int_{x(n,r)}^{y}f'(\frac{z}{n})dz \Big\} K_{-nt,0}^{\omega} (x(n,r)-\beta nt,dy) \\
 \nonumber&=& \int_{x(n,r)}^{\infty}f'(\frac{z}{n}) P^{\omega}\left(X^ {x(n,r)-\beta nt,-nt}_0>z\right) dz \\
 \nonumber&=& \int_{x(n,r)}^{\infty}f'(x_0) P^{\omega}\left(X^ {x(n,r)-\beta nt,-nt}_0>z\right) dz +R_n \\
 &=& f'(x_0) E^{\omega} \left[\left(X^{x(n,r)-\beta nt,-nt}_0- x(n,r)\right)^+\right] + R_n,
 \end{eqnarray}
 where $X^ {x(n,r)-\beta nt,-nt}$ is a random motion in the Howitt-Warren flow, and $R_n$ is the remainder dominated by
 \begin{eqnarray}\label{5.5}
 \nonumber R_n &=& \int_{x(n,r)}^{\infty}\bigg(f'(\frac{z}{n})-f'(x_0)\bigg) P^{\omega}\left(X^ {x(n,r)-\beta nt,-nt}_0>z\right) dz  \\
 \nonumber&\leq&\int_{x(n,r)}^{x(n,r)+n^{1/2+\delta}} \left|f'(\frac{z}{n})-f'(x_0)\right|dz  + C_0\int_{x(n,r)+n^{1/2+\delta}}^ {\infty} P^{\omega}\left(X^ {x(n,r)-\beta nt,-nt}_0>z\right)dz \\
 &=:& R_{n1}+R_{n2},
 \end{eqnarray}
 where in the inequality we bounded the quenched probability by 1 for the first term, and by Assumption I we take a bound $C_0/2$ of $f'$ for the second term. In (\ref{5.5}) $\delta$ can be any positive constant and will be chosen in (\ref{5.6}).
 
 For $R_{n1}$, by the H\"older continuity of $f'$ with H\"older constant $C$ and H\"older exponent $\gamma>1/2$,
 \begin{equation}\label{5.6}
 R_{n1}\leq Cn^{1/2+\delta} \left|\frac{r\sqrt{n}+n^{1/2+\delta}}{n}\right|^{\gamma}= o(n^{1/4}),
 \end{equation}
 where we only need to take $0<\delta<(2\gamma-1)/(4\gamma+4)$.
 
 As for $R_{n2}$, we are going to bound the term $\sup_{(t,r)\in A} R_{n2}(t,r)$, where $A:=[0,T]\times[-Q,Q]$ for some $T,Q>0$. We first couple $\{X^{x(n,r)-\beta nt,-nt}:(t,r)\in A\}$ in the Howitt-Warren flow $(K_{s,t})_{s\leq t}$ such that two random motions coalesce as soon as they meet. We then couple the backward random motions $\{\hat{X}^{x,0}:x\in\mathbb{R} \}$ in the dual Howitt-Warren flow $(\hat{K}_{t,s})_{t\geq s}$, as introduced in Section 1.3. By the non-crossing property between $(K_{s,t})_{s\leq t}$ and $(\hat{K}_{t,s})_{t\geq s}$,
 \begin{equation}\label{5.7}
 P^{\omega}\Big(\sup_{(t,r)\in A}X^ {x(n,r)-\beta nt,-nt}_0>z\Big)=\hat{P}^{\omega}\Big( \inf_{0\leq s\leq T}\big(\hat{X}^{z,0}_{-ns}-(nx_0+Q\sqrt{n}-\beta ns) \big)<0 \Big),
 \end{equation}
 where $\hat{P}^{\omega}$ denotes the quenched law for $\hat{X}^{z,0}$ in the dual flow $(\hat{K}_{t,s})_{t\geq s}$. Therefore, for any $\epsilon>0$,
 \begin{eqnarray} \label{6B}
 \nonumber\mathbb{P}\Big(\sup_{(t,r)\in A} R_{n2}(t,r)>\epsilon\Big)\leq\mathbb{P}\Big[C_0\int_{nx_0-Q\sqrt{n}+n^{1/2+\delta}}^ {\infty} P^{\omega}\Big(\sup_{(t,r)\in A}X^ {x(n,r)-\beta nt,-nt}_0>z\Big)dz>\epsilon\Big] \\
 \leq \epsilon^{-1} C_0 \int_{nx_0-Q\sqrt{n}+n^{1/2+\delta}}^ {\infty} \hat{P}\Big(\inf_{0\leq s\leq T}\big(\hat{X}^{z,0}_{-ns}-(nx_0+Q\sqrt{n}-\beta ns) \big)<0\Big)dz,
 \end{eqnarray}
 where in the first inequality we extended the lower integral bound to $nx_0-Q\sqrt{n}+n^{1/2+\delta}$ and then bounded the supremum by moving the supremum inside the quenched probability, while in the second inequality we applied (\ref{5.7}) and then the Markov inequality.
 Note that $(\hat{X}^{z,0}_t-\beta t)_{t\leq 0}$ is a backward Brownian motion under the dual averaged law $\hat{P}$, thus by the reflection principle, the probability in the right-hand side of (\ref{6B}) is $2\big[1-\Phi\big((z-nx_0-Q\sqrt{n})/T\big)\big]$, where $\Phi$ is the Gaussian distribution function. Note that the function $1-\Phi(x)$ has a Gaussian decay.
 Moreover, when $z$ is in the integral domain of (\ref{6B}), we have \begin{equation}
 z-nx_0-Q\sqrt{n}>n^{1/2+\delta}-2Qn^{1/2}=O(n^{1/2+\delta}).
 \end{equation}
 Therefore, the right-hand side of (\ref{6B}) is summable. By the Borel-Cantelli lemma, for $\mathbb{P}$ a.e. $\!\omega$,
 \begin{equation}
 \sup_{(t,r)\in A}R_{n2}(t,r)\longrightarrow 0,
 \end{equation}
 as $n\rightarrow\infty$.
 Applying the same argument to the integral in (\ref{4A}) on the domain $(-\infty,x(n,r))$, we can get a similar result.
 Hence the following lemma holds:
 \begin{lem}\label{lemma5.2}
 For any $T,Q>0$ denote $A=[0,T]\times[-Q,Q]$, then $\mathbb{P}$-a.s.,
 \begin{equation}\label{5.10}
 \lim\limits_{n\rightarrow\infty} \sup\limits_{(t,r)\in A} n^{-1/4} \Bigg|U_n(t,r)-f'(x_0)E^{\omega}\left[X^{x(n,r)-\beta nt,-nt}_0- x(n,r)\right]\Bigg|=0.
 \end{equation}
 \end{lem}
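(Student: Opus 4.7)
The strategy is to complete in parallel the analysis already carried out in equations (5.4)--(5.9) for the integral over $[x(n,r),\infty)$, and combine it with the symmetric analysis on $(-\infty,x(n,r))$. The starting point is the identity
\begin{equation*}
U_n(t,r)=\!\!\int_{x(n,r)}^{\infty}\!\!\!f'(z/n)\,P^{\omega}\big(X^{x(n,r)-\beta nt,-nt}_0>z\big)\,dz\;-\!\!\int_{-\infty}^{x(n,r)}\!\!\!f'(z/n)\,P^{\omega}\big(X^{x(n,r)-\beta nt,-nt}_0<z\big)\,dz,
\end{equation*}
obtained by writing $f^{(n)}(y)-f^{(n)}(x(n,r))=\int_{x(n,r)}^{y}f'(z/n)\,dz$ and using Fubini. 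Replacing $f'(z/n)$ by $f'(x_0)$ inside both integrals and using $(a)^{+}-(-a)^{+}=a$ produces the main term $f'(x_0)E^{\omega}[X^{x(n,r)-\beta nt,-nt}_0-x(n,r)]$.

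The task therefore reduces to showing that the resulting remainders, obtained by replacing $f'(z/n)-f'(x_0)$ back, are $o(n^{1/4})$ uniformly in $(t,r)\in A$. As done in (5.5), I split each remainder into a near part over $[x(n,r)-n^{1/2+\delta},x(n,r)+n^{1/2+\delta}]$ and a far part over its complement. The near part is bounded deterministically by $Cn^{1/2+\delta}\sup_{|z|\leq n^{1/2+\delta}+Q\sqrt n}|f'((nx_0+z)/n)-f'(x_0)|$, which by the H\"older bound in Assumption I is at most $Cn^{1/2+\delta}(n^{\delta-1/2})^{\gamma}$; this is $o(n^{1/4})$ uniformly on $A$ provided $\delta<(2\gamma-1)/(4\gamma+4)$, which is possible because $\gamma>1/2$.

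The key obstacle is the uniform control of the far part, which requires handling the supremum over $(t,r)\in A$ simultaneously. I would follow the argument of (5.7)--(5.8): couple the forward family $\{X^{x(n,r)-\beta nt,-nt}:(t,r)\in A\}$ in the Howitt-Warren flow $(K_{s,t})_{s\leq t}$ so that paths coalesce when they meet, and couple the backward family $\{\hat X^{z,0}\}$ in the dual flow $(\hat K_{t,s})_{t\geq s}$. The non-crossing property between the forward and dual flows converts the event $\{\sup_{(t,r)\in A}X^{x(n,r)-\beta nt,-nt}_0>z\}$ into an event about the infimum of $\hat X^{z,0}_{-ns}-(nx_0+Q\sqrt n-\beta ns)$, and symmetrically for the left integral. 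Under the averaged dual law $(\hat X^{z,0}_t-\beta t)_{t\leq 0}$ is a Brownian motion, so the reflection principle gives a Gaussian bound of the form $2[1-\Phi(n^{\delta}/\sqrt{T})]$ after integrating out $z$ beyond $n^{1/2+\delta}$ from $x(n,r)$. This tail is summable in $n$, and Borel-Cantelli yields uniform almost sure convergence of the far remainder to $0$.

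Putting the two sides together and dividing by $n^{1/4}$ gives the claim. The single subtlety is that the uniform-in-$(t,r)$ bound demands the path-coupling and duality argument; once that is in place the remaining estimates are deterministic consequences of Assumption I and the Gaussian tail of a standard Brownian motion.
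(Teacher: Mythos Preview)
Your proposal is correct and follows essentially the same route as the paper: the paper carries out exactly the decomposition and estimates you describe in (5.4)--(5.9) for the right integral, then remarks that the same argument handles the left integral, which is precisely what you do. The only cosmetic difference is that you state the combined identity for $U_n$ at the outset, whereas the paper derives the $(\cdot)^{+}$ and $(\cdot)^{-}$ pieces separately before assembling them; the remainder analysis (near part via the H\"older bound with $\delta<(2\gamma-1)/(4\gamma+4)$, far part via the forward/dual coupling, non-crossing, reflection principle, and Borel--Cantelli) is identical.
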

 
 As to the process $V_n(t,r)$, we have a representation:
 \begin{eqnarray}
 \nonumber &&\int W(y)K_{-nt,0}^{\omega} (x(n,r)-\beta nt,dy)-W(x(n,r)) \\
 \nonumber &=& \int_{x(n,r)}^{\infty}\Big\{\int_{x(n,r)}^{\infty}1_{(x(n,r),y)}(s)dW_s\Big\} P^{\omega}\left(X^{x(n,r)-\beta nt,-nt}_0 \in dy\right)\\
 \nonumber &&- \int^{x(n,r)}_{-\infty}\Big\{\int^{x(n,r)}_{-\infty}1_{(y,x(n,r))}(s)dW_s\Big\} P^{\omega}\left(X^{x(n,r)-\beta nt,-nt}_0 \in dy\right)\\
 \label{4E}&=& \int_{x(n,r)}^{\infty} P^{\omega}\left(X^{x(n,r)-\beta nt,-nt}_0 > s \right)dW_s\\
 \label{4F}&& -\int^{x(n,r)}_{-\infty} P^{\omega}\left(X^{x(n,r)-\beta nt,-nt}_0 < s \right)dW_s,
 \end{eqnarray}
 where in the last equality we interchanged the Lebesgue integral and It\^o integral because of the following lemma and the fact that $E^{\omega}|X^{x(n,r)-\beta nt,-nt}_0|$ exists for $\mathbb{P}$-a.e. $\omega$.
 \begin{lem}
 Let $X$ be a random variable on the probability space $(\Omega,\mathcal{F} ,P)$ with $E|X|<\infty$ and independent of the Brownian motion $W_t$. Then for any constant $c$, the following interchange of Lebesgue integral and It\^o integral is permissible:
 \begin{equation} \label{4C}
 \int_{c}^{\infty}\int_{c}^{\infty}1_{(c,y)}(s)dW_s P(X\in dy) =
 \int_{c}^{\infty}\int_{c}^{\infty}1_{(c,y)}(s)P(X\in dy) dW_s.
 \end{equation}
 \end{lem}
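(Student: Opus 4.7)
The plan is to treat this as a classical stochastic Fubini identity, obtained by approximating $X$ by simple random variables and passing to the limit. The first step is to check that both sides of \eqref{4C} are well defined. Because $X$ is independent of $W$, the inner integrand $s\mapsto\int_c^\infty 1_{(c,y)}(s)\,P(X\in dy)=P(X>s)$ on the right-hand side is deterministic, so the Itô integral there is in fact a Wiener integral whose $L^2$-norm squared equals $\int_c^\infty P(X>s)^2\,ds\le \int_c^\infty P(X>s)\,ds=E[(X-c)^+]\le |c|+E|X|<\infty$. Symmetrically, by Tonelli, the $L^1$-norm of the left-hand side is bounded by $\int_c^\infty E|W_y-W_c|\,P(X\in dy)=\sqrt{2/\pi}\int_c^\infty\sqrt{(y-c)^+}\,P(X\in dy)$, which is finite by $E|X|<\infty$ and the bound $\sqrt{a}\le 1+a$.

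Next I would verify \eqref{4C} for simple $X$. If $X$ takes only finitely many values $y_1,\dots,y_N>c$ with probabilities $p_1,\dots,p_N$, then the left-hand side equals $\sum_i p_i(W_{y_i}-W_c)$, while the right-hand side is $\int_c^\infty\sum_i p_i 1_{(c,y_i)}(s)\,dW_s=\sum_i p_i(W_{y_i}-W_c)$ by linearity of the Itô integral on deterministic step integrands. Atoms at values $\le c$ contribute zero to both sides, so they can be ignored.

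For general $X$ with $E|X|<\infty$, I would approximate $X$ by simple random variables $X_n$ via a standard dyadic truncation-and-binning construction on $(c,\infty)$, chosen so that $X_n\to X$ in distribution, $P(X_n>s)\to P(X>s)$ at every continuity point of the CDF of $X$, and $P(X_n>s)$ is dominated uniformly in $n$ by an integrable function on $[c,\infty)$. Applying the simple case to each $X_n$ and passing to the limit, convergence of the left-hand side follows by dominated convergence using the $L^1$-bound above. For the right-hand side, the Wiener isometry gives
\[
E\Big|\int_c^\infty\bigl(P(X_n>s)-P(X>s)\bigr)\,dW_s\Big|^2=\int_c^\infty\bigl(P(X_n>s)-P(X>s)\bigr)^2\,ds,
\]
which tends to zero by dominated convergence. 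Matching the two limits yields \eqref{4C} in $L^1$, hence almost surely after passing to a subsequence.

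The main subtlety is arranging the approximating sequence $X_n$ so that the $L^1$-dominant for the left-hand side and the $L^2$-dominant for $P(X_n>s)$ are both uniform in $n$; the dyadic construction handles this cleanly, and no other step is technically difficult. The essential reason the argument is clean is precisely that independence of $X$ and $W$ reduces the right-hand side to a Wiener integral with deterministic integrand, bypassing the general stochastic Fubini theorem.
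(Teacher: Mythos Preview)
Your proposal is correct and follows essentially the same route as the paper: approximate $X$ by a dyadic discretization, observe that the identity is trivial for the resulting simple random variable, and then pass to the limit on each side (the paper shows the common discrete sum converges to the left-hand side almost surely via local H\"older continuity of $W$, while you argue $L^1$-convergence; both handle the right-hand side by the It\^o/Wiener isometry and dominated convergence). The only cosmetic difference is organizational---you first state the simple case as a separate step---so there is nothing substantively new to compare.
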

 \begin{proof}
 Without loss of the generality, we assume $c=0$.
 The left side of (\ref{4C}) can be approximated a.s. by 
 \begin{equation}\label{4D}
 \sum\limits_{k=0}^{\infty} P\left(k/2^n<X\leq (k+1)/2^n\right)\int_{0}^{\infty}1_{(0,k/2^n)} (s)dW_s,
 \end{equation}
 since a.s., Brownian motion is $\gamma$-H\"older continuous for all $\gamma<1/2$ and $P$ is a probability measure. On the other hand, since
 \begin{eqnarray}
 \nonumber&&\left|\int_{0}^{\infty}1_{(0,y)}(s)P(X\in dy)- \sum\limits_{k=0}^{\infty} P\left(\frac{k}{2^n}<X\leq \frac{k+1}{2^n}\right) 1_{(0,\frac{k}{2^n})} (s)\right|\\
 &\leq& \sum\limits_{k=0}^{\infty} P\left(\frac{k}{2^n}<X\leq \frac{k+1}{2^n}\right) 1_{(\frac{k}{2^n},\frac{k+1}{2^n})} (s),
 \end{eqnarray}
 the second moment of the difference of (\ref{4D}) and the right-hand side of (\ref{4C}) is bounded by
 \begin{eqnarray}
 \int_{0}^{\infty} \sum\limits_{k=0}^{\infty} P^2\left(\frac{k}{2^n}<X\leq \frac{k+1}{2^n}\right) 1_{(\frac{k}{2^n},\frac{k+1}{2^n})} (s) ds,
 \end{eqnarray}
 which is dominated by $\int_{0}^{\infty}P(X>s/2)ds\leq2E|X|<\infty$. Therefore, by dominated convergence theorem (\ref{4D}) converges to the right-hand side of (\ref{4C}) in $L^2$ as $n\rightarrow\infty$, and this shows that the interchange is permissible.
 \end{proof}
 With the form of (\ref{4E})-(\ref{4F}), it is clear that for $\mathbb{P}$-a.e.$~\omega$, $V_n(t,r)$ is a Gaussian process. Hence conditional on the random environment $\omega$ and integrating out the Brownian motion $W$, the conditional covariance of $V_n(t,r)$ is the same as the $L_2$ norm of the It\^o integral (\ref{4E})-(\ref{4F}) conditional on $\omega$, which is given by
 \begin{eqnarray} \label{5.17}
 \nonumber&& E^{\omega}\left[V_n(t,r)V_n(s,q) \right]\\ 
 \nonumber&=& \int_{x(n,r)\vee x(n,q)}^{\infty} P^{\omega}\left(X^{x(n,r) -\beta nt,-nt}_0 > z \right)P^{\omega}\left(X^{x(n,q)-\beta ns,-ns}_0 > z \right)dz \\
 \nonumber&&-1_{\{x(n,r)> x(n,q)\}} \int_{x(n,q)}^{x(n,r)} P^{\omega}\left(X^{x(n,r) -\beta nt,-nt}_0 < z \right) P^{\omega}\left(X^{x(n,q)-\beta ns,-ns}_0 > z \right)dz \\
 \nonumber&&-1_{\{x(n,r)< x(n,q)\}} \int^{x(n,q)}_{x(n,r)} P^{\omega}\left(X^{x(n,r) -\beta nt,-nt}_0 > z \right)P^{\omega}\left(X^{x(n,q)-\beta ns,-ns}_0 < z \right)dz \\
 \label{covIomega}&& +\int^{x(n,r)\wedge x(n,q)}_{-\infty} P^{\omega}\left(X^{x(n,r) -\beta nt,-nt}_0 < z \right)P^{\omega}\left(X^{x(n,q)-\beta ns,-ns}_0 < z \right)dz. 
 \end{eqnarray}
 By the quenched invariance principle Theorem \ref{qIP}, when $n$ tends to $\infty$, for $\mathbb{P}$-a.e. $\omega$,
 \begin{eqnarray}\label{5.18}
 P^{\omega}\big(X^{x(n,r) -\beta nt,-nt}_0 > nx_0+\sqrt{n}z \big)\longrightarrow P\big(W(t) > z-r \big), \\
 \label{5.19}
 P^{\omega}\big(X^{x(n,q) -\beta ns,-ns}_0 > nx_0+\sqrt{n}z \big)\longrightarrow P\left(W(s) > z-q \right),
 \end{eqnarray}
 where $W$ is a standard Brownian motion. Substituting (\ref{5.18})-(\ref{5.19}) into (\ref{5.17}) and noticing the expression of $\Gamma_0$ in (\ref{1.14}), we conclude that for $\mathbb{P}$-a.e. $\omega$,
 \begin{equation}\label{limitcovI_n}
 n^{-1/2}E^{\omega}\left[V_n(t,r)V_n(s,q) \right] \longrightarrow \Gamma_0((t,r),(s,q)) 
 \end{equation}
 as $n$ tends to $\infty$.
 With these useful observations, we utilize the following lemma cited from \cite[Lemma 7.1]{M.Bala.F.Rass.T.Sepp.2006} to show Theorem \ref{final}.
 
 \begin{lem}\label{lemma5.4}
 Let $(\Omega,\mathcal{F},\mathbb{P})$ and $(\Xi,\mathcal{G},\boldsymbol{P})$ be two probability spaces. On the product space $(\Omega\times\Xi, \mathcal{F}\times\mathcal{G}, \mathbb{P}\times \boldsymbol{P})$, define two sequences of $\mathbb{R}^N$-valued random vectors $\boldsymbol{U}_n(\omega)$ and $\boldsymbol{V}_n(\omega,\xi)$, where $\boldsymbol{U}_n$ depends only on $\omega$. Denote the conditional probability measure $P^{\omega}=\delta_{\omega}\times\boldsymbol{P}$ given $\omega$. If $\boldsymbol{U}_n$ and $\boldsymbol{V}_n$ satisfy the following two conditions:
 \begin{enumerate}
 \item[(i)] As $n$ tends to $\infty$, $\boldsymbol{U}_n$ converges weakly to an $\mathbb{R}^N$-valued random vector $\boldsymbol{U}$;
 \item[(ii)] There exists an $\mathbb{R}^N$-valued random vector $\boldsymbol{V}$ such that for all $\lambda\in\mathbb{R}$,
 \begin{equation}
 E^{\omega}\left[e^{i\lambda\cdot\boldsymbol{V}_n}\right]\longrightarrow E\left[e^{i\lambda\cdot\boldsymbol{V}}\right]
 \end{equation}
 in $\mathbb{P}$-probability as $n$ tends to $\infty$;
 \end{enumerate}
 then $\boldsymbol{U}_n+\boldsymbol{V}_n$ converges weakly to $\boldsymbol{U}+\boldsymbol{V}$, where $\boldsymbol{U}$ and $\boldsymbol{V}$ are independent.
 \end{lem}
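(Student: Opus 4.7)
The plan is to prove joint weak convergence by showing pointwise convergence of characteristic functions and then invoking L\'evy's continuity theorem. The key structural fact I will exploit is that $\boldsymbol{U}_n$ is $\omega$-measurable, so conditioning on $\omega$ pulls $e^{i\lambda\cdot\boldsymbol{U}_n}$ outside of the inner integration against $\boldsymbol{P}$. Concretely, for each $\lambda\in\mathbb{R}^N$ I would write
\begin{equation*}
\varphi_n(\lambda)\;:=\;E\bigl[e^{i\lambda\cdot(\boldsymbol{U}_n+\boldsymbol{V}_n)}\bigr]\;=\;\mathbb{E}\Bigl[e^{i\lambda\cdot\boldsymbol{U}_n(\omega)}\,E^{\omega}\bigl[e^{i\lambda\cdot\boldsymbol{V}_n}\bigr]\Bigr],
\end{equation*}
and then aim to show $\varphi_n(\lambda)\to \mathbb{E}[e^{i\lambda\cdot\boldsymbol{U}}]\cdot E[e^{i\lambda\cdot\boldsymbol{V}}]$, which is precisely the characteristic function of the sum of two independent random vectors distributed as $\boldsymbol{U}$ and $\boldsymbol{V}$.

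To carry out the convergence, set $g_n(\omega):=E^{\omega}[e^{i\lambda\cdot\boldsymbol{V}_n}]$ and $g:=E[e^{i\lambda\cdot\boldsymbol{V}}]$; note that $g$ is a deterministic constant and $|g_n|,|g|\le 1$. I would split
\begin{equation*}
\varphi_n(\lambda)\;=\;\mathbb{E}\bigl[e^{i\lambda\cdot\boldsymbol{U}_n}\bigl(g_n(\omega)-g\bigr)\bigr]\;+\;g\cdot\mathbb{E}\bigl[e^{i\lambda\cdot\boldsymbol{U}_n}\bigr].
\end{equation*}
For the first summand, hypothesis (ii) gives $g_n\to g$ in $\mathbb{P}$-probability; since $|e^{i\lambda\cdot\boldsymbol{U}_n}(g_n-g)|\le 2$, the bounded convergence theorem applied along arbitrary subsequences (equivalently, Vitali's theorem, since the integrands are uniformly bounded) yields $\mathbb{E}|g_n-g|\to 0$, so this summand vanishes. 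For the second summand, hypothesis (i) together with continuity of $x\mapsto e^{i\lambda\cdot x}$ and the definition of weak convergence give $\mathbb{E}[e^{i\lambda\cdot\boldsymbol{U}_n}]\to \mathbb{E}[e^{i\lambda\cdot\boldsymbol{U}}]$. Combining the two limits produces the desired product structure.

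The limit characteristic function $\lambda\mapsto \mathbb{E}[e^{i\lambda\cdot\boldsymbol{U}}]\cdot E[e^{i\lambda\cdot\boldsymbol{V}}]$ is automatically continuous at $\lambda=0$ (each factor is a characteristic function, hence continuous and equal to $1$ at the origin), so L\'evy's continuity theorem applies and yields $\boldsymbol{U}_n+\boldsymbol{V}_n\Rightarrow \boldsymbol{U}+\boldsymbol{V}$, where on the limiting side the factorization of the characteristic function into the product of marginals forces $\boldsymbol{U}$ and $\boldsymbol{V}$ to be independent. There is no substantive obstacle; the only point requiring a touch of care is the passage from convergence of $g_n$ to $g$ in $\mathbb{P}$-probability to convergence in $L^1(\mathbb{P})$, which is immediate because the limit $g$ is a constant and the sequence is uniformly bounded by $1$, so the standard ``convergence in probability plus uniform integrability'' criterion applies.
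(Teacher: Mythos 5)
Your proof is correct and matches the approach the paper sketches in the remark following the lemma: condition on $\omega$ to factor out the $\boldsymbol{U}_n$ term, use uniform boundedness together with the hypothesis-(ii) convergence in $\mathbb{P}$-probability to replace $E^{\omega}[e^{i\lambda\cdot\boldsymbol{V}_n}]$ by its constant limit, and conclude via L\'evy's continuity theorem. The only cosmetic difference is that the paper's remark carries two separate frequency parameters $\theta$ and $\lambda$ (which would additionally yield joint convergence of the pair), whereas you use a single $\lambda$, which is all that is needed for convergence of the sum.
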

 \begin{remark}{\rm In Lemma \ref{lemma5.4}, the limit of $\boldsymbol{U}_n+\boldsymbol{V}_n$ exists and consists of two independent parts. This is because for any $\omega$, there is a common limit of $\boldsymbol{V}_n$ which is not dependent on $\omega$, while $\boldsymbol{U}_n$ converges and only depends on $\omega$. The proof of the lemma is also straightforward, where we only need to show that the difference of $\mathbb{E}E^{\omega}\left[e^{i\theta\cdot\boldsymbol{U}_n+i\lambda\cdot\boldsymbol{V}_n}\right]$ and $E\left[e^{i\theta\cdot\boldsymbol{U}}\right]E\left[e^{i\lambda\cdot\boldsymbol{V}}\right]$ for arbitrary $\theta,\lambda\in\mathbb{R}^N$ vanishes when $n$ tends to $\infty$.}
 \end{remark} 
 
 Now we are ready to give the proof of Theorem \ref{final}.
 \begin{proof}[Proof of Theorem \ref{final}]
 We only need to show that for any $N$ space-time points $(t_1,r_1),\cdots,(t_N,r_N)$ in $\mathbb{R}^+\times\mathbb{R}$, $\left(z_n(t_1,r_1),\cdots,z_n(t_N,r_N)\right)$ converges weakly to $\left(z(t_1,r_1),\cdots,z(t_N,r_N)\right)$. According to the decomposition (\ref{decomp.z_n}), Lemma \ref{lemma5.2} and Theorem \ref{GaussianlimitThm}, $\left(n^{-1/4}U_n(t_1,r_1),\cdots,n^{-1/4}U_n(t_N,r_N)\right)$ depends only on $\omega$ and converges weakly to a random vector $\left(U(t_1,r_1),\cdots,U(t_N,r_N)\right)$. On the other hand, for $\mathbb{P}$-a.e. $\omega$, $\left(n^{-1/4}V_n(t_1,r_1),\cdots,n^{-1/4}V_n(t_N,r_N)\right)$ is Gaussian and thereby
 \begin{equation}
 E^{\omega}\left[e^{i\lambda\cdot n^{-1/4}V_n}\right] = \exp \{-\frac{1}{2}\lambda'\Sigma^{\omega}\lambda \},
 \end{equation}
 in which $\Sigma^{\omega}$ is the covariance matrix $\big(n^{-1/2}E^{\omega}\left[V_n(t_i,r_i)V_n(t_j,r_j) \right]\big)_{i,j}$ (see (\ref{covIomega}) for the expressions). By (\ref{limitcovI_n}), in the limit the matrix becomes $\Sigma=(\Gamma_0((t_i,r_i),(t_j,r_j))_{i,j}$, and therefore $\left(n^{-1/4}V_n(t_1,r_1),\cdots,n^{-1/4}V_n(t_N,r_N)\right)$ converges to a Gaussian vector $\left(V(t_1,r_1),\cdots,V(t_N,r_N)\right)$ satisfying condition (ii) in Lemma \ref{lemma5.4}. Hence there exists a mean zero Gaussian weak limit of $\left(z_n(t_1,r_1),\cdots,z_n(t_N,r_N)\right)$ in the form of Lemma \ref{lemma5.4}. From the covariances, the limit is $\left(z(t_1,r_1),\cdots,z(t_N,r_N)\right)$.
 \end{proof}
\end{section}

 \begin{appendices}
 \section{Quenched invariance principle for random walk in an i.i.d space-time random environment}
 The quenched invariance principle for one-dimensional random walk in an i.i.d space-time random environment has previously been established by Rassoul-Agha and Sepp\"al\"ainen in \cite{F.Rass.T.Sepp.2005}. They proved the result based on the view of the particle and martingale techniques. Now we apply the second moment method, similar to Section 3, to give an alternative proof of this result, since the method in this case is concise and self-contained. Actually, the second moment method was first used by Bolthausen and Sznitman in \cite{MR2023130}, and also by Comets and Yoshida in \cite{MR2271480}.
  
 In this basic model, an environment is a collection of transition probabilities $\omega=(\pi_{xy})_{x,y\in\mathbb{Z}}\in\Omega$ where $\Omega=\{(p_y)_{y\in\mathbb{Z}}\in[0,1]^{\mathbb{Z}}:\sum_{y}p_{y}=1\}^{\mathbb{Z}}$. The space $\Omega$ is equipped with the canonical product $\sigma$-algebra $\mathcal{F}$ and given an i.i.d probability measure $\mathbb{P}$. Here we say $\mathbb{P}$ is i.i.d in the sense that the distribution of the random probability vectors $(\pi_{xy})_{y\in\mathbb{Z}}$ are i.i.d over distinct sites $x$ and $\mathbb{P}$ is their product measure.
 
 Once the environment $\omega$ is chosen from the distribution $\mathbb{P}$, we fix it and sample a Markov process $X=(X_n)_{n\geq 0}$ with the state space $\mathbb{Z}$, starting from the site $z$, with the transition probability given by:
 \begin{eqnarray}
 \nonumber P^{\omega}_z(X_0=z)=1,
 \end{eqnarray}
 \begin{eqnarray}
 \nonumber P^{\omega}_z(X_{n+1}=y\big|X_n=x)=\pi_{xy}^{\omega}.
 \end{eqnarray}
 We call $X$ the one-dimensional random walk in an i.i.d space-time random environment. $P^{\omega}$ denotes the quenched law and we denote the averaged law by $P(\cdot):=\mathbb{E}P^{\omega}(\cdot)$. Under the averaged law, the averaged walk $X$ is just a random walk with transition probability $p(x,x+y)=p(0,y)=\mathbb{E}[\pi^{\omega}_{0y}]$. Let $\mu=\sum_{z}zp(0,z)$ and $\sigma^2=\sum_{z}z^2p(0,z)$ be the mean and the variance of the averaged walk. For $t\geq0$, define the linear interpolation of $X$ by $X_t:=X_{[n]}+(t-[t])(X_{[t]+1}-X_{[t]})$ where $[x]=\max\{n\in\mathbb{Z}:n\leq x\}$. Let $B_n(t)=\frac{X_{nt}-nt\mu}{\sqrt{n}\sigma}$ and $\tilde{B}_n(t)=\frac{X_{nt}-E^{\omega}[X_{nt}]}{\sqrt{n}\sigma}$ be random variables in $C[0,\infty)$, then  we have the following theorem:
 \begin{thm}
 With the notations introduced above, if $\sigma^2<\infty$ and $\mathbb{P}(\sup_{y\in\mathbb{Z}}\pi_{y}<1)>0$, then for $\mathbb{P}$-a.e. $\omega$, $B_n(t)$ converges weakly to a standard Brownian motion $B(t)$. Moreover, for $\mathbb{P}$-a.e. $\omega$, $n^{-1/2}\max_{k\leq n}\big|E^{\omega}X_{kt}-kt\mu\big|$ converges to $0$, and therefore the same a.s. invariance principle also holds for $\tilde{B}_n$.
 \end{thm}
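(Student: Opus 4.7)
The plan is to transport the second moment machinery of Section 3 from the continuum Howitt-Warren setting to the discrete space-time i.i.d.~environment. Under the averaged law $P$, the walk $X$ has i.i.d.~increments with mean $\mu$ and variance $\sigma^2 < \infty$, so classical Donsker's theorem gives $B_n \Rightarrow B$. As in Lemma \ref{3A} (the argument only uses existence of a countable determining class of bounded Lipschitz functions), quenched weak convergence will follow once we show that for every bounded Lipschitz $f$ on $C[0,T]$,
\begin{equation*}
E^{\omega}[f(B_n)] \longrightarrow E[f(B)] \qquad \text{$\mathbb{P}$-a.s.}
\end{equation*}
Let $X^1, X^2$ be two independent copies of the walk in the same environment $\omega$. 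Then under the averaged law $P$, $(X^1,X^2)$ is the discrete 2-point motion, and the key computation of Lemma \ref{3B} becomes
\begin{equation*}
\mathbb{E}\bigl[(E^{\omega}f(B_n) - Ef(B))^2\bigr] = E[f(B^1_n)f(B^2_n)] - E[f(B^1)f(B^2)].
\end{equation*}
Using Lipschitz continuity and coupling $(X^1,X^2)$ to genuinely independent walks $(\tilde X^1,\tilde X^2)$ that agree with $(X^1,X^2)$ except at coincidence steps, the right-hand side is bounded by a constant times $n^{-1/2}\,\mathbb{E}[L_{nT}]^{1/2}$, where $L_n := \sum_{k=0}^{n-1}\mathbf{1}_{\{X^1_k=X^2_k\}}$ is the discrete collision local time.

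The central estimate I would need is $\mathbb{E}[L_n] = O(n^{1/2})$, the discrete analogue of Lemma \ref{lem-guji1}. This is where the non-degeneracy hypothesis $\mathbb{P}(\sup_y \pi_y < 1) > 0$ enters: it guarantees a positive conditional probability, at each coincidence step, that the two walks separate within one step. Consequently one can dominate the collision count by the number of returns to $0$ of an auxiliary centered random walk with bounded increments (with possibly geometrically distributed sojourns at the origin). A classical local central limit theorem or Kemperman-type bound for such a walk then gives the $O(n^{1/2})$ estimate. This is the main obstacle, since, unlike in Section 2, we do not have an SDE or an explicit sticky representation; the argument instead relies on a Lyapunov-type decoupling estimate driven entirely by the non-degeneracy condition.

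Granting this estimate, one obtains $\mathbb{E}\bigl[(E^{\omega}f(B_n) - Ef(B))^2\bigr] = O(n^{-1/4})$. Borel-Cantelli along the subsequence $k_n = n^5$ (as in the proof of Theorem \ref{qIP}) produces $\mathbb{P}$-a.s.~convergence along this subsequence; the oscillation control $\max_{n^5\le m<(n+1)^5}|E^{\omega}f(B_m)-E^{\omega}f(B_{n^5})|\to 0$ then follows verbatim from the estimates in \eqref{3C} and \eqref{3D}, provided $X$ has enough finite moments to bound $E[\sup_{0\le s\le T}|B_m(s)|^6]$; this is a mild additional integrability condition that can be finessed via truncation using only $\sigma^2<\infty$. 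The resulting $\mathbb{P}$-a.s.~convergence $E^{\omega}[f(B_n)] \to E[f(B)]$ completes the first claim.

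For the second claim, set $Z_n := n^{-1/2}\max_{k\le nt}|E^{\omega}X_k - k\mu|$. Applying Doob's maximal inequality to the $P^{\omega}$-martingale $E^{\omega}X_k - k\mu$ and expressing its variance via the 2-point motion yields
\begin{equation*}
\mathbb{E}[Z_n^2] \le 4n^{-1}\,\mathrm{Cov}(X^1_{nt}, X^2_{nt}) = 4n^{-1}\sigma^2\,\mathbb{E}[L_{nt}] = O(n^{-1/2}),
\end{equation*}
since the covariance of the 2-point motion at time $n$ equals $\sigma^2\mathbb{E}[L_n]$ by the standard expansion of $E[(X^1_n-n\mu)(X^2_n-n\mu)]$. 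Borel-Cantelli along $k_n = n^3$ together with the analogous oscillation estimate then gives $Z_n\to 0$ for $\mathbb{P}$-a.e.~$\omega$, and the quenched invariance principle for $\tilde B_n$ follows immediately from that of $B_n$.
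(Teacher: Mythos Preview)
Your proposal is correct and follows essentially the same route as the paper's Appendix~A: couple the quenched two-point motion to two genuinely independent walks by resampling the increments at coincidence times, control the difference through the collision count $\sum_{k}P(X^1_k=X^2_k)=O(n^{1/2})$ (the paper cites an external lemma for this; your LCLT/non-degeneracy sketch is equally acceptable), and then run the subsequence Borel--Cantelli plus oscillation argument of Section~3. Two small corrections: the covariance constant is $\sigma_0^2=\sum_{y_1,y_2}(y_1-\mu)(y_2-\mu)\mathbb{E}[\pi_{0y_1}\pi_{0y_2}]$ rather than $\sigma^2$, and $E^{\omega}X_k-k\mu$ is a $\mathbb{P}$-martingale in the environmental filtration, not a $P^{\omega}$-martingale.
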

 \begin{proof}
 The argument is similar to the one for Theorem \ref{qIP}. First we choose a proper coupling.
 Given the environment $\omega$, sample two independent random walk $(X^1_n)_{n\geq0}$ and $(X^2_n)_{n\geq0}$, and assume $X^1_0=X^2_0=0$ without loss of generality. Then under the averaged law $P$, $(X^1,X^2)$ is a Markov process with transition probability:
 \begin{eqnarray}
 \nonumber P\Big((X^1_{n+1},X^2_{n+1})=(y_1,y_2)\big|(X^1_{n},X^2_{n})=(x_1,x_2)\Big)=p(x_1,y_1)p(x_2,y_2), &~~~x_1\neq x_2; \\
 \nonumber P\Big((X^1_{n+1},X^2_{n+1})=(y_1,y_2)\big|(X^1_{n},X^2_{n})=(x,x)\Big)=\mathbb{E}[\pi^{\omega}_{xy_1}\pi^{\omega}_{xy_2}].&
 \end{eqnarray}
 Define a sequence of stopping times:
 \begin{eqnarray}
 \nonumber &\tau_0:=0; \\
 \nonumber &\tau_{n+1}:=\min\{k>\tau_{n}:X^1_k=X^2_k\}~~~(n\geq0).
 \end{eqnarray}
 and two sequences of i.i.d random variables in the same probability space $(\xi^1_n)_{n\geq0}$ and $(\xi^2_n)_{n\geq0}$, which are independent of $(X^1,X^2)$ and each other, with
 \begin{equation*}
 P(\xi^1_0=z)=P(\xi^2_0=z)=p(0,z)~~~(z\in\mathbb{Z}). 
 \end{equation*}
 Now couple a Markov process $(Y^1,Y^2)$ as follows:
 \begin{eqnarray}
 \nonumber (Y^1_0,Y^2_0)=(0,0),
 \end{eqnarray}
 for $\tau_k<n\leq\tau_{k+1}$,
 \begin{eqnarray}
 \nonumber  Y^j_n=X^j_n-X^j_{\tau_k+1}+\sum_{i=1}^{k}(X^j_{\tau_i}-X^j_{\tau_{i-1}+1}) + \sum_{i=0}^{k} \xi^j_{\tau_i} ~~~(j=1,2).
 \end{eqnarray}
 In words, the way $(Y^1_n,Y^2_n)$ jumping is exactly the same as that of $(X^1_n,X^2_n)$ when $X^1_n\neq X^2_n$. If $X^1_n= X^2_n=x_n$, then $Y^1_n$ and $Y^2_n$ still jump independently with transition probability $p(x_n,\cdot)$. Therefore, $Y^1$ and $Y^2$ are two independent random walks with transition kernels given by the function $p$. We also do linear interpolation to $(X^j_n)_{n\geq 0}$ and $(Y^j_n)_{n\geq 0}$ for $j=1,2$, and denote them by $(X^j_t)_{t\geq 0}$ and $(Y^j_t)_{t\geq 0}$.
 
 Now consider a bounded Lipschitz function $f:C[0,T]\rightarrow\mathbb{R}$ for any fixed $T>0$, where $C[0,T]$ is equipped with the sup norm. Denote $X^j_{k+1}-X^j_k$ by $\Delta X^j_k$, then
 \begin{eqnarray}
 \nonumber  E\Big[\sup_{0\leq t\leq T}|X^j_{nt}-Y^j_{nt}|\Big]  &\leq& \sum_{k=0}^{[nT]+1}E\left[1_{\{X^1_k=X^2_k\}}\big|\Delta X^j_k-\xi^j_k\big|\right] \\
 &\leq& 2\sigma\sum_{k=0}^{[nT]+1}P(X^1_k=X^2_k) 
 ~=~ O(n^{1/2}), 
 \end{eqnarray}
 where in the last inequality we applied the independence of $(X^1_k,X^2_k)$ and $(\Delta X^j_k,\xi^j_k)$, and the inequality $E\big|\Delta X^j_k-\xi^j_k\big|\leq\left(E\big[(\Delta X^j_k-\xi^j_k)^2\big]\right)^{1/2}\leq 2\sigma$; the last estimate is due to \cite[Lemma 3.3]{MR1624854}. Denote $\frac{X^j_{nt}-nt\mu}{\sqrt{n}\sigma}$ and $\frac{Y^j_{nt}-\mu}{\sqrt{n}\sigma}$ (j=1,2) by $B^j_n(t)$ and $W^j_n(t)$, then with the same argument as in the proof of Lemma \ref{3B} we have
 \begin{equation}
 \mathbb{E}\left[\left(E^{\omega}f(B^j_n)-Ef(W^j_n\right)^2\right]\leq Cn^{-1/4},
 \end{equation}
 Hence following the proof of theorem \ref{qIP} gives us that for $\mathbb{P}$-a.e. $\omega$, $E^{\omega}f(B_n)-Ef(W^1_n)\rightarrow 0$. Moreover, by Donsker's theorem $Ef(W^1_n(t))\rightarrow Ef(B(t))$, so for $\mathbb{P}$-a.e. $\omega$, $B_n(t)$ converges weakly to $B(t)$.
 
 As to the second part of the theorem, let $\sigma_0^2=\sum_{y_1,y_2\in\mathbb{Z}}(y_1-\mu)(y_2-\mu)\mathbb{E}[\pi_{0y_1}\pi_{0y_2}]$, then
 \begin{eqnarray}
 \nonumber\mathbb{E}\left[\left(E^{\omega}X_{[nt]}-[nt]\mu\right)^2\right]&=&
 E\left[\left(X^1_{[nt]}-[nt]\mu\right)\left(X^2_{[nt]}-[nt]\mu\right)\right]\\
 \nonumber &=& \sum_{k=0}^{[nt]-1}\sum_{l=0}^{[nt]-1}E\left[\left(X^1_{k+1}-X^1_{k}-\mu\right)\left(X^2_{l+1}-X^2_{l}-\mu\right)\right]\\
 \nonumber &=&\sigma^2_0\sum_{k=0}^{[nt]-1}P(X^1_k=X^2_k) 
 ~=~ O(n^{1/2}),
 \end{eqnarray}
 where in the second line the summands vanish unless $k=j$ and $X^1_k=X^2_l$ because $X^1_n$ and $X^2_n$ are independent when they do not meet. Again, with the same argument as in the proof of the Theorem \ref{qIP}, we have $n^{-1/2}\max_{k\leq n}\big|E^{\omega}X_{kt}-kt\mu\big|$ converges to $0$ for $\mathbb{P}$-a.e. $\omega$.
 \end{proof}

 \section{An integral identity}
 This section gives a probabilistic method to show the identity in Lemma \ref{variance} (iii).
 \begin{lem} For all $(t,x)\in\mathbb{R}^+\times\mathbb{R}$,
 \begin{equation}
 \int_{0}^{t}\frac{\sqrt{t-s}}{\pi s^{3/2}}|x|e^{-\frac{x^2}{2s}}ds=\int_{0}^{t}\frac{1}{\sqrt{2\pi s}}e^{-\frac{x^2}{2s}}ds.
 \end{equation}
 Therefore the statement of Lemma \ref{variance} (iii) holds.
 \end{lem}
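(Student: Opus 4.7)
The plan is to interpret both sides of the integral identity as expressions for the expected local time at the origin of a standard Brownian motion starting from $x$, and to bridge them via the strong Markov property applied at the first hitting time of zero.

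Let $B = (B_s)_{s \geq 0}$ be a standard Brownian motion (variance one per unit time) starting at $x$, let $\tau := \inf\{s \geq 0 : B_s = 0\}$, and let $L_t^0$ denote the local time of $B$ at $0$ normalized so that $\int_0^t f(B_s)\,ds = \int_{\mathbb{R}} f(y)\, L_t^y\, dy$. Two standard facts do most of the work. First, the density of $\tau$ is $f_\tau(s) = |x|\,(2\pi)^{-1/2} s^{-3/2} e^{-x^2/(2s)}$, so a direct computation rewrites the left-hand side of the identity as
\begin{equation*}
\int_0^t \frac{\sqrt{t-s}}{\pi s^{3/2}}|x|\,e^{-x^2/(2s)}\,ds \;=\; \int_0^t \sqrt{\tfrac{2(t-s)}{\pi}}\,f_\tau(s)\,ds \;=\; E^x\!\left[\sqrt{\tfrac{2(t-\tau)}{\pi}}\,\mathbf{1}_{\{\tau<t\}}\right].
\end{equation*}
Second, the occupation density formula together with Fubini and $p_s(0,x) = (2\pi s)^{-1/2} e^{-x^2/(2s)}$ gives $E^x[L_t^0] = \int_0^t p_s(0,x)\,ds = \int_0^t (2\pi s)^{-1/2} e^{-x^2/(2s)}\,ds$, which is exactly the right-hand side of the identity.

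It therefore suffices to prove $E^x[L_t^0] = E^x\!\left[\sqrt{2(t-\tau)/\pi}\,\mathbf{1}_{\{\tau<t\}}\right]$. For this I invoke the strong Markov property at $\tau$: on the event $\{\tau < t\}$ the shifted process $\tilde B := (B_{\tau+u})_{u \geq 0}$ is a standard Brownian motion started at $0$, independent of $\mathcal{F}_\tau$; since $B$ does not visit $0$ before $\tau$, its local time accumulates only after $\tau$, so $L_t^0(B) = L_{t-\tau}^0(\tilde B)\cdot\mathbf{1}_{\{\tau<t\}}$. Taking expectations and using the tower property together with the elementary computation $E^0[L_u^0] = \int_0^u (2\pi r)^{-1/2}\,dr = \sqrt{2u/\pi}$ closes the loop.

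To deduce Lemma \ref{variance}(iii), set $x := x_1 - x_2$ and substitute $u := x^2 s$ in the defining integral \eqref{5D} for $G(x,t)$ to obtain
\begin{equation*}
G(x,t) \;=\; \sqrt{2}\,\nu \int_0^{2t} \frac{|x|\sqrt{2t-u}}{\pi u^{3/2}}\, e^{-x^2/(2u)}\,du.
\end{equation*}
Applying the identity just proved, with $t$ replaced by $2t$, collapses this to $\sqrt{2}\,\nu\int_0^{2t} (2\pi u)^{-1/2}\, e^{-x^2/(2u)}\,du = \nu\int_0^{2t} (\pi u)^{-1/2}\, e^{-x^2/(2u)}\,du$, which is $\Gamma\big((t,x_1),(t,x_2)\big)$ by \eqref{CovFun}. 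The only place to be careful, rather than a real obstacle, is tracking normalizations: the local-time convention of Proposition \ref{localtime}(iv) is stated for Brownian motions of variance $\sigma^2 t$ and the sticky Brownian motion $\tilde W$ in Section 2 has variance $2t$, so the factors of $\sqrt{2}$ and the starting-point convention must be reconciled with the standard-BM convention adopted here. Once that bookkeeping is done, the argument is essentially a single application of the strong Markov property.
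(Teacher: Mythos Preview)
Your proof is correct and follows essentially the same approach as the paper: both compute the expected local time at the origin of a Brownian motion started from $x$ in two ways, once via the strong Markov property at the first hitting time of zero (using $E^0[L_u^0]=\sqrt{2u/\pi}$), and once via the occupation density formula. Your write-up is in fact slightly more detailed, since you spell out the substitution that identifies $G(x,t)$ with $\Gamma\big((t,x_1),(t,x_2)\big)$, which the paper leaves implicit.
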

 \begin{proof}
 To see the equality, we in turn consider the following probability question:
  For a standard Brownian motion $B_t$ starting from the point $x$, what is the expectation of the local time at the origin $\mathbb{E}[L_x(t,0)]$ up to time $t$. We compute this quantity in two ways.
  
  First we use the same method as in Section 2.2. Decompose it according to the first hitting time to $0$ of $B_t$. Since by Lemma \ref{lem-guji1} we have $\mathbb{E}[L_0(t,0)]=\sqrt{\frac{t}{2\pi}}$,
  \begin{eqnarray}\label{B2}
  \mathbb{E}[L_x(t,0)] = \int_{0}^{t} \mathbb{E}[L_0(t-s,0)] \frac{|x|}{\sqrt{2\pi}s^{3/2}}e^{-\frac{x^2}{2s}}ds= \int_{0}^{t}\frac{\sqrt{t-s}}{2\pi s^{3/2
   }}|x|e^{-\frac{x^2}{2s}}ds.
  \end{eqnarray}
  
  On the other hand, by Proposition \ref{localtime} (iv), we have
  \begin{equation}
  2\mathbb{E}\int_{-\infty}^{\infty}f(y)L_x(t,y)dy=\mathbb{E}\int_{0}^{t}f(B_s)ds=\int_{-\infty}^{\infty}\int_{0}^{t}f(y)\frac{1}{\sqrt{2\pi s}}e^{-\frac{(y-x)^2}{2s}}dsdy,
  \end{equation}
  for all measurable functions $f:\mathbb{R}\rightarrow [0,\infty)$. Therefore, 
  \begin{equation}\label{B4}
  \mathbb{E}[L_x(t,0)]=\int_{0}^{t}\frac{1}{2\sqrt{2\pi s}}e^{-\frac{x^2}{2s}}ds
  \end{equation}
  
  From (\ref{B2}) and (\ref{B4}) we get the result.
 \end{proof}
 
 \end{appendices}
 \section*{Acknowledgement}I am deeply indebted to my advisor Associate Professor Rongfeng Sun, who introduced me to this topic, and read carefully an earlier version of this paper and provided many corrections and suggestions. I also thank the referee who found many typos and provided useful suggestions.

\nocite{*}
\bibliographystyle{alpha}
\bibliography{reference}
\end{document}